\theoremstyle{thmstyleone}%
\newtheorem{theorem}{Theorem}[section]
\newtheorem{proposition}[theorem]{Proposition}%
\theoremstyle{thmstyletwo}%
\newtheorem{example}[theorem]{Example}%
\newtheorem{remark}[theorem]{Remark}%
\theoremstyle{thmstylethree}%
\newtheorem{definition}[theorem]{Definition}%
\newtheorem{mtheorem}{Theorem}
\newtheorem{defrem}[theorem]{Definition and Remark}
\newtheorem{corollary}[theorem]{Corollary}
\newtheorem{lemma}[theorem]{Lemma}
\newcommand{\R}{\mathbb{R}}
\newcommand{\Z}{\mathbb{Z}}
\newcommand{\N}{\mathbb{N}}
\newcommand{\Q}{\mathbb{Q}}
\newcommand{\qf}[1]{\langle #1\rangle}
\newcommand{\Gcal}{\mathcal{G}}
\newcommand{\Gcalred}{\mathcal{G}^{\text{red}}}
\newcommand{\red}[1]{#1^{\text{red}}}
\DeclareMathOperator{\cha}{char }
\newcommand{\aut}{\operatorname{aut}}
\newcommand{\sym}{\operatorname{sym}}
\newcommand{\og}{\operatorname{O}}
\newcommand{\iso}{\operatorname{iso}}
\newcommand{\spn}{\operatorname{span}}
\begin{document}

\title[Cliques in Representation Graphs of Quadratic Forms]{Cliques in Representation Graphs of Quadratic Forms}


\author*[1,2]{\fnm{Nico} \sur{Lorenz}}\email{nico.lorenz@ruhr-uni-bochum.de}

\author[2,3]{\fnm{Marc Christian} \sur{Zimmermann}}\email{marc.christian.zimmermann@gmail.com}
\equalcont{These authors contributed equally to this work.}

\affil*[1]{\orgdiv{Fakult\"at f\"ur Mathematik}, \orgname{Ruhr-Universit\"at Bochum}, \orgaddress{\street{Universit\"atsstra{\ss}e 150}, \city{Bochum}, \postcode{44780}, \state{North Rhine-Westphalia}, \country{Germany}}}

\affil[2]{\orgdiv{Department Mathematik/Informatik, Abteilung Mathematik}, \orgname{Universit\"at zu K\"oln}, \orgaddress{\street{Weyertal~86--90}, \city{K\"oln}, \postcode{50931}, \state{North Rhine-Westphalia}, \country{Germany}}}


\abstract{We study cliques in graphs arising from quadratic forms where the vertices are the elements of the module of the quadratic form and two vertices are adjacent if their difference represents some fixed scalar. 
    We determine structural properties and the clique number for quadratic forms over finite rings. We further extend previous results about graphs arising from such forms and forms over fields of characteristic 0 in a unified framework.}

\keywords{Cliques, Graphs, Quadratic forms, Finite Fields, Finite Local Rings}

\pacs[MSC Classification]{05C69, 11E04}

\maketitle

\section{Introduction}

\subsection{History}

The main object of interest in this paper are graphs arising from quadratic forms by taking the elements of the underlying module of the quadratic form $q$ as vertices and including an edge between module elements $x$ and $y$ if and only if the difference $x-y$ represents some fixed scalar $a$, i.e. we have $q(x-y) = a$.

Graphs of this kinds have appeared as distance graphs of quadratic forms over finite fields to generalize distance graphs in standard euclidean space, on the sphere, or in hyperbolic space.

There the spectrum of such graphs has been studied. To our knowledge this started with the sums-of-squares form over finite fields, where the spectrum has been computed in \cite{medrano_myers_stark_terras_1996}. 
Now the sum-of-squares form is a particular choice of a regular quadratic form over a finite field, and as such not unique: in every fixed dimension there are precisely two such forms if the characteristic of the field is not $2$ and there are two such in every even dimension if the characteristic is $2$. 
Extensions of this work include a generalization to all such forms \cite{bannai_shimabukuro_tanaka_2009} and some finite rings instead of fields \cite{medrano:1998:feg}.

These results can, for example, be used to derive lower bounds on the chromatic number of the associated graphs using the Lovasz theta number, or the Hoffman bound, in complete analogy to the case of geometric distance graphs.

We are interested in the related question of determining the clique number of a graph. While this also gives a lower bound on the chromatic number, as all elements of a clique have to be colored differently, we find this problem to be interesting in its own right. Recently Krebs (cf. \cite{Krebs_2022} computed the clique number for the sum-of-squares form over finite fields, building on results of Chilakamarri (cf. \cite{Chilakamarri_1988}) for the sum-of-squares form over the rationals.

\subsection{Statement of the problem}

We now give a precise statement of the problem.
Let $A$ be a ring and $V$ be an $A$-module. 
A \emph{quadratic form} on $V$ is a map $q:V\to A$ such that
    \begin{enumerate}
        \item[(QF1)] For all $a\in A$ and $x\in V$ we have $q(ax)=a^2q(x)$
        \item[(QF2)] The map $b_q:V\times V\to A, (x,y)\mapsto q(x+y)-q(x)-q(y)$ is bilinear.
    \end{enumerate}
In particular we have $b_q(x,x)=2q(x)$.
We will only consider \emph{non-degenerate} quadratic forms, i.e. we have 
$$\{x\in V\mid b(x,y)=0\text{ for all } y\in V\}=\{0\}.$$
Given a quadratic form, we can define the following graph which is be the main object of interest in this paper.
\begin{definition}
	Let $q$ be a quadratic form on an $A$-module $V$ and $a\in A$. The \emph{representation graph} $\mathcal G_{q, a}=(V, E)$ is defined by its set of edges
	$$E:=\left\{\{x,y\} \in \binom{V}{2}\mid q(x-y) = a\right\}.$$
\end{definition}
As noted above these graphs have been considered by numerous authors, sometimes under the name of finite euclidean graphs or, if $a =1$ and $q$ is the sum-of-squares form, as unit-quadrance graphs. 

We are mostly interested in the size of a largest \emph{clique} in $\Gcal_{q,a}$, that is a set of vertices where any pair is adjacent which is of largest cardinality among all such sets. We denote the cardinality of a maximum cliqe by $\omega(\Gcal_{q,a})$. Concretely we are asking for the maximum number of elements in $V$ such that each pairwise difference represents $a$ under $q$. In addition we are interested in the number of such maximum cliques, which we denote by $\omega_{\max}(\Gcal_{q,a})$.

\subsection{Contribution and results}

For the sake of readability of this paper and to make our explicit results easily accessible we collect the statements of the main results at this time. 
Notation that is necessary to understand the statement is referred to.

Firstly, we computed the clique number of representation graphs associated to units in various cases generalizing Theorems 1.2 and 1.3 in \cite{Krebs_2022}.

For local rings of odd characteristic the value of $\omega(\Gcal_{q,a})$ depends on whether $q$ is isometric to a specific quadratic form $\gamma_{a,n}$, which we refer to as ``test form'', (for a definition see Definition and Remark~\ref{MaxCliqueForm}) and the characteristic of the residue field of $A$. 

For finite fields of characteristic $2$ it depends, again, on isometry with the test form $\gamma_{a,n}$ and, now, on $n$ being even or odd.

\begin{mtheorem}[Clique numbers]\label{Intro_thm:cliquenumbers}\phantom{a}

    \begin{enumerate}[label=(\alph*)]
        \item Let $A$ be a finite local ring whose residue field $K$ has odd characteristic and $q$ be an $n$-dimensional quadratic form over $A$ and $a\in A^\ast$. 

        Then $\omega(\Gcal_{q,a})$ is as depicted in Table~\ref{tbl:cliquenumbers:char:odd}.

        \item Let $A$ be a finite field of characteristic 2 and $q$ be an $n$-dimensional quadratic form over $A$ and $a\in A^\ast$. 

        Then $\omega(\Gcal_{q,a})$ is as depicted in Table~\ref{tbl:cliquenumbers:char:2}.
    \end{enumerate}
\end{mtheorem}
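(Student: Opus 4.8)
The plan is to reduce the clique problem to a representation problem for quadratic forms and then to resolve that problem via the classification of forms over the residue field. Since $\Gcal_{q,a}$ is a Cayley graph on the additive group of $V$, every clique may be translated so that it contains $0$; a clique of size $m$ then corresponds to vectors $w_1,\dots,w_{m-1}\in V$ with $q(w_i)=a$ for all $i$ and $q(w_i-w_j)=a$ for $i\neq j$. Expanding $q(w_i-w_j)=q(w_i)+q(w_j)-b_q(w_i,w_j)$ and using $b_q(w_i,w_i)=2q(w_i)$ shows, in odd residue characteristic, that this is equivalent to prescribing the Gram matrix $\left(b_q(w_i,w_j)\right)_{i,j}=a\,(I+J)$, where $J$ is the all-ones matrix. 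The quadratic form carried by this Gram matrix is, by construction, the test form $\gamma_{a,m-1}$ (see Definition and Remark~\ref{MaxCliqueForm}); its determinant is $a^{m-1}m$ up to squares, since $\det(I+J_r)=r+1$, so $\gamma_{a,r}$ is non-degenerate exactly when $\cha K\nmid r+1$. Thus $\omega(\Gcal_{q,a})=1+r_{\max}$, where $r_{\max}$ is the largest $r$ for which $V$ contains $r$ vectors with Gram matrix $a(I+J_r)$.

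The next step is to translate the existence of such a configuration into an embedding statement. When $a(I+J_r)$ is non-degenerate, i.e. $\cha K\nmid r+1$, any such $w_1,\dots,w_r$ are automatically linearly independent and span a non-degenerate subspace isometric to $\gamma_{a,r}$; hence the configuration exists if and only if $\gamma_{a,r}$ is represented by $q$ as a subform. Over a finite field every form of dimension at most $n-1$ embeds into the $n$-dimensional $q$, whereas a full-dimensional embedding $r=n$ forces the isometry $q\cong\gamma_{a,n}$, which is the origin of the dichotomy on $q\cong\gamma_{a,n}$ in the table. When $\cha K\mid r+1$ the Gram matrix is degenerate of rank $r-1$ with radical spanned by $(1,\dots,1)$; such a configuration is realized by choosing $w_1,\dots,w_r$ subject to the single relation $w_1+\dots+w_r=0$ so that they span a non-degenerate $(r-1)$-dimensional subspace isometric to the non-degenerate part of $\gamma_{a,r}$. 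Since $I+J_r$ has rank drop at most one, this degenerate realization can push $r$ up to $n+1$, and hence $\omega$ up to $n+2$, precisely under the relevant divisibility of $n+1$ or $n+2$ by $\cha K$; this is what makes $\cha K$ appear in Table~\ref{tbl:cliquenumbers:char:odd}.

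To carry this out over a finite local ring $A$ rather than a field, I would reduce modulo the maximal ideal: a non-degenerate quadratic form over $A$ is controlled by its reduction over the residue field $K$ together with the structure theory of forms over finite local rings (diagonalizability in odd residue characteristic, and the lifting of representations from $K$ to $A$ by a Hensel-type argument). This reduces every embedding question for $\gamma_{a,r}$ and $q$ to the corresponding question over $K$, where classification by dimension and discriminant applies; comparing $\det\gamma_{a,r}=a^r(r+1)$ with $\det q$ then pins down $r_{\max}$ in each case and reproduces the rows of the table.

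In characteristic $2$ the polar form $b_q$ is alternating, so $b_q(w_i,w_i)=0$ and the clique condition becomes $q(w_i)=a$ together with $b_q(w_i,w_j)=a$ for $i\neq j$; here one must track the quadratic form itself, via its Arf invariant and its quasilinear radical, rather than only a symmetric Gram matrix. Because non-degenerate quadratic forms in characteristic $2$ exist only in even dimension, the analysis splits according to the parity of $n$, which is the source of the even/odd dichotomy in Table~\ref{tbl:cliquenumbers:char:2}. I expect the main obstacle to be exactly the degenerate boundary cases: deciding, when $a(I+J_r)$ drops rank, whether the forced linear dependence among the $w_i$ can be realized inside $q$, and in characteristic $2$ identifying the Arf invariant of the resulting subform. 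This is where the precise value in each row is won or lost, and where the interplay between $q\cong\gamma_{a,n}$, the residue characteristic, and the parity of $n$ becomes decisive.
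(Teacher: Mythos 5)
Your overall route is the same as the paper's: translate a clique so that it contains $0$, observe that the remaining $r$ vectors must have polar Gram matrix $a(I_r+J_r)$ (and, in characteristic $2$, additionally satisfy $q(w_i)=a$), identify this configuration with the test form $\gamma_{a,r}$, and then maximize $r$ using the classification of forms over the residue field, with the degenerate case $\cha K\mid r+1$ handled by the relation $w_1+\dots+w_r=0$. For odd residue characteristic your sketch, carried out carefully, does reproduce Table~\ref{tbl:cliquenumbers:char:odd}: the paper's Lemma~\ref{LemUpperBoundClique} is exactly the verification that the vector $-\sum_i w_i$ completes a clique precisely when $\cha K\mid r+1$, and Corollary~\ref{FiniteFieldQ_n-1AsNonRegularSubform} (together with Corollary~\ref{FiniteLocalRingQ_n-1AsNonRegularSubform} for the lift from $K$ to $A$) supplies the determinant argument behind your remark that a full-dimensional or codimension-one configuration pins down the isometry class of $q$.

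Two points are genuine gaps rather than omitted routine work. First, a degenerate Gram matrix does not ``force'' linear dependence: $r$ vectors with Gram matrix $a(I_r+J_r)$ of rank $r-1$ may well be linearly independent, spanning a subspace on which $b_q$ degenerates even though $q$ does not (the paper's notion of domination). In odd characteristic this alternative never yields a larger clique than the dependent realization, so your argument survives; but in characteristic $2$ with $n\equiv 2\bmod 4$ it is exactly the realization one must \emph{exclude} to obtain the upper bound $\omega(\Gcal_{q,a})=n-1$ in case A of Table~\ref{tbl:cliquenumbers:char:2}. The paper does this in Lemma~\ref{lem:q_n-1AsSubformChar2}, showing by a change of basis and the Arf computation of Lemma~\ref{lem:ArfQ_an} that domination of $\gamma_{a,n-1}$ with $n\equiv 2\bmod 4$ already forces $q\cong\gamma_{a,n}$; you name this as ``the main obstacle'' but do not resolve it, and without it that row of the table is unproved. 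Second, the dichotomy in Table~\ref{tbl:cliquenumbers:char:2} is not the parity of $n$ --- non-degeneracy already forces $n$ even --- but the residue of $n$ modulo $4$, which enters through $q(w_1+\dots+w_m)=a\binom{m+1}{2}$ (Lemma~\ref{lem:LinCombOfCliqueInRed}) and through the Arf invariant of $\gamma_{a,n}$; the stated ``source of the even/odd dichotomy'' is therefore misattributed, even though you correctly point to the Arf invariant as the relevant tool.
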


The proof of the theorem is given by Theorems~\ref{CliqueNumbersTheorem}, \ref{CliqueNumbersTheoremChar2}.

\begin{table}[!ht]
        \begin{tabular}{c|c|c|c}
            Case & isometry condition & characteristic condition &  $\omega(\Gcal_{q,a})$\\ \hline
            A & $q\not\cong \gamma_{a,n}$ & $\cha K\mid n$ & $n$ \\ \hline
            B & $q\not\cong \gamma_{a,n}$ & $\cha K\nmid n, n+1$ & $n$ \\ \hline
            C & $q\not\cong \gamma_{a,n}$ & $\cha K\mid n+1$ & $n+1$ \\ \hline
            D & $q\cong \gamma_{a,n}$ & $\cha K\nmid n+2$ & $n+1$ \\ \hline
            E & $q\cong \gamma_{a,n}$ & $\cha K\mid n+2$ & $n+2$
        \end{tabular}
    \caption{Clique numbers for finite local rings of odd characteristic.}
    \label{tbl:cliquenumbers:char:odd}
\end{table}

\begin{table}[!ht]
        \begin{tabular}{c|c|c|c}
            Case & isometry condition & dimension condition &  $\omega(\Gcal_{q,a})$\\ \hline
            A & $q\not\cong \gamma_{a,n}$ & $n\equiv 2\mod 4$ & $n-1$ \\ \hline
            B & $q\not\cong \gamma_{a,n}$ & $n\equiv 0\mod 4$ & $n$ \\ \hline
            C & $q\cong \gamma_{a,n}$ & $n\equiv 0\mod 4$ & $n+1$ \\ \hline
            D & $q\cong \gamma_{a,n}$ & $n\equiv 2\mod 4$ & $n+2$
        \end{tabular}
    \caption{Clique numbers for finite fields of characteristic $2$.}
    \label{tbl:cliquenumbers:char:2}
\end{table}

\newpage

Secondly, we computed the number of maximum cliques in various cases, this is done by an orbit-stabilizer computation with respect to the group of ``affine isometries'' $\iso(q) = F^n \rtimes \og(q)$ (see \eqref{eq:affine:isometries}).

\begin{mtheorem}[Number of maximum Cliques]\label{Intro_NumberOfCliques}
    Let $F$ be a finite field with $f$ elements, $q$ be an $n$-dimensional quadratic form over $F$ and $a\in F^*$. 
    We use the notation of Corollary~\ref{cor:SizeIso} and Theorem~\ref{NumberOfCliques}.
    Let $\alpha$ be defined by
    \begin{align*}
        \alpha=\begin{cases}
            2f-2,&\text{ if }q\perp \gamma_{a,n-2}\text{ is hyperbolic},\\
            2f+2,&\text{ otherwise.}
        \end{cases}
    \end{align*}
    The number of maximum cliques $\omega_\text{max}(\Gcal_{q,a})$, is given in
    \begin{enumerate}[label=(a)]
        \item Table~\ref{tbl:numbersofcliques:char:odd} if $\cha F\neq 2$.
    \item Table~\ref{tbl:numbersofcliques:char:2} if $\cha F=2$ .
    \end{enumerate}
\end{mtheorem}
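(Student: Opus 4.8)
The plan is to obtain $\omega_{\max}(\Gcal_{q,a})$ from the orbit--stabilizer theorem applied to the action of the affine isometry group $\iso(q)=F^n\rtimes\og(q)$ on the set of maximum cliques, and then to specialise the resulting formula using the explicit group orders of Corollary~\ref{cor:SizeIso} and the count of Theorem~\ref{NumberOfCliques}. First I would record that $\iso(q)$ acts on $V=F^n$ by automorphisms of $\Gcal_{q,a}$: translations leave every difference $x-y$ unchanged and elements of $\og(q)$ preserve $q$, so in both cases the defining condition $q(x-y)=a$ is preserved and edges map to edges. Hence $\iso(q)$ permutes the maximum cliques, and once this action is shown to be transitive we get $\omega_{\max}(\Gcal_{q,a})=|\iso(q)|/|\mathrm{Stab}(C)|$ for any single maximum clique $C$.

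Transitivity is the first substantive step. Translating one vertex to the origin identifies a clique of size $k=\omega(\Gcal_{q,a})$ with a tuple $(w_1,\dots,w_{k-1})$ satisfying $q(w_i)=a$ and $b_q(w_i,w_j)=a$ for $i\neq j$; equivalently the Gram matrix of $(w_1,\dots,w_{k-1})$ with respect to $b_q$ equals $a(I+J)$ with $I$ the identity and $J$ the all-ones matrix, the same matrix for every clique. Thus any two maximum cliques span subspaces carrying isometric (in general degenerate) restrictions of $q$, and Witt's extension theorem furnishes an element of $\og(q)$ taking one configuration to the other; composing with a translation yields transitivity. In characteristic $2$ one must invoke the extension theorem for quadratic (rather than merely bilinear) forms, which is the reason that case is treated on its own.

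It remains to determine $|\mathrm{Stab}(C)|$. Sending an isometry in $\mathrm{Stab}(C)$ to the permutation it induces on the $k$ vertices gives a homomorphism to $\mathrm{Sym}(C)\cong S_k$; because the clique is a regular simplex every permutation is realised, contributing a factor $k!$. Its kernel is the subgroup of $\og(q)$ fixing the affine span of $C$ pointwise. When $\det(I+J)=k$ is nonzero in $F$, i.e. $\cha F\nmid k$, the span is nondegenerate and the kernel is the orthogonal group of its complement; when that complement is a plane its order is exactly $\alpha$, and a discriminant/Arf-invariant computation identifies its type, showing it is hyperbolic precisely when $q\perp\gamma_{a,n-2}$ is -- this is the source of the dichotomy $\alpha=2f-2$ versus $\alpha=2f+2$. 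Equivalently, $\alpha/2=f\mp1$ counts the vectors completing a given sub-simplex to a maximum clique, namely the solutions of $q(v)=a$, $b_q(v,w_i)=a$ lying on a split or non-split conic. Assembling $|\iso(q)|=f^n|\og(q)|$ from Corollary~\ref{cor:SizeIso} with these stabiliser orders, and separating according to the clique numbers of Theorem~\ref{Intro_thm:cliquenumbers}, yields entry by entry Tables~\ref{tbl:numbersofcliques:char:odd} and~\ref{tbl:numbersofcliques:char:2}.

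The step I expect to be hardest is the degenerate range $\cha F\mid k$, which coincides with the test-form cases of Theorem~\ref{Intro_thm:cliquenumbers} where $\omega$ exceeds the generic value. There $a(I+J)$ is singular, the vectors $w_i$ acquire a linear relation or span a subspace with nontrivial radical, and the pointwise stabiliser is no longer a plain orthogonal group but also contains unipotent (Eichler-type) elements moving the radical. Pinning down the dimension of this radical, the induced actions on it and on the nondegenerate quotient, and hence the exact stabiliser order, is the technical heart of the argument; the characteristic-$2$ subcases are the most delicate, since there $b_q$ is alternating and the quadratic refinement of Witt's theorem must be used throughout. These computations are exactly what Theorem~\ref{NumberOfCliques} packages, so that the present theorem reduces to substituting the group orders and reading off the tables.
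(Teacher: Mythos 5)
Your overall strategy is the same as the paper's: let $\iso(q)=F^n\rtimes\og(q)$ act on the set of maximum cliques, compute the orbit length by orbit--stabilizer, and identify the stabilizer of a clique $C$ containing $0$ as (permutations of the $k$ vertices) $\times$ (pointwise stabilizer of $\spn(C)$), the latter being the orthogonal group of the orthogonal complement, whose order is $1$, $2$ or $\alpha$ according to its dimension. The genuine gap is your transitivity claim. You argue that since every maximum clique, after translating a vertex to $0$, has the \emph{same} Gram matrix $a(I+J)$ for its tuple of nonzero vertices, any two maximum cliques have isometric spans and Witt extension does the rest. This inference fails: identical Gram matrices of the generating tuples do not force the spans to have the same dimension, and in case B of the characteristic-$2$ theorem (Theorem~\ref{CliqueNumbersTheoremChar2}, $n\equiv 0\bmod 4$, $q\not\cong\gamma_{a,n}$) there really are two types of maximum cliques: those whose $n-1$ nonzero vertices are linearly independent (coming from $q$ dominating the degenerate form $\gamma_{a,n-1}$) and those whose nonzero vertices span only an $(n-2)$-dimensional space (coming from the regular subform $\gamma_{a,n-2}$, with the extra vertex $-\sum x_i$). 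No affine isometry can map one type to the other, so the action has \emph{two} orbits there, and the count is a sum of two orbit lengths --- which is exactly why the entry for case B in Table~\ref{tbl:numbersofcliques:char:2} is $\frac{|\iso(q)|}{\alpha\cdot n!}+\frac{|\iso(q)|}{2\cdot n!}$. Your argument, taken at face value, would produce a single term and hence the wrong answer in that case; this is the content of Proposition~\ref{AllMaxCliquesInOneOrbit} and Remark~\ref{rem:TwoOrbits} in the paper, which you need and do not supply.

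A second, related problem is that your treatment of the stabilizer in the degenerate situation is circular: you write that the required computations ``are exactly what Theorem~\ref{NumberOfCliques} packages,'' but Theorem~\ref{NumberOfCliques} \emph{is} the statement being proved, so nothing may be deferred to it. The paper closes this by an explicit calculation: for a clique of the first type in case B (char $2$), it fixes one extending isometry $\varphi$ via Witt's theorem and shows that any other extension differs by $\sigma(w)=\lambda\varphi(v)+\varphi(w)$ with $\lambda(\lambda a+1)=0$, so the pointwise stabilizer of the span has order exactly $2$ and the full stabilizer is $2\cdot n!$. I would also note that your worry about radicals and unipotent elements in the cases $\cha F\mid k$ of odd characteristic is misplaced: there the span of a maximum clique is $\gamma_{a,k-2}$, which is regular (its determinant is $(k-1)a^{k-2}\neq 0$), so the pointwise stabilizer is an honest orthogonal group of the complement and no Eichler-type elements occur; the only genuinely degenerate span arises in the characteristic-$2$ case just described.
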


The proof of the theorem is given by Theorem~\ref{NumberOfCliques}.

{
\renewcommand{\arraystretch}{1.5}
\begin{table}[!ht]
        \begin{tabular}{c||c|c|c}
            case / isometry & $q$ hyperbolic & $dim(q)$ even, $q$ not hyperbolic & $\dim(q)$ odd\\ \hline\hline 
            A & 
            $\frac{|\iso(q)|}{\alpha\cdot n!}$ & 
            $\frac{|\iso(q)|}{\alpha\cdot n!}$ & 
            $\frac{|\iso(q)|}{\alpha \cdot n!}$\\ \hline
            B & 
            $\frac{|\iso(q)|}{2\cdot n!}$ & 
            $\frac{|\iso(q)|}{2 \cdot n!}$ & 
            $\frac{|\iso(q)|}{2 \cdot n!}$\\ \hline
            C & 
            $\frac{|\iso(q)|}{2(n+1)!}$ & 
            $\frac{|\iso(q)|}{2(n+1)!}$ & 
            $\frac{|\iso(q)|}{2(n+1)!}$\\ \hline
            D & 
            $\frac{|\iso(q)|}{(n+1)!}$ & 
            $\frac{|\iso(q)|}{(n+1)!}$ & 
            $\frac{|\iso(q)|}{(n+1)!}$\\\hline 
            E & 
            $\frac{|\iso(q)|}{(n+2)!}$ & 
            $\frac{|\iso(q)|}{(n+2)!}$ & 
            $\frac{|\iso(q)|}{(n+2)!}$
        \end{tabular}
    \caption{Number of maximum Cliques of $\Gcal_{q,a}$ for finite fields of characteristic different from $2$.}
    \label{tbl:numbersofcliques:char:odd}
\end{table}
}

{\renewcommand{\arraystretch}{1.5}
\begin{table}[!ht]
            \begin{tabular}{c||c|c}
                case / isometry & $q$ hyperbolic & $q$ not hyperbolic\\\hline\hline
                A & $\frac{|\iso(q)|}{\alpha(n-1)!}$ & $\frac{|\iso(q)|}{\alpha(n-1)!}$ \\\hline
                B & $\frac{|\iso(q)|}{\alpha\cdot n!} + \frac{|\iso(q)|}{2\cdot n!}$ & $\frac{|\iso(q)|}{\alpha\cdot n!} + \frac{|\iso(q)|}{2\cdot n!}$\\\hline 
                C & $\frac{|\iso(q)|}{(n+1)!}$ & 
            $\frac{|\iso(q)|}{(n+1)!}$ \\\hline
            D & $\frac{|\iso(q)|}{(n+2)!}$ & 
            $\frac{|\iso(q)|}{(n+2)!}$ 
            \end{tabular}
    \caption{Number of maximum Cliques of $\Gcal_{q,a}$ for finite fields of characteristic $2$.}
    \label{tbl:numbersofcliques:char:2}
\end{table}
}

Lastly, in good fashion of quadratic form theory, we find a local-global principle for clique numbers over $\Q$.
\begin{mtheorem}[Local-global principle for clique numbers over $\Q$]
    Let $q$ be a quadratic form over $\Q$ of dimension $n$ and $a\in \Q^*$. 
    We have
    \begin{align*}
        d:=&\max\{k\in\N_0\mid \gamma_{k, a}\subseteq q\}\\
        =&\max\{k\in\N_0\mid \min\{i_W((q\perp -\gamma_{k,q})_p)\mid p\text{ is a prime or }\infty\}\geq k\}.
    \end{align*}
    In particular, we have $\omega(\Gcal_{q, a})=d+1$.
\end{mtheorem}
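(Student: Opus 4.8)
The equality $\omega(\Gcal_{q,a})=d+1$ is the clique--subform correspondence underlying the whole paper, so I only have to establish the displayed identity. Since $x\mapsto q(x-y)$ depends only on the difference, translation is a graph automorphism, and I may assume a clique contains $0$. Its remaining $k$ vertices are vectors $v_1,\dots,v_k$ with $q(v_i)=a$ and, from $a=q(v_i-v_j)=q(v_i)+q(v_j)-b_q(v_i,v_j)$, with $b_q(v_i,v_j)=a$ for $i\neq j$; together with $b_q(v_i,v_i)=2a$ this says their Gram matrix with respect to $b_q$ is exactly that of $\gamma_{a,k}$. As $\det(I_k+J_k)=k+1\neq 0$ in characteristic $0$, the form $\gamma_{a,k}$ is non-degenerate over $\Q$, so the $v_i$ are linearly independent and span an isometric copy of $\gamma_{a,k}$; conversely any isometric embedding of $\gamma_{a,k}$ into $q$ produces a clique of size $k+1$. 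Hence $\omega(\Gcal_{q,a})=1+\max\{k\mid \gamma_{a,k}\subseteq q\}=d+1$, and it remains to prove $d=d'$, where I write $d'$ for the right-hand maximum.

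\textbf{Local criterion.}
For each fixed $k$ I apply, over every completion, the classical subform criterion: over a field of characteristic $\neq 2$, a non-degenerate form $\phi$ is isometric to a subform of a non-degenerate form $\psi$ if and only if $i_W(\psi\perp-\phi)\geq\dim\phi$. (The forward direction is the observation that $\psi\cong\phi\perp\psi'$ gives $\psi\perp-\phi\cong(\phi\perp-\phi)\perp\psi'$ with $\phi\perp-\phi$ hyperbolic of Witt index $\dim\phi$; the converse is standard and follows from Witt's cancellation and extension theorems.) Base change keeps $q$ and $\gamma_{a,k}$ non-degenerate, so the criterion applies over each $\Q_p$ (and over $\R$ for $p=\infty$) with $\phi=\gamma_{a,k}$ and $\psi=q$. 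Thus the condition $i_W((q\perp-\gamma_{a,k})_p)\geq k$ is equivalent to $\gamma_{a,k}$ being a subform of $q$ over $\Q_p$, and therefore
\[
    d' = \max\{k\in\N_0 \mid \gamma_{a,k}\subseteq q \text{ over } \Q_p \text{ for all places } p\}.
\]

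\textbf{Global principle.}
The global input is the Hasse--Minkowski representation theorem: for non-degenerate forms over $\Q$, one form is isometric to a subform of another over $\Q$ if and only if the embedding exists over every completion $\Q_p$ and over $\R$. Applied to $\gamma_{a,k}$ and $q$, this shows for each fixed $k$ that $\gamma_{a,k}\subseteq q$ over $\Q$ is equivalent to the local condition defining $d'$. Consequently the index sets $\{k\mid \gamma_{a,k}\subseteq q\}$ and $\{k\mid \forall p\colon i_W((q\perp-\gamma_{a,k})_p)\geq k\}$ coincide; both contain $k=0$ (the empty form embeds vacuously) and are bounded above by $n=\dim q$, so their maxima exist and agree, giving $d=d'$.

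\textbf{Main obstacle.}
The only genuinely nontrivial ingredient is the global representation theorem. The local--global principle for representing one quadratic form by another is delicate in small codimension: when $\dim q=\dim\gamma_{a,k}$ it collapses to Hasse--Minkowski for isometry classes, and the case $\dim q=\dim\gamma_{a,k}+1$ also needs care. I expect this to be the main obstacle to a self-contained argument; however, over the global field $\Q$ the representation theorem holds without restriction and may simply be invoked, after which the subform criterion and the bookkeeping of the two index sets are routine.
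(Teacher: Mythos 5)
Your proof is correct and follows essentially the same route as the paper: reduce the clique number to the maximal $k$ with $\gamma_{a,k}\subseteq q$, then combine the subform criterion of Lemma~\ref{SubformWittIndex} with a consequence of the Hasse--Minkowski theorem. The only (cosmetic) difference is the order of the two steps: the paper applies the subform criterion once over $\Q$ and then uses the local-global formula $i_W(\rho)=\min_p i_W(\rho_p)$ for the Witt index, whereas you apply the criterion at each completion and then invoke the local-global principle for representation of forms by forms; these are equivalent repackagings of the same input.
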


The proof of the theorem is given by Theorem~\ref{thm:local:global}.

\subsection{Structure of the paper}

In Section~\ref{Sec:IntroToInvariants} we provide some basic results about representation graphs, discussing in particular how isometry / similarity of quadratic forms and isomorphy of associated representation graphs are related. In particular we can bound the number of ``different'' representation graphs that can occur for any given quadratic form.

In Section~\ref{sec:structural:properties} we introduce a reduction of the representation graph of a quadratic form $q$ and a ring element $a$ to a smaller graph defined only on the elements that represent $a$ for $q$. The main use of this is not that these graphs are smaller per se, but rather that this linearizes the problem, see in particular Lemma~\ref{lem:LinCombOfCliqueInRed}. Building on this, we introduce a test form $\gamma_{a,n}$ and relate the clique number to the maximal dimension $n$ for which $q$ contains a subform isometric to $\gamma_{a,n}$. This allows us to employ algebraic tools to fruition.

In Section~\ref{sec:local:rings} we build upon the results of the preceding section. By purely algebraic considerations we are able to explicitly compute the clique number in various cases. Furthermore we can compute the number of maximum cliques over finite fields by means of the orbit-counting-theorem.

In Section~\ref{sec:characteristic:zero} we show how to recover the results of Chilakamarri \cite{Chilakamarri_1988} using the techniques of the preceding sections. 
As a byproduct we see that they hold in some more generality, namely not only over the rationals, but over any field of characterisic zero. 
In addition we compute the clique number for arbitrary quadratic forms over the real numbers. 
Lastly, we consider the relation of the clique number of forms over the rationals and their localizations over $p$-adic numbers.
We obtain a local-global principle that is applicable to arbitrary non-degenerate rational forms.

\section{Graph Properties of Representation Graphs}\label{Sec:IntroToInvariants}

We now recall some standard notation on graphs and provide some basic properties of representation graphs of quadratic forms.

Let $G = (V,E)$ be a graph, where $V$ is the \emph{vertex set} and $E \subseteq \binom{V}{2}$ the \emph{edge set} of $G$. We say that two vertices $u,v \in V$ are \emph{adjacent} in $G$ if $\{u,v\} \in E$. For a set $U \subseteq V$ the \emph{induced subgraph} is the graph $G[U] = (U,E[U])$, where $E[U] = \{ \{u,v\} \in E \ :\ u,v \in U \}$. 

A graph $G$ is \emph{complete} if $E = \binom{V}{2}$, i.e. if any two vertices are adjacent. 

A \emph{clique} $C \subseteq V$ is a vertex set such that the induced subgraph $G[C]$ is complete, i.e. every two vertices $u,v$ of $U$ are adjacent. 
The \emph{clique  number} $\omega(G)$ of $G$ is the size of a largest clique in $G$, a \emph{maximum Clique}. A clique is called \emph{maximal} if it is
not contained in any clique of larger size, note however, that a maximal clique is not necessarily maximum.

The group $\aut(G) = \{ \sigma \in \sym(V) \ :\ \{\sigma(u),\sigma(v)\} \in E \Leftrightarrow \{u,v\} \in E \}$ is the \emph{automorphism group} of the graph $G$. Graphs $G = (V,E),G' = (V',E')$ are \emph{isomorphic} if and only if there exists a bijective map $\rho: V \rightarrow V'$ such that $\{\rho(u),\rho(v)\} \in E' \Leftrightarrow \{u,v\} \in E$.

Since automorphisms (isomorphisms) of graphs preserve adjacency, it is immediately clear if $\sigma$ is any automorphism (isomorphism) a set $U \subseteq V$ is a clique if and only if $\sigma(U)$ is. 

In the context of representation graphs it is natural to ask how isomorphy of graphs relates to isometry of quadratic forms. For this let $A$ be a ring, $V$ an $A$-module, and $q:V \to A$ a quadratic form. Let $\og(q)$ be the \emph{orthogonal group} of isometries of $q$, that is maps
\[
    \varphi:\ V \rightarrow V,\ q(\varphi(v))=q(v).
\]
We write 
\begin{equation} \label{eq:affine:isometries}
    \iso(q) = V \rtimes \og(q)
\end{equation}
for the semi-direct product of the group of translations of the module $V$ and the orthogonal group $\og(q)$. We will sometimes refer to $\iso(q)$ as the affine isometry group of $q$ and its elements as affine isometries.

Let $\varphi$ be any (possibly affine) isometry of $q$. Then 
\[
    x \sim y \Longleftrightarrow \varphi(x) \sim \varphi(y).
\] 
Therefore we obtain an embedding of (affine) isometries of $q$ into the automorphism group of $\Gcal_{q,a}$, this already provides us with a good number of graph automorphism.

In particular, the group of translations $\tau_v:V \rightarrow V; x \mapsto v+x$ by module elements $v \in V$ acts transitively on $V$, and therefore the graph $\mathcal G_{d, a}=(V,E)$ is regular, i.e. all vertices have the same degree.

For further background on the theory of quadratic forms the reader is referred to \cite{MR2104929}.

In addition isometry of quadratic forms extends to isomorphy of graphs.
\begin{proposition} \label{IsometryAndSimilarityCalculationRules}
	Let $q, q'$ be quadratic forms on $A$-modules $V, V'$ and let $a, \alpha \in A$ with $\alpha$ not a zero-divisor. 
	\begin{enumerate}[label=(\alph*)]
		\item \label{IsometryAndSimilarityCalculationRulesA} If $q'$ is another quadratic form with $q\cong q'$, we then have $\mathcal G_{q,a}\cong\mathcal G_{q', a}$.
		\item\label{IsometryAndSimilarityCalculationRulesB} For every $\alpha\in A$ that is not a zero-divisor, we have $\mathcal G_{\alpha q, \alpha a}=\mathcal G_{q, a}$.
		\item\label{IsometryAndSimilarityCalculationRulesC} For every unit $u\in A^\ast$, we have $\mathcal G_{q, u^2a}\cong\mathcal G_{q, a}$.
	\end{enumerate}
\end{proposition}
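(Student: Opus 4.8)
The plan is to verify all three statements directly from the edge criterion $q(x-y)=a$ that defines $\mathcal G_{q,a}$, using the form axiom (QF1) together with the hypotheses on $\alpha$ and $u$; throughout, an isometry $q\cong q'$ is understood as a module isomorphism $\varphi:V\to V'$ with $q'(\varphi(v))=q(v)$ for all $v\in V$. For part \ref{IsometryAndSimilarityCalculationRulesA} I would take such a $\varphi$ as the candidate vertex bijection. Since $\varphi$ is linear, $\varphi(x)-\varphi(y)=\varphi(x-y)$, so $q'(\varphi(x)-\varphi(y))=q(x-y)$ by the isometry property. Hence $\{x,y\}$ is an edge of $\mathcal G_{q,a}$ precisely when $\{\varphi(x),\varphi(y)\}$ is an edge of $\mathcal G_{q',a}$, which is exactly the statement that $\varphi$ induces a graph isomorphism.

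For part \ref{IsometryAndSimilarityCalculationRulesB} the key observation is that both graphs share the identical vertex set $V$, so the claim is an equality of edge sets rather than a mere isomorphism. Here $\{x,y\}$ is an edge of $\mathcal G_{\alpha q,\alpha a}$ iff $\alpha\,q(x-y)=\alpha a$, i.e. iff $\alpha\bigl(q(x-y)-a\bigr)=0$. Since $\alpha$ is not a zero-divisor, this cancels to $q(x-y)=a$, the edge criterion for $\mathcal G_{q,a}$, so the two edge sets coincide.

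For part \ref{IsometryAndSimilarityCalculationRulesC} I would use the scaling map $\psi:V\to V,\ x\mapsto ux$, which is a bijection because $u\in A^\ast$ is a unit, with inverse $x\mapsto u^{-1}x$. Applying (QF1) gives $q(\psi(x)-\psi(y))=q(u(x-y))=u^2q(x-y)$, so $\{\psi(x),\psi(y)\}$ is an edge of $\mathcal G_{q,u^2a}$ iff $u^2q(x-y)=u^2a$. As $u^2$ is again a unit, hence not a zero-divisor, this is equivalent to $q(x-y)=a$, showing that $\psi$ is a graph isomorphism $\mathcal G_{q,a}\to\mathcal G_{q,u^2a}$.

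None of the three parts presents a genuine obstacle; the proposition is essentially bookkeeping built on the edge criterion. The only points needing a little care are conceptual rather than computational: distinguishing the literal equality of graphs in \ref{IsometryAndSimilarityCalculationRulesB} from the isomorphisms in \ref{IsometryAndSimilarityCalculationRulesA} and \ref{IsometryAndSimilarityCalculationRulesC}, and ensuring that each cancellation is licensed by the relevant hypothesis — the non-zero-divisor assumption in \ref{IsometryAndSimilarityCalculationRulesB}, and the unit assumption (used both to invert $\psi$ and to cancel $u^2$) in \ref{IsometryAndSimilarityCalculationRulesC}.
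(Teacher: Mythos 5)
Your proof is correct and follows essentially the same route as the paper: part \ref{IsometryAndSimilarityCalculationRulesA} via the observation that an isometry preserves the edge relation, part \ref{IsometryAndSimilarityCalculationRulesB} by cancelling the non-zero-divisor $\alpha$ in the edge criterion, and part \ref{IsometryAndSimilarityCalculationRulesC} via the scaling map $x\mapsto ux$ (which the paper phrases as $q\cong u^{-2}q$ and then reduces to the first two parts, but the underlying computation is identical). No gaps.
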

\begin{proof}
	\begin{enumerate}[label=(\alph*)]
		\item This is a consequence of the discussion above.		
		\item Since $\alpha$ is not a zero divisor we have the equivalence $$(\alpha q)(x) = \alpha a \Longleftrightarrow q(x) = a$$ proving the claim.
        \item If $u$ is a unit $q \cong u^{-2} q$ and the result follows from the above.
	\end{enumerate}
\end{proof}

We briefly note that part $(a)$ of the above proposition is not reversible, isomorphism of representation graphs does not imply isometry of the involved quadratic forms.
We give some examples of different nature illustrating this.

\begin{example}
	\begin{enumerate}[label=(\alph*)] 
		\item let $A=\mathbb F_3=V$ and $q=\qf1, q'=\qf2, a= 0$. As both quadratic forms are anisotropic, the graphs $\mathcal G_{q, 0}$ and $\mathcal G_{q', 0}$ have empty edge sets and are thus isomorphic.
		\item As another example, we can consider $q=\qf1, q'=\qf2$ over $A=\Z/9\Z$ with $a=6$. Then, both graphs $\mathcal G_{q, a}, \mathcal G_{q', a}$ consist of 9 isolated points.
		\item For $A=\Z/6\Z = M, q=\qf1, q'=\qf 5 = 5q, a=3$, we have $\mathcal G_{q, 3}=\mathcal G_{5q, 5\cdot 3}=\mathcal G_{q', 3}$ since we have $5\cdot 3=3$ in $\Z/6\Z$. Further, $q$ and $q'$ are not isometric as $q$ does not represent $5$, but $q'$ certainly does.
	\end{enumerate}
\end{example}

It is not hard to see that representation graphs are Cayley graphs for the group $V$ and symmetric set $S_a = \{ x \in V \ :\ q(x) = a \}$, but we do not need this fact in the subsequent discussion.

We close this section by determining an upper bound for the number of non-isomorphic representation graphs for a given quadratic form using the group of similarity factors.

\begin{defrem}
	\begin{enumerate}[label=(\alph*)]
		\item A unit $u\in A^\ast$ is called a \textit{similarity factor} for the quadratic form $q$ if we have $uq\cong q$. The set of all similarity factors forms a subgroup of $A^\ast$, denoted by $G_A(q)$.
		\item A quadratic form is called \textit{round}, if we have $G_A(q)=D_A(q)\cap A^\ast$.
		\item We have an equivalence relation on $A$ defined by $a\sim b$ if there exists some $u\in G_A(q)$ with $b=ua$. 
		In particular, the set 
		$$A/G_A(q):=\{aG_A(q)\mid a\in A\}$$
		is well defined.
	\end{enumerate}
\end{defrem}

\begin{proposition}
	Let $q$ be a quadratic form on an $A$-module $M$. We then have at most $|A/G_A(q)|$ different non-isomorphic graphs of the form $\mathcal G_{q, a}$ with $a\in A$.
\end{proposition}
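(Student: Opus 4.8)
The plan is to show that the isomorphism type of $\mathcal G_{q,a}$ depends only on the class $aG_A(q)\in A/G_A(q)$. If this holds, then assigning to each $a$ the isomorphism class of $\mathcal G_{q,a}$ descends to a surjection from $A/G_A(q)$ onto the set of isomorphism classes of representation graphs attached to $q$, and the bound follows immediately. Since $A/G_A(q)$ is precisely the set of equivalence classes under the relation $a\sim b\Leftrightarrow b=ua$ for some $u\in G_A(q)$, it suffices to prove the single implication $a\sim b\Rightarrow \mathcal G_{q,a}\cong\mathcal G_{q,b}$.

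To establish this, I would argue as follows. Suppose $b=ua$ with $u\in G_A(q)$, so that $uq\cong q$ by definition of a similarity factor. Since $u$ is a unit it is not a zero-divisor, so part~\ref{IsometryAndSimilarityCalculationRulesB} of Proposition~\ref{IsometryAndSimilarityCalculationRules}, applied with $\alpha=u$, gives the equality $\mathcal G_{uq,ua}=\mathcal G_{q,a}$. On the other hand, $uq\cong q$ together with part~\ref{IsometryAndSimilarityCalculationRulesA} yields the isomorphism $\mathcal G_{q,ua}\cong\mathcal G_{uq,ua}$. Chaining these two relations gives $\mathcal G_{q,b}=\mathcal G_{q,ua}\cong\mathcal G_{uq,ua}=\mathcal G_{q,a}$, which is exactly the desired implication.

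With the implication in hand, the conclusion is formal. Assigning to each $a\in A$ the isomorphism class of $\mathcal G_{q,a}$ defines a map on $A$ that, by what was just shown, is constant on each $G_A(q)$-orbit; it therefore factors through the quotient set $A/G_A(q)$. The induced map onto the set of isomorphism classes of the graphs $\mathcal G_{q,a}$ (for $a\in A$) is surjective by construction, so this set of isomorphism classes has cardinality at most $|A/G_A(q)|$, as claimed.

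There is no serious obstacle here: once the problem is reduced to the single implication $a\sim b\Rightarrow\mathcal G_{q,a}\cong\mathcal G_{q,b}$, the argument is a two-line composition of parts \ref{IsometryAndSimilarityCalculationRulesA} and \ref{IsometryAndSimilarityCalculationRulesB} of Proposition~\ref{IsometryAndSimilarityCalculationRules}. The only point requiring a moment's care is the observation that a unit is never a zero-divisor, which is precisely what makes part~\ref{IsometryAndSimilarityCalculationRulesB} applicable with $\alpha=u$. I would also remark that part~\ref{IsometryAndSimilarityCalculationRulesC} is recovered as the special case where the similarity factor is a square of a unit, since $u^2\in G_A(q)$ holds for every unit $u$ via the isometry $v\mapsto u^{-1}v$; the present proposition is thus the natural strengthening that allows arbitrary similarity factors.
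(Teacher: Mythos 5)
Your proof is correct and follows essentially the same route as the paper: the paper likewise reduces to showing $b=ua$ with $u\in G_A(q)$ implies $\mathcal G_{q,a}\cong\mathcal G_{q,b}$, and obtains this by chaining $\mathcal G_{q,a}=\mathcal G_{uq,ua}\cong\mathcal G_{q,ua}=\mathcal G_{q,b}$ via parts \ref{IsometryAndSimilarityCalculationRulesB} and \ref{IsometryAndSimilarityCalculationRulesA} of Proposition~\ref{IsometryAndSimilarityCalculationRules}, exactly as you do. Your additional remarks (the factoring through the quotient, and recovering part \ref{IsometryAndSimilarityCalculationRulesC} from $u^2\in G_A(q)$) are correct but not needed.
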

\begin{proof}
	Let $a,b\in A$ be given such that there is some $u\in G_A(q)$ with $b=ua$. Since $u$ is a unit and thus not a zero divisor, we have
	$$\mathcal G_{q, a}\cong\mathcal G_{uq, ua}\cong \mathcal G_{q, b}.$$
	by Proposition~\ref{IsometryAndSimilarityCalculationRules} \ref{IsometryAndSimilarityCalculationRulesA} and \ref{IsometryAndSimilarityCalculationRulesB} and the claim follows.
\end{proof}

\begin{corollary}
	Let $F$ be a field and $q$ a round form over $F$. We then have at most 3 non-isomorphic graphs of the form $\mathcal G_{q, a}$. If $q$ is further universal, we have at most 2 non-isomorphic such graphs.
\end{corollary}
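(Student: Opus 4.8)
The plan is to refine the bound of the preceding proposition, which via the $G_F(q)$-action only yields $|F/G_F(q)|$ and can badly overcount, by combining the algebraic structure forced by roundness with the combinatorial triviality of the representation graphs attached to non-represented scalars. First I would record the elementary observation that $\Gcal_{q,a}$ contains an edge if and only if there is a nonzero vector $z\in V$ with $q(z)=a$: indeed an edge $\{x,y\}$ gives $z=x-y\neq 0$ with $q(z)=a$, and conversely any such $z$ produces the edge $\{z,0\}$. Consequently, whenever $a\in F^\ast\setminus\bigl(D_F(q)\cap F^\ast\bigr)$ the graph $\Gcal_{q,a}$ is edgeless. Since any two edgeless graphs on the common vertex set $V$ are trivially isomorphic, all of these scalars — however many cosets of $G_F(q)$ they may span — contribute a single isomorphism class.

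Next I would invoke roundness. By definition $G_F(q)=D_F(q)\cap F^\ast$, so $D_F(q)\cap F^\ast$ is a subgroup of $F^\ast$, and by Proposition~\ref{IsometryAndSimilarityCalculationRules} the assignment $a\mapsto \Gcal_{q,a}$ is constant up to isomorphism on each coset $aG_F(q)$. As a group acts transitively on itself by multiplication, the entire set $D_F(q)\cap F^\ast=G_F(q)$ is a single such coset, hence yields exactly one isomorphism class.

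I would then partition the scalars as $F=\{0\}\sqcup\bigl(D_F(q)\cap F^\ast\bigr)\sqcup\bigl(F^\ast\setminus(D_F(q)\cap F^\ast)\bigr)$ and count the classes contributed by each block: the middle block gives one class by roundness, the right-hand block gives at most one class (the edgeless graph) by the first step, and the singleton $\{0\}$ contributes at most one further class. This bounds the total by three. For the universal case one has $D_F(q)\cap F^\ast=F^\ast$, so the right-hand block is empty and only the classes of $F^\ast$ and of $\{0\}$ survive, giving at most two.

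The step I expect to carry the real weight is the edgeless collapse rather than any group theory: it is precisely the fact that \emph{all} non-represented nonzero scalars give the same (empty) graph that improves the generally large quantity $|F/G_F(q)|$ down to a constant — witness $q=\qf1$ over $\Q$, where $F^\ast/G_F(q)$ is infinite yet only two graphs occur. The only point requiring a little care is the scalar $a=0$, which must be treated separately since $\Gcal_{q,0}$ is edgeless exactly when $q$ is anisotropic and may otherwise be a genuinely distinct class; for an upper bound it suffices to allow it its own class, so no finer analysis is needed.
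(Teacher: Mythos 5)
Your proof is correct, and since the paper states this corollary without proof, your argument supplies exactly the intended reasoning: the nonzero represented scalars form the single coset $G_F(q)=D_F(q)\cap F^\ast$ (one class by the preceding proposition), all non-represented scalars yield the edgeless graph (one class), and $a=0$ contributes at most one more, with the middle and last blocks merging when $q$ is universal. You are also right to flag that the proposition's bound $|F/G_F(q)|$ alone does not suffice and that the edgeless collapse is the essential extra observation.
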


\section{Structural Properties of Cliques in $\Gcal_{q,a}$} \label{sec:structural:properties}

\subsection{The Reduced Graph}

In general, computing the clique number is hard. For a representation graph $G$, we may just consider the following subgraph of $G$ who contains all information that is needed to compute the clique number.

\begin{definition}
    Let $G = \Gcal_{q,a} = (V,E)$ be a representation graph of a quadratic form $q$. The graph $\red G =\Gcalred_{q,a}= (\red V,\red E) = G[\red V]$ with
    \[
        \red V = \{ v \in V \ :\ q(v) = a \} \quad\text{and}\quad \red E = E[\red V]
    \]
    is the \emph{reduced} representation graph of $q$ \emph{with respect to} $a$.
\end{definition}

\begin{proposition}\label{CliqueNrWholeGraphReducedGraph}
    Let $G = \Gcal_{q,a} = (V,E)$ be a representation graph of a quadratic form $q$. 
    Then $$ \omega(G) = \begin{cases}
        \omega(\red G) + 1,&\text{ if }a\neq 0,\\
        \omega(\red G),&\text{ if }a= 0
    \end{cases}.$$
\end{proposition}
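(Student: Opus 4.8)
The plan is to exploit the translation automorphisms $\tau_v$ identified in Section~\ref{Sec:IntroToInvariants}: since each $\tau_v$ is an automorphism of $\Gcal_{q,a}$, it carries cliques to cliques of the same cardinality, so $\omega(G)$ is unaffected by translating any given clique. The idea is to use this freedom to move one vertex of a clique to the origin $0$, at which point the reduced graph appears naturally. The whole argument then reduces to bookkeeping, and the single relevant distinction is whether or not $0$ itself is a vertex of $\red G$.

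First I would fix a maximum clique $C \subseteq V$ of $G$ (which is nonempty, as $V$ contains $0$) and choose a vertex $v_0 \in C$. The translated set $C - v_0 := \tau_{-v_0}(C)$ is again a clique of the same cardinality and contains $0 = v_0 - v_0$. For any other element $w = c - v_0$ with $c \in C$, the adjacency of $c$ and $v_0$ in $C$ gives $q(w) = q(c - v_0) = a$, so $w \in \red V$. Hence $(C - v_0)\setminus\{0\} \subseteq \red V$ is a clique of $\red G$, while the origin itself satisfies $0 \in \red V$ if and only if $q(0) = 0 = a$. This equivalence is precisely the dichotomy driving the two cases.

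For $a \neq 0$ we have $0 \notin \red V$, so $(C - v_0)\setminus\{0\}$ is a clique of $\red G$ of size $|C| - 1$, giving $\omega(G) \leq \omega(\red G) + 1$. Conversely, every $v$ lying in a maximum clique $\red C$ of $\red G$ satisfies $q(v - 0) = q(v) = a$ and is therefore adjacent to $0$; since $0 \notin \red C$, the set $\red C \cup \{0\}$ is a clique of $G$ of size $\omega(\red G) + 1$, yielding the reverse inequality and hence equality. For $a = 0$ we instead have $0 \in \red V$, so the entire translated clique $C - v_0$ lies in $\red V$ and is a clique of $\red G$ of size $|C|$, giving $\omega(G) \leq \omega(\red G)$; the reverse inequality is immediate because $\red G$ is an induced subgraph of $G$, so every clique of $\red G$ is already a clique of $G$.

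I do not anticipate a genuine obstacle: once the translation automorphisms are available, the argument is elementary. The only point requiring care is tracking whether the origin belongs to $\red V$, which is exactly the case split $a = 0$ versus $a \neq 0$; conflating the two would introduce an off-by-one error in the clique count. It is also worth checking that the degenerate situations behave correctly, namely when $q$ does not represent $a$ (so $\red V = \varnothing$ and $G$ is edgeless, giving $\omega(G) = 1 = \omega(\red G) + 1$ for $a \neq 0$) and when $a = 0$ with $q$ anisotropic (so $\red V = \{0\}$ and again $G$ is edgeless, giving $\omega(G) = 1 = \omega(\red G)$); both are consistent with the size estimates above.
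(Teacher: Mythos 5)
Your proposal is correct and follows essentially the same route as the paper's proof: translate a maximum clique so that it contains $0$, observe that the remaining vertices land in $\red V$ by adjacency to $0$, and split on whether $0 \in \red V$ (i.e.\ whether $a=0$) to get the upper bound, with the lower bound coming from adjoining $0$ to a clique of $\red G$. The extra sanity checks of the degenerate cases are a welcome addition but not a departure from the paper's argument.
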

\begin{proof}
    For any $v \in V$ the translation $\tau_{-v}$ by $-v$ is an isometry of $(V, d_q)$ and thus an automorphism of $G$ as mentioned in Section~\ref{Sec:IntroToInvariants}.
    If $C$ is a clique in $G$ and $v\in C$, $C_0:=\tau_{-v}(C)$ is therefore a clique in $G$ of the same size that contains $0$.
    We further have $C_0 \setminus \{0\} \subseteq \red V$, since $q(w) = q(w-0) = a$ for every non-zero $w \in C_0$ by adjacency of $w$ and $0$. Thus $C_0 \setminus \{0\}$ is a clique in $\red G$ and if $a=0$, $C_0$ is a clique in $\red G$. This shows the upper bound for $\omega(G)$.

    On the other hand let $C' \subseteq \red V$ be a clique in $\red G$. 
    Then, by definition, $C' \cup \{0\}$ is a clique in $G$, showing the other inequality.
\end{proof}

The next result shows how we can translate cliques in $\red V\cup\{0\}$.

\begin{lemma}\label{TranslationOfCliquesInVred}
    Let $C$ is a clique in $\red V\cup\{0\}$ containing $0$ and $v\in V$. 
    Then $C-v=\{x-v\mid x\in C\}$ is a clique in $\red V\cup\{0\}$ containing $0$ if and only if $v\in C$.
\end{lemma}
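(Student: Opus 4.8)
The plan is to separate the biconditional into its two implications and to dispose of the easy containment condition first. Observe that $0 \in C - v$ holds precisely when there is some $x \in C$ with $x - v = 0$, that is, precisely when $v \in C$. Hence the requirement that $C - v$ contain $0$ is \emph{by itself} equivalent to $v \in C$. This immediately settles the forward implication: if $C - v$ is a clique in $\red V \cup \{0\}$ containing $0$, then in particular $0 \in C - v$, which forces $v \in C$.

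For the converse I would assume $v \in C$ and verify the two defining conditions for $C - v$. First, $0 = v - v \in C - v$ by the observation above. Second, to see that $C - v$ is a clique in the induced subgraph on $\red V \cup \{0\}$, I would invoke the fact recorded in Section~\ref{Sec:IntroToInvariants} that the translation $\tau_{-v}\colon x \mapsto x - v$ is an isometry of $q$, and hence an automorphism of $\Gcal_{q,a}$. Since $C$ is a clique in $\Gcal_{q,a}$ and automorphisms preserve adjacency, its image $C - v = \tau_{-v}(C)$ is again a clique in $\Gcal_{q,a}$.

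The one point that needs genuine care, and which I expect to be the crux, is that being a clique in the ambient graph $\Gcal_{q,a}$ is not the same as being a clique in $\red V \cup \{0\}$: I must still check the inclusion $C - v \subseteq \red V \cup \{0\}$. For an arbitrary $x \in C$ I would distinguish two cases. If $x = v$, then $x - v = 0 \in \red V \cup \{0\}$. If $x \neq v$, then $x$ and $v$ are two distinct vertices of the clique $C$, hence adjacent, so that $q(x - v) = a$ and therefore $x - v \in \red V$. This yields $C - v \subseteq \red V \cup \{0\}$, and combined with the preceding paragraph it shows that $C - v$ is a clique in $\red V \cup \{0\}$ containing $0$, completing the proof.
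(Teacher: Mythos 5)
Your proof is correct and follows essentially the same route as the paper's: the forward direction via $0\in C-v$ forcing $v\in C$, and the converse via the case split $x=v$ versus $x\neq v$ to establish $C-v\subseteq \red V\cup\{0\}$. The only cosmetic difference is that you cite the general fact that $\tau_{-v}$ is a graph automorphism to get adjacency preservation, where the paper just computes $q\bigl((x-v)-(y-v)\bigr)=q(x-y)=a$ directly; these are interchangeable.
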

\begin{proof}
    It is clear that we have $0\in C-v$ only if $v\in C$.
    So let now $x,y,v\in C$ with $x\neq y$.
    If $x=v$, we clearly have $x-v=0\in \red V\cup\{0\}$. 
    If $x\neq v$, we have $q(x-v)=a$ since $x,v$ are different elements of the clique $C$ and thus adjacent. 
    Finally we have
    $$q\left((x-v)-(y-v)\right)=q(x-y)=a$$
    with the same argumentation as above.
\end{proof}

For the rest of this section, we will always assume that $A$ is a ring, $a\in A$ and $q$ is a quadratic form on an $A$-module $V$. 
The associated bilinear form is denoted by $b:=b_q$.
Let $G=\Gcal_{q,a} = (V,E)$ be the representation graph of $q$ with respect to $a$ and $\red G = (\red V,\red E) = G[\red V]$ be the reduced representation graph. 
We assume $\red G$ to be non-empty. 
This is clearly fulfilled for finite fields and forms of dimension at least $2$. 
Notice that we do not restrict our rings to be finite since most of the arguments also hold over arbitrary rings resulting in infinite graphs. 

\begin{lemma}\label{BilinearFormCliqueReducedGraph}
    Let $x\neq y\in V(\red G)$. They are adjacent if and only if we have $b(x,y)=a$. 
\end{lemma}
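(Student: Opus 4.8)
The plan is to reduce the adjacency condition to a statement about the associated bilinear form via the polarization identity. Since $x,y\in V(\red G)=\red V$, by definition of the reduced graph we have $q(x)=q(y)=a$. Moreover, because $\red G$ is the induced subgraph $G[\red V]$, the vertices $x$ and $y$ are adjacent in $\red G$ precisely when they are adjacent in $G$, i.e. exactly when $q(x-y)=a$. Hence it suffices to show that $q(x-y)=a$ if and only if $b(x,y)=a$.

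The key step is to expand $q(x-y)$ using the bilinear form $b=b_q$. By (QF1) applied with the scalar $-1$ we have $q(-y)=(-1)^2q(y)=q(y)$, and by bilinearity of $b$ we have $b(x,-y)=-b(x,y)$. On the other hand, the very definition of $b$ gives $b(x,-y)=q(x-y)-q(x)-q(-y)=q(x-y)-q(x)-q(y)$. Combining these two expressions for $b(x,-y)$ yields
\[
    q(x-y)=q(x)+q(y)-b(x,y)=2a-b(x,y),
\]
where the last equality uses $q(x)=q(y)=a$.

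With this formula the claim is immediate: $q(x-y)=a$ holds if and only if $2a-b(x,y)=a$, that is, if and only if $b(x,y)=a$. This establishes both directions of the equivalence simultaneously.

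There is essentially no serious obstacle here; the only points requiring (minor) care are the sign bookkeeping in the bilinear expansion and the identity $q(-y)=q(y)$, which holds in every characteristic by (QF1). I would deliberately avoid invoking $b(x,x)=2q(x)$ in the argument, since dividing by $2$ could be problematic when $2$ is a zero-divisor in $A$; the computation above only uses linearity of $b$ in its second slot together with the evenness of $q$, so it remains valid over an arbitrary ring $A$, as required by the standing assumptions of this section.
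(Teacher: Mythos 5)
Your proof is correct and follows essentially the same route as the paper's: both reduce adjacency to $q(x-y)=a$ and then use the polarization identity $q(x-y)=q(x)+q(y)-b(x,y)=2a-b(x,y)$. The only difference is that you carefully derive this identity from (QF1) and the definition of $b$, whereas the paper simply asserts it.
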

\begin{proof}
    The two vertices $x,y$ are adjacent in $\red G$ if and only if we have $q(x-y) = a$. 
    We further have
    \begin{align*}
        q(x-y)=q(x)+q(y)-b(x,y)=2a-b(x,y)
    \end{align*}
    which is equivalent to $b(x,y)=2a-q(x-y)$.
    Putting the above equivalences together, we see that $x,y$ are adjacent if and only if $a=b(x,y)$.
\end{proof}

\begin{lemma}\label{lem:LinCombOfCliqueInRed}
    Let $x_1,\ldots, x_n\in \red V$ be a clique, let $\lambda_1,\ldots, \lambda_n\in A$ and $x=\sum\limits_{i=1}^n \lambda_ix_i$. Then
    $$q(x)=a\sum\limits_{1\leq i\leq j\leq n}\lambda_i\lambda_j.$$
\end{lemma}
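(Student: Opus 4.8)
The plan is to expand $q(x)$ where $x=\sum_{i=1}^n \lambda_i x_i$ using the defining properties (QF1) and (QF2) of a quadratic form, and then exploit the fact that the $x_i$ form a clique in $\red V$ to evaluate every term explicitly. The key structural input is Lemma~\ref{BilinearFormCliqueReducedGraph}, which tells us that for distinct clique vertices $x_i,x_j$ we have $b(x_i,x_j)=a$, together with the fact that each $x_i\in\red V$ satisfies $q(x_i)=a$ by definition of the reduced graph.

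First I would establish the general polarization identity for a linear combination. Iterating bilinearity of $b_q$ and the relation $q(x+y)=q(x)+q(y)+b(x,y)$, one obtains
\begin{equation*}
    q\Bigl(\sum_{i=1}^n \lambda_i x_i\Bigr)=\sum_{i=1}^n \lambda_i^2\, q(x_i)+\sum_{1\le i<j\le n}\lambda_i\lambda_j\, b(x_i,x_j).
\end{equation*}
This is the natural multivariate analogue of (QF1) and (QF2); it can be proved cleanly by induction on $n$, where the inductive step splits off the last summand $\lambda_n x_n$, applies (QF2) to the two pieces, and uses bilinearity to distribute $b(\,\cdot\,,\lambda_n x_n)$ across the partial sum. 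I would state this as the computational core of the argument.

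Next I would substitute the clique information. Since each $x_i\in\red V$, we have $q(x_i)=a$, so the diagonal terms contribute $\sum_i \lambda_i^2 a$. Since the $x_i$ are pairwise adjacent in $\red G$, Lemma~\ref{BilinearFormCliqueReducedGraph} gives $b(x_i,x_j)=a$ for all $i<j$, so the off-diagonal terms contribute $\sum_{i<j}\lambda_i\lambda_j\, a$. Factoring out $a$ yields
\begin{equation*}
    q(x)=a\Bigl(\sum_{i=1}^n \lambda_i^2+\sum_{1\le i<j\le n}\lambda_i\lambda_j\Bigr)=a\sum_{1\le i\le j\le n}\lambda_i\lambda_j,
\end{equation*}
where in the last step the $i=j$ terms $\lambda_i^2$ are absorbed into the range $i\le j$. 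This is exactly the claimed formula.

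I do not anticipate a genuine obstacle here; the only point requiring mild care is the bookkeeping in the induction for the polarization identity, making sure the cross terms $b(x_i,x_j)$ each appear exactly once (hence the strict inequality $i<j$ before the diagonal is merged in). One should also note that the statement implicitly requires the $x_i$ to be pairwise distinct for Lemma~\ref{BilinearFormCliqueReducedGraph} to apply to the off-diagonal terms, which is automatic for a clique. No finiteness or characteristic hypothesis on $A$ is used, so the lemma holds in the full generality stated.
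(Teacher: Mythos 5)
Your proposal is correct and follows essentially the same route as the paper: an induction splitting off the last summand $\lambda_n x_n$, applying (QF1)/(QF2), and substituting $q(x_i)=a$ and $b(x_i,x_j)=a$ from Lemma~\ref{BilinearFormCliqueReducedGraph}. The only cosmetic difference is that you state the general polarization identity first and specialize afterwards, while the paper substitutes the clique data inside the induction.
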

\begin{proof}
    We have $q(x_i)=a$ for all $i\in\{1,\ldots, n\}$ and $b(x_i, x_j)=a$ for all $i,j\in\{1,\ldots, n\}$ with $i\neq j$ by definition of $\red V$ and Lemma~\ref{BilinearFormCliqueReducedGraph}. 
    For $n=1$, we have 
    \begin{align*}
        q(\lambda_1x_1)=\lambda_1^2q(x_1)=a\lambda_1^2
    \end{align*}
    as desired.
    For $n\geq2$, we have obtain
    \begin{align*}
        q(x)&=q\left(\sum\limits_{i=1}^{n-1}\lambda_i x_i +\lambda_n x_n\right)\\
        &=q\left(\sum\limits_{i=1}^{n-1}\lambda_i x_i\right)+q(\lambda_nx_n)+b\left(\sum\limits_{i=1}^{n-1}\lambda_i x_i, \lambda_nx_n\right)\\
        &=q\left(\sum\limits_{i=1}^{n-1}\lambda_i x_i\right) + \lambda_n^2\underbrace{q(x_n)}_{=a} + \lambda_n\sum\limits_{i=1}^{n-1}\lambda_i\underbrace{b(x_i, x_n)}_{=a}\\
        &=q\left(\sum\limits_{i=1}^{n-1}\lambda_i x_i\right) + a\sum\limits_{i=1}^n\lambda_i\lambda_n
    \end{align*}
    and the claim follows by induction.
\end{proof}

\subsection{Isotropic Cliques}

The above result directly shows how we can determine large cliques for $a=0$, i.e. cliques consisting of isotropic vectors.

\begin{proposition}
    Let $a=0$ and $x_1,\ldots, x_n\in C\subseteq \red V$ be elements of some clique $C$ in $\red G$. Then $W:=\spn(x_1,\ldots, x_n)$ is a totally isotropic subspace and thus a clique in $\red G$. 
\end{proposition}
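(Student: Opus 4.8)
The plan is to leverage Lemma~\ref{lem:LinCombOfCliqueInRed} directly, since it gives an exact formula for $q$ evaluated on arbitrary $A$-linear combinations of clique elements. When $a=0$, that formula collapses entirely: for any $x=\sum_{i=1}^n\lambda_i x_i$ we immediately get $q(x)=0\cdot\sum_{1\leq i\leq j\leq n}\lambda_i\lambda_j=0$. Thus the very same computation that proved the lemma shows that \emph{every} element of $W=\spn(x_1,\ldots,x_n)$ is isotropic, which is exactly the assertion that $W$ is totally isotropic. So the core of the proof is a one-line invocation of the lemma with $a=0$.

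The remaining content is that $W$ is a clique in $\red G$. Here I would first note that $\red V=\{v\in V\mid q(v)=a\}=\{v\in V\mid q(v)=0\}$ when $a=0$, so the totally isotropic property $q(w)=0$ for all $w\in W$ says precisely that $W\subseteq\red V$ (indeed $W\subseteq\red V\cup\{0\}=\red V$, as $0$ is itself isotropic). To check that $W$ is a clique, I would use Lemma~\ref{BilinearFormCliqueReducedGraph}: two distinct vertices $x,y\in\red V$ are adjacent if and only if $b(x,y)=a=0$. For $x=\sum_i\lambda_i x_i$ and $y=\sum_j\mu_j x_j$ in $W$, bilinearity of $b$ gives $b(x,y)=\sum_{i,j}\lambda_i\mu_j\, b(x_i,x_j)$, and every term vanishes: the off-diagonal terms have $b(x_i,x_j)=a=0$ by the adjacency of distinct clique elements, and the diagonal terms have $b(x_i,x_i)=2q(x_i)=2a=0$. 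Hence $b(x,y)=0$ for all $x,y\in W$, so any two distinct elements of $W$ are adjacent and $W$ is a clique.

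I do not anticipate a genuine obstacle, as the statement is essentially an immediate corollary of the preceding lemmas; the only point requiring a little care is bookkeeping. Specifically, one should confirm that the claim is really about all of $W$ and not merely a spanning set, which is handled by the fact that the linear-combination formula applies to arbitrary coefficients $\lambda_i\in A$. One should also be mindful that $W$ may contain $0$ and that repeated or linearly dependent $x_i$ cause no trouble, since the vanishing of $b(x,y)$ is verified termwise regardless of dependencies. A final small remark worth making explicit is that the argument works over an arbitrary ring $A$ and does not require $2$ to be invertible, because $b(x_i,x_i)=2q(x_i)=2a$ vanishes simply from $a=0$ rather than from any cancellation of the factor $2$.
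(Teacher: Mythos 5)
Your proof is correct and follows essentially the same route as the paper: both hinge on specializing Lemma~\ref{lem:LinCombOfCliqueInRed} to $a=0$ to conclude that $q$ vanishes on all of $W$. The only (immaterial) difference is in the routine clique verification: the paper simply notes that $W$ is closed under subtraction, so $q(x-y)=0=a$ for distinct $x,y\in W$ directly from the definition of adjacency, whereas you route the same check through the bilinear-form criterion of Lemma~\ref{BilinearFormCliqueReducedGraph}; both are valid over an arbitrary ring.
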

\begin{proof}
    Putting $a=0$ in Lemma~\ref{lem:LinCombOfCliqueInRed}, we readily see that $\spn(x_1,\ldots, x_n)$ is a totally isotropic subspace.

    Since totally isotropic subspaces are closed under subtraction, they are obviously cliques in $\red G$.
\end{proof}

Over local rings, the dimension of a maximal totally isotropic subspace (short: m.t.i.s.) can be computed using the Witt index, see \cite[Chapter I, (3.6) Theorem, (4.6) Theorem]{MR0491773}. Using Proposition~\ref{CliqueNrWholeGraphReducedGraph}, we thus obtain the following:

\begin{corollary}
    Let $A$ be a finite local ring, $q$ be a non-degenerate form quadratic form. Let further $i= i_W(q)$ be the Witt index of $q$. We have $\omega(\Gcal) = |A|^i$. 
\end{corollary}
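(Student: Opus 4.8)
The plan is to combine the preceding proposition with the structure theory of quadratic forms over finite local rings. The preceding proposition tells us that for $a=0$, any set of clique vertices in $\Gcalred$ spans a totally isotropic subspace which is itself a clique, so the maximum cliques in $\Gcalred$ are precisely the maximal totally isotropic submodules of $V$. Hence $\omega(\Gcalred)$ equals the cardinality of a maximal totally isotropic (free) submodule $W$. The cited Witt-index results guarantee that every such $W$ is free of rank equal to the Witt index $i = i_W(q)$, and since $A$ is a finite local ring, a free module of rank $i$ has exactly $|A|^i$ elements. Thus $\omega(\Gcalred) = |A|^i$.

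To finish, I would apply Proposition~\ref{CliqueNrWholeGraphReducedGraph} in the case $a=0$, which gives $\omega(\Gcal) = \omega(\Gcalred)$ directly (no $+1$ correction is needed when $a=0$). Combining this with $\omega(\Gcalred) = |A|^i$ yields the claimed formula $\omega(\Gcal) = |A|^i$. The reduction from clique vertices to the totally isotropic subspace they span, together with the translation normalization from Proposition~\ref{CliqueNrWholeGraphReducedGraph} that lets us assume $0$ lies in the clique, is what makes the problem purely algebraic here.

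The main obstacle I anticipate is the upper bound, that is, verifying that no clique can exceed the size dictated by the Witt index. The lower bound is immediate: a maximal totally isotropic submodule of rank $i$ furnishes a clique of size $|A|^i$ by the preceding proposition. For the upper bound I would argue that a maximum clique containing $0$ spans a totally isotropic submodule $W$; by the Witt-index theory over finite local rings this submodule is contained in a maximal one of rank $i_W(q)$, bounding the clique size by $|A|^{i_W(q)}$. The delicate point is that over a general finite local ring (as opposed to a field) one must invoke the cited theorems to ensure maximal totally isotropic submodules are free of the expected rank and that the Witt index is the correct invariant governing their size; assuming those structural results, the remaining steps are routine counting.
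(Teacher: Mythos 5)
Your argument is exactly the paper's: the corollary is given no separate proof there and is justified by combining the preceding proposition (the span of an isotropic clique is a totally isotropic submodule, and every such submodule is a clique) with the cited Witt-index description of maximal totally isotropic submodules, plus Proposition~\ref{CliqueNrWholeGraphReducedGraph} in the case $a=0$. The delicate point you flag --- that over a finite local ring which is not a field one must rely on the cited structure theory to control the size of maximal totally isotropic submodules --- is precisely the step the paper also delegates entirely to the reference.
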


\begin{remark}
	The number of maximal totally isotropic subspaces is well known, see \cite[Theorem 3.2]{MR133725} for the case of a finite field and TODO for the case of a finite local ring. Let $q$ be a quadratic form of dimension $n$ over a finite field with $f$ elements. Recall that the Witt index $i:=i(q)$ of $q$ is given by $\left\lfloor \frac n2\right\rfloor$ if $n=2k+1$ is odd and by $\frac n2$ or $\frac n2-1$ if $n=2k$ is even, depending on whether $q$ is hyperbolic or not.
	The number of maximal totally isotropic subspaces is then given by 
	\begin{align*}
		\prod\limits_{r=0}^{k-1}\frac{f^{2(r+1)}-1}{f^{-r}-1},&\text{ if }n=2k+1\\
		\prod\limits_{r=0}^{k-1}\frac{f^{2(r+1)}-f^{r}+f^{r+1}-1}{f^{k-r}-1},&\text{ if }n=2k, q\text{ is hyperbolic}\\
		\prod\limits_{r=0}^{k-1}\frac{f^{2(r+2)}+f^{r+1}-f^{r+2}-1}{f^{k-r}-1},&\text{ if }n=2k, q\text{ is not hyperbolic}\\
	\end{align*}
	For finite local rings the situation is more involved. A treatment can be found in \cite{zimmermann:2017:phd}. Here several types of m.t.i.s. (cf. \cite[Theorem 2.4.4]{zimmermann:2017:phd}) appear and two m.t.i.s. are in the same orbit of the orthogonal group of the quadratic form if and only if they are of the same type. The number of m.t.i.s. of a fixed type is given in \cite[Proposition 2.5.5]{zimmermann:2017:phd}). Note that these results directly generalize to arbitrary finite rings (cf. \cite[Proposition 2.5.1]{zimmermann:2017:phd}).
\end{remark}

\subsection{Non-Isotropic Cliques}

We now turn our attention to the case of cliques where the difference of the vectors are non-isotropic vectors. 

We denote the identity matrix and the matrix whose entries are all $1$ of size $n\times n$ by $I_n$ respectively $J_n$. 
We will further denote the all ones vector of length $n$ by $\mathbf 1_n$.
If the integer $n$ is clear from the context, we will drop the subscript to simplify our notation.

\begin{lemma}\label{LemUpperBoundClique}
    Let $a\in A$ be not a zero divisor, $n\geq 2$ and $x_1,\ldots, x_n\in C\subseteq\red V$ be pairwise distinct elements of a clique $C\subseteq \red V$ and let $W:=\spn(x_1,\ldots, x_n)$.
    Let $k=n+2$ if $2\in A^\ast$ and $k=\frac{n+2}2$ otherwise.
    If $k\notin\Z$ or $\cha A\nmid k$, we have
    \begin{align*}
        W\cap C = \{x_1,\ldots,x_n\}
    \end{align*}
    and otherwise, we have 
    \begin{align*}
        W\cap C\subseteq\{x_1,\ldots,x_n\} \cup \{-\sum\limits_{i=1}^nx_i\}.
    \end{align*}
    Moreover, we can add $-\sum\limits_{i=1}^nx_i$ in the latter case to $C$ and still obtain a clique.
\end{lemma}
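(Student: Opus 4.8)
The plan is to coordinatize an arbitrary element of $W\cap C$ and extract scalar identities from the clique conditions via Lemmas~\ref{BilinearFormCliqueReducedGraph} and~\ref{lem:LinCombOfCliqueInRed}. Fix $x\in W\cap C$ and write $x=\sum_{i=1}^n\lambda_i x_i$ with $\lambda_i\in A$, and set $s=\sum_{i=1}^n\lambda_i$. The only interesting case is $x\notin\{x_1,\dots,x_n\}$, so I assume this. Then $x$ and each $x_j$ are distinct vertices of the clique $C$, hence adjacent, so Lemma~\ref{BilinearFormCliqueReducedGraph} gives $b(x,x_j)=a$ for every $j$.

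First I would compute $b(x,x_j)$ by bilinearity, using $b(x_i,x_j)=a$ for $i\neq j$ (clique condition) and $b(x_j,x_j)=2q(x_j)=2a$; this yields $b(x,x_j)=a(s+\lambda_j)$. Since $a$ is not a zero divisor, $b(x,x_j)=a$ forces $s+\lambda_j=1$, hence $\lambda_j=1-s$ for every $j$. Thus all coordinates agree, $\lambda_j=\mu:=1-s$, and summing gives $s=n\mu$, so that $(n+1)\mu=1$; in particular $\mu$ is a unit. Next I would feed $q(x)=a$ (valid since $x\in\red V$) into Lemma~\ref{lem:LinCombOfCliqueInRed}, which, with all coordinates equal to $\mu$, reads $\tfrac{n(n+1)}{2}\mu^2=1$ after cancelling the non-zero-divisor $a$.

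The heart of the argument is to combine $(n+1)\mu=1$ with $\tfrac{n(n+1)}{2}\mu^2=1$. Multiplying the second identity by $2$ gives $n(n+1)\mu^2=2$ as an equality in $A$ (no division by $2$ is performed), while multiplying the first by $\mu$ gives $(n+1)\mu^2=\mu$; together these force $n\mu=2$, and subtracting from $(n+1)\mu=1$ yields $\mu=-1$. Hence $n+2=0$ in $A$ and $x=\mu\sum_i x_i=-\sum_i x_i$. This simultaneously proves the inclusion $W\cap C\subseteq\{x_1,\dots,x_n\}\cup\{-\sum_i x_i\}$ and shows that no element outside $\{x_1,\dots,x_n\}$ can occur unless $n+2=0$ in $A$. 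For the ``moreover'' statement I would verify directly that $y=-\sum_i x_i$ lies in $\red V$ and is adjacent to each $x_j$: the formulas above give $b(y,x_j)=-(n+1)a$ and $q(y)=\tfrac{n(n+1)}{2}a$, so $y$ extends the clique precisely when $n+2=0$ and $\binom{n+1}{2}=1$ hold in $A$.

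It remains to match this algebraic condition with the stated dichotomy in terms of $k$ and $\cha A$, and this bookkeeping is where I expect the main (if modest) difficulty. When $2\in A^\ast$ one has $k=n+2$, and $\cha A\mid k$ is literally $n+2=0$ in $A$, which also forces $\binom{n+1}{2}=1$ since one may divide by $2$. When $2\notin A^\ast$ one has $k=\tfrac{n+2}{2}$, and a short parity analysis is needed: the identity $\binom{n+1}{2}=1$ must be used to upgrade ``$n+2=0$ in $A$'' to ``$\tfrac{n+2}{2}=0$ in $A$'', and to exclude odd $n$ (for which $(n+1)\mu=1$ and $\binom{n+1}{2}=1$ would make $2$ a unit, a contradiction). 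Concretely, writing $m=\tfrac{n+2}{2}$ for even $n$, the two conditions become $2m=0$ and $2m^2-3m=0$ in $A$, whence $3m=0$ and therefore $m=0$, i.e. $\cha A\mid k$; conversely $\cha A\mid k$ returns both required identities. Assembling the cases $2\in A^\ast$ and $2\notin A^\ast$ then gives exactly the claimed dichotomy.
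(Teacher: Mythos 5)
Your proof is correct and follows essentially the same route as the paper: both reduce the clique conditions on $x=\sum_i\lambda_i x_i$ to the linear system $(I+J)\lambda=\mathbf 1$ (you via $b(x,x_j)=a$, the paper via comparing $q(x-x_j)=a$ with $q(x)=a$, which is the same computation by Lemma~\ref{BilinearFormCliqueReducedGraph}), conclude $\lambda=\mu\mathbf 1$ with $(n+1)\mu=1$, and then impose the single quadratic condition $q(x)=a$ to pin down the characteristic condition. Your endgame --- deriving $n\mu=2$ and hence $\mu=-1$ by integer manipulations before doing the bookkeeping on the invertibility of $2$ --- is a mild streamlining of the paper's case split, not a different method.
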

\begin{proof}
    Assume there exists $x\in (W\cap C)\setminus\{x_1,\ldots, x_n\}$. Then, there are $\lambda_1,\ldots, \lambda_n\in A$ with $x=\sum\limits_{i=1}^n\lambda_ix_i$. As we have $x\in C \subseteq \red V$ and by using Lemma~\ref{lem:LinCombOfCliqueInRed} we obtain
    \begin{align*}
        a=q(x)=a\sum\limits_{1\leq i\leq j\leq n}\lambda_i\lambda_j
    \end{align*}
    which implies  
    \begin{align}\label{eq:FirstCondition}
        1=\sum\limits_{1\leq i\leq j\leq n}\lambda_i\lambda_j
    \end{align}
    because $a$ is not a zero divisor.\\
    Since we have $x\in C\setminus\{x_1,\ldots, x_n\}$, we further have $q(x-x_k)=a$ for all $k\in\{1,\ldots, n\}$. We have
    \begin{align*}
        x-x_k=\sum\limits_{i=1}^n\mu_ix_i\text{ where }\mu_i=\begin{cases}
            \lambda_i,&\text{if }i\neq k,\\
            \lambda_i-1,&\text{if } i = k.
        \end{cases}
    \end{align*}
    As we have $x-x_k\in \red V$ we can argue as above to obtain 
    \begin{align*}
        1&=\sum\limits_{1\leq i\leq j\leq n}\mu_i\mu_j=\sum\limits_{1\leq i\leq j\leq n}\lambda_i\lambda_j-\sum\limits_{\substack{i=1\\i\neq k}}^n\lambda_i+1-2\lambda_k
    \end{align*}
    which is, using \eqref{eq:FirstCondition}, equivalent to $2\lambda_k+\sum\limits_{\substack{i=1\\i\neq k}}^n \lambda_i=1$.
    This holds for all $k\in\{1,\ldots, n\}$, so the vector $\lambda = (\lambda_1,\ldots, \lambda_n)^\top$ satisfies the equality 
    \begin{align}\label{SysLinEq}
        (I + J)\lambda = \mathbf 1,
    \end{align}
    Since we have $J\lambda\in\spn(\mathbf 1)$, equation \eqref{SysLinEq} implies $\lambda\in \spn(\mathbf 1)$. Thus there exists $\alpha \in A$ with $\lambda = \alpha\mathbf 1$ and we obtain
    \begin{align*}
        \mathbf 1 = (I+J) \alpha \mathbf 1= \alpha I\mathbf 1+ \alpha J\mathbf 1= \alpha \mathbf 1+n \alpha \mathbf 1= \alpha(1+n)\mathbf 1.
    \end{align*}
    This is equivalent to $(1+n)\alpha = 1$.
    This is thus solvable if and only if $1+n\in A^\ast$ and the unique solution is given by $\alpha = (n+1)^{-1}$.\\
    To determine when we can add $x:=\frac{1}{n+1}\sum\limits_{i=1}^nx_i$ to $C$ and still obtain a clique in $\red G$, we first remark that by construction of the above system of linear equations, we have $q(x-x_k)= a$ for all $k\in \{1,\ldots, n\}$ if we have $q(x)=a$. 
    
    We thus need only to determine under which circumstances we have $q(x)=a$.
    By Lemma~\ref{lem:LinCombOfCliqueInRed} and using that $a$ is not a zero divisor, this is the case if and only if we have
    \begin{align}\label{eq:AddingToAClique}
        (n+1)^2=\sum\limits_{1\leq i\leq j\leq n}1.
    \end{align}
    If 2 is invertible, the right hand sum is given by $\frac{n(n+1)}2$ and the equality becomes $n+2=0$, i.e. $\cha A\mid n+2$, which implies $n+1=-1=(n+1)^{-1}$.\\
    If 2 is not invertible, $n$ has to be even since otherwise, $1+n$ would not be a unit.
    In particular, we have $\frac n2\in\Z$ and \eqref{eq:AddingToAClique} is equivalent to
    \begin{align*}
        n+1=\frac n2\iff \frac n2+1=0,
    \end{align*}
    i.e. $\cha A\mid \frac n2+1$ which again implies $n+1=-1=(n+1)^{-1}$.
\end{proof}

\begin{remark} \label{rem:clique:independent}
    Note that in the situation of Lemma~\ref{LemUpperBoundClique} any clique $\{x_1,\ldots,x_n\}$ contains at least $n-1$ linearly independent vectors. To see this choose a subset of $\{x_1,\ldots,x_n\}$ which is a basis of the space $W=\spn (x_1,\ldots,x_n)$ and applying the lemma to it instead.
\end{remark}

While the above results yield upper bounds for the maximum size of a clique in $\red G$, we now turn to the construction of biggest possible cliques and thus to lower bounds.

As we have already seen, quadratic forms of the following type play a crucial role when determining the clique number.

\begin{defrem}\label{MaxCliqueForm}
    We write $\gamma_{a,n}$ for the $n$-dimensional quadratic form on $A^n$ with $\gamma_{a,n}(e_i)=a$ for all $i\in\{1,\ldots, n\}$ and whose associated bilinear form has $a(I_n+J_n)$ as a Gram matrix for the canonical basis $(e_1,\ldots, e_n)$. 
    
    The determinant of the Gram matrix is given by $(n+1)\cdot a^n$. 
    In particular, $\gamma_{a,n}$ is non-degenerate if and only if $a\in A^\ast$ and $(n+1)\in A^\ast$ and we have $\gamma_{a,n}\cong a\gamma_{1, n}$.
\end{defrem}
\begin{proof}
    Replacing the last row with the sum of all rows and then subtracting the last column from all others will yield an upper triangle matrix with diagonal entries $a, \ldots, a, (n+1)a$, so the claim follows.
\end{proof}

Note that $\gamma_{a,n}$ is no new object. Readers familiar with euclidean lattices will recognize $\gamma_{1, n}$ as the quadratic form associated to the $A_n$ root lattice, see e.g. \cite[Theorem 1.2]{MR2977354}.

Since the determinant of the associated bilinear form does not contain much information in characteristic 2, we determine the Arf invariant $\Delta(\gamma_{a_n})$ to be able to identify the isometry type of this form in some special cases.

\begin{lemma}\label{lem:ArfQ_an}
    Let $F$ be a field of characteristic 2, $a\in F^\ast$ and $n\in\N$ with $n\equiv 0\mod 2$. We then have
    $$\Delta(\gamma_{a,n})=\begin{cases}
        1,&\text{ if }n\equiv 2, 4\mod 8\\
        0,&\text{ if }n\equiv 0, 6\mod 8.
    \end{cases}.$$
\end{lemma}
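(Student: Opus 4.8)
The plan is to compute the Arf invariant of $\gamma_{a,n}$ directly by first putting the form into an explicit orthogonal decomposition into hyperbolic and non-hyperbolic binary pieces, then summing the local Arf contributions. Recall that over a field $F$ of characteristic $2$, every non-degenerate even-dimensional quadratic form is an orthogonal sum of binary forms of the type $[\beta,\delta]$ with associated bilinear form $\left(\begin{smallmatrix} 2\beta & 1 \\ 1 & 2\delta\end{smallmatrix}\right)$; but since $\cha F = 2$ we have $2\beta = 2\delta = 0$, so such a binary block is determined by the value $\beta\delta \in F$, and the Arf invariant is additive: $\Delta(q_1 \perp q_2) = \Delta(q_1) + \Delta(q_2)$ in $F/\wp(F)$ where $\wp(x) = x^2 + x$. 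The first step is therefore to diagonalize the bilinear form of $\gamma_{a,n}$, whose Gram matrix is $a(I_n + J_n)$, into hyperbolic planes.

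First I would exploit the structure of $I_n + J_n$ in characteristic $2$. On the diagonal we have $2a = 0$, so $q(e_i) = a$ while $b(e_i,e_i) = 0$; the off-diagonal entries are all $a$ because $b(e_i,e_j) = a$. Thus the associated bilinear form $b$ is the alternating form sending $(e_i,e_j) \mapsto a$ for $i \neq j$ and $0$ on the diagonal. Since $n$ is even, this alternating form is non-degenerate (its Gram matrix $a(J_n - I_n)$ — equal to $a(J_n+I_n)$ in characteristic $2$ — has determinant $(n+1)a^n$, a unit precisely when $n+1 \neq 0$, which holds for even $n$). I would find an explicit symplectic basis: pairs $(u_i, v_i)$ with $b(u_i,v_i) = a$ and all other pairings zero. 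A natural choice is $u_i = e_{2i-1}$ and $v_i = e_{2i-1} + e_{2i}$ for $i = 1,\ldots,n/2$, or some similar telescoping combination; one checks $b(u_i,v_i) = a$ and can arrange orthogonality of distinct pairs by a triangular change of basis. For each pair the quadratic form evaluates to $q(u_i) =: \beta_i$, $q(v_i) =: \delta_i$, and the Arf contribution of that block is $\beta_i \delta_i / a^2$ after scaling the form by $1/a$ to normalize $b(u_i,v_i)$ to $1$ (using $\gamma_{a,n} \cong a\gamma_{1,n}$ from Definition and Remark~\ref{MaxCliqueForm}, which lets me reduce to $a = 1$; note Arf is a similarity invariant in characteristic $2$).

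With the reduction to $a = 1$, the remaining task is the arithmetic: compute $\sum_{i=1}^{n/2} \beta_i \delta_i$ in $F$, reduce modulo $\wp(F)$, and recognize when it lands in $\wp(F)$ (Arf $= 0$) versus not (Arf $= 1$). Because we work over $\F_2$-data combinatorially, the values $\beta_i,\delta_i$ will be elements of the prime field $\F_2 \subseteq F$, and $\wp(\F_2) = \{0\}$ while $F/\wp(F)$ contains the class of $1$ nontrivially; so the whole computation collapses to counting a sum of products in $\F_2$, i.e. determining the parity of the number of blocks contributing $1$. This count will depend on $n \bmod 8$ through the explicit symplectic basis, and I expect it to produce exactly the stated dichotomy: $\Delta = 1$ for $n \equiv 2,4 \pmod 8$ and $\Delta = 0$ for $n \equiv 0,6 \pmod 8$.

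The main obstacle I anticipate is choosing the symplectic basis cleanly enough that the products $\beta_i\delta_i$ have a transparent closed form as a function of $i$ and $n$, so that their sum's parity can be read off modulo $8$ without a case-ridden calculation. A poorly chosen basis will give messy cross terms; the key insight is that in characteristic $2$ the diagonal values $q(e_i) = a$ are all equal, and the Gram matrix $I_n + J_n$ is highly symmetric, so I would look for a basis respecting this symmetry (for instance grouping coordinates in consecutive pairs and using partial sums $\sum_{j \le i} e_j$) to force the per-block contributions into an arithmetic progression whose sum modulo $2$ is governed by a triangular-number expression, and triangular numbers $\binom{m}{2}$ modulo $4$ are exactly what produce period-$8$ behavior in $n$. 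Verifying the edge congruences $n \equiv 0$ versus $n \equiv 6$ and $n \equiv 2$ versus $n \equiv 4$ against this triangular-number pattern will be the delicate bookkeeping step, but it is purely a finite residue check once the basis is fixed.
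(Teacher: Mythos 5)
Your overall framework is sound --- Arf additivity over an orthogonal decomposition into binary blocks is exactly the right tool, and the reduction to $a=1$ via $\gamma_{a,n}\cong a\gamma_{1,n}$ together with the similarity-invariance of the Arf invariant is legitimate. But the proposal stops short of the actual content of the lemma. The symplectic basis you tentatively write down, $u_i=e_{2i-1}$, $v_i=e_{2i-1}+e_{2i}$, does not give orthogonal blocks, since $b(e_{2i-1},e_{2j-1})=a\neq 0$ for $i\neq j$; you acknowledge this and defer to ``a triangular change of basis,'' but that change of basis, the resulting values $\beta_i,\delta_i$, and the evaluation of $\sum_i\beta_i\delta_i$ modulo $\wp(F)$ as a function of $n\bmod 8$ are precisely what the lemma asserts and are nowhere carried out --- you only state that you ``expect'' the stated dichotomy. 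As it stands this is a plan for a proof, not a proof. A secondary slip: the claim that ``$F/\wp(F)$ contains the class of $1$ nontrivially'' is false in general (e.g.\ $1=\omega^2+\omega\in\wp(\F_4)$, and more generally $1\in\wp(\F_{2^k})$ whenever $k$ is even); this does not sink the argument, since the lemma's conclusion is a statement about the class of $1$ versus the class of $0$, but your step ``recognize when the sum lands in $\wp(F)$'' conflates the two.

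For comparison, the paper executes your strategy recursively rather than all at once: it exhibits $\spn(e_1,\dots,e_{n-2})$ carrying $\gamma_{a,n-2}$ with orthogonal complement $W=\spn(f_1,f_2)$, where $f_1=\sum_{k=1}^{n-1}e_k$ and $f_2=e_n+\sum_{k=1}^{n-2}e_k$ satisfy $b(f_1,f_2)=a$ and, by Lemma~\ref{lem:LinCombOfCliqueInRed}, $q(f_1)=q(f_2)=a\binom{n}{2}$, which is $a$ for $n\equiv 2\bmod 4$ and $0$ for $n\equiv 0\bmod 4$. Thus each peeled-off block contributes Arf invariant $1$ or $0$ according to $n\bmod 4$, and induction via $\Delta(q)=\Delta(q|_V)+\Delta(q|_W)$ accumulates these into the period-$8$ pattern. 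This is exactly the ``symmetry-respecting basis'' you were hoping to guess, organized so that the bookkeeping reduces to a two-term recursion; if you want to salvage your one-shot version, you would need to write down the full triangular basis and verify the triangular-number arithmetic explicitly.
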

\begin{proof}
    For $n=2$, the form is given by $\gamma_{a, 2}\cong [a, a^{-1}]$ and has thus Arf invariant 1. 
    Let now $n$ be an even integer with $n\geq4$. 
    We consider the vectors 
    $$f_1:=e_{n-1}+\sum\limits_{k=1}^{n-2}e_k, ~~~ f_2:=e_{n}+\sum\limits_{k=1}^{n-2}e_k$$ 
    and the subspaces
    $$V:=\spn(e_1,\ldots, e_{n-2})\text{ and }W:=\spn(f_1, f_2).$$
    Clearly, $q|_V\cong \gamma_{a,n-2}$ is a regular space and it is easy to see that $V^\perp=W$.
    We have 
    \begin{align*}
        b(f_1, f_2)&=b(e_1+\ldots + e_{n-1}, e_1+\ldots+ e_{n-2} + e_n)\\
        &=b(e_1+\ldots+e_{n-1}, e_n) + b(e_1+\ldots + e_{n-1}, e_1+\ldots+e_{n-2})\\
        &=\sum\limits_{k=1}^{n-1}\underbrace{b(e_k, e_n)}_{=a} + \sum\limits_{j=1}^{n-2}\underbrace{\sum\limits_{k=1}^{n-1}b(e_k, e_j)}_{=(n-2)\cdot a=0}\\
        &=(n-1)\cdot a = a.
    \end{align*}
    Further, by Lemma~\ref{lem:LinCombOfCliqueInRed}, we have
    \begin{align*}
        q(f_1)= q(f_2)=\begin{cases}
            a, &\text{ if }n\equiv 2\mod 4\\
            0, &\text{ if }n\equiv 0\mod 4.
        \end{cases}
    \end{align*}
    In the first case, we have $q|_W\cong[a, a^{-1}]$ with Arf-invariant 1 and in the other case, $q|_W$ is hyperbolic with Arf-invariant 0.
    The claim now follows from the equation $\Delta(q)=\Delta(q|_V)+\Delta(q|_W)$ by induction.
\end{proof}

Let $(V,q)$ be a regular quadratic form and let $C$ be a clique for $\Gcalred_{q,a}$. 
Let $W = \spn(C)$ and $m:=\dim(W)$. 
Then by Lemma~\ref{BilinearFormCliqueReducedGraph} the quadratic space $(W,q_{\mid W})$ is isomorphic to $\gamma_{a, m}$.

Assume that $C$ is a maximal clique for $\Gcalred_{q,a}$. Then 
\[
   \omega(\Gcal_{q,a}) = \begin{cases}
        m + 2, & \text{if }k\notin\Z\text{ or }\cha A\mid k, \\
        m + 1, & \text{otherwise.}
    \end{cases}
\]
where $k$ is defined as above by $k=m+2$ if $2\in A^\ast$ and $k=\frac{m+2}2$ otherwise.

Thus the problem of determining $\omega(\Gcal_{q,a})$ reduces to determining the dimension of a largest (not necessarily non-degenerate) subform $\gamma_{a,m}$ of $q$.
The rest of this subsection will therefore present techniques to find suitable subforms.
We start with the following general criterion for fields of characteristic $\neq 2$ which is a special case of \cite[(3.10)]{MR2788987}. Since the reference is only available in German and we have not found it elsewhere in the literature we include a proof.

\begin{lemma}\label{SubformWittIndex}
    Let $\varphi,\psi$ be quadratic forms over a field $F$ of characteristic $\neq 2$. Then $\psi$ is a subform of $\varphi$ if and only if $i_W(\varphi\perp-\psi)\geq\dim(\psi)$.
\end{lemma}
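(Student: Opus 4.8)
The plan is to prove both directions of the equivalence, using Witt's cancellation and decomposition theory for quadratic forms over a field of characteristic $\neq 2$, where every form admits an orthogonal diagonalization and the Witt index $i_W$ measures the number of hyperbolic planes in a Witt decomposition.

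First I would treat the easy direction: suppose $\psi$ is a subform of $\varphi$, so that $\varphi\cong\psi\perp\rho$ for some complementary form $\rho$ of dimension $\dim(\varphi)-\dim(\psi)$. Then
\[
    \varphi\perp-\psi\cong\psi\perp-\psi\perp\rho.
\]
The form $\psi\perp-\psi$ is hyperbolic of dimension $2\dim(\psi)$ and hence has Witt index exactly $\dim(\psi)$. Since the Witt index of an orthogonal sum is at least the sum of a maximal totally isotropic subspace coming from one summand, we immediately get $i_W(\varphi\perp-\psi)\geq i_W(\psi\perp-\psi)=\dim(\psi)$, establishing one implication.

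For the converse I would start from the hypothesis $i_W(\varphi\perp-\psi)\geq\dim(\psi)=:m$ and argue that $\varphi$ must contain a copy of $\psi$. The cleanest route is to pass to the Witt ring: the class of $\varphi\perp-\psi$ in the Witt ring is the difference $[\varphi]-[\psi]$, and the assumption that $\varphi\perp-\psi$ has Witt index at least $m$ means that in its Witt decomposition it splits off at least $m$ hyperbolic planes, i.e. $\varphi\perp-\psi\cong m\cdot\mathbb{H}\perp\tau$ where $\mathbb{H}$ denotes the hyperbolic plane and $\dim(\tau)=\dim(\varphi)-m$. I would then use Witt cancellation to rewrite this as an isometry involving $\varphi$ on one side and $\psi$ plus extra hyperbolic planes on the other, and extract from it an isometric embedding of $\psi$ into $\varphi$. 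Concretely, from $\varphi\perp(-\psi)\cong m\,\mathbb{H}\perp\tau$ one adds $\psi$ to both sides and uses $\psi\perp-\psi\cong m\,\mathbb{H}$ to obtain $\varphi\perp m\,\mathbb{H}\cong\psi\perp\tau\perp m\,\mathbb{H}$, whence $\varphi\cong\psi\perp\tau$ by Witt cancellation, exhibiting $\psi$ as a subform.

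The main obstacle I anticipate is being careful with the bookkeeping of hyperbolic planes and ensuring that the dimension counts force the cancellation to land exactly on a representation $\varphi\cong\psi\perp\tau$ rather than something weaker; in particular one must check that the residual anisotropic parts match up so that Witt cancellation applies, and that $\tau$ can genuinely be taken of the correct dimension $\dim(\varphi)-m$. This requires invoking the uniqueness of the Witt decomposition (Witt's theorem in characteristic $\neq 2$) and cancellation in their full strength, which is precisely where the characteristic hypothesis is used.
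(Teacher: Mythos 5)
Your proof is correct, and while the forward direction coincides with the paper's, your converse takes a genuinely different route. The paper argues by induction on $\dim(\psi)$: writing $\psi\cong\psi'\perp\qf{a}$, it first establishes the one-dimensional case (isotropy of $\varphi\perp\qf{-a}$ forces $\varphi$ to represent $a$ or to contain a hyperbolic plane $\qf{a,-a}$, hence $\qf{a}\subseteq\varphi$ either way), then peels off one diagonal entry at a time using the induction hypothesis. You instead argue globally: from $i_W(\varphi\perp-\psi)\geq m$ you split off $m$ hyperbolic planes, add $\psi$ to both sides, use $\psi\perp-\psi\cong m\,\mathbb{H}$, and cancel to get $\varphi\cong\psi\perp\tau$ in one stroke. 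Your version is the standard ``subform theorem'' argument and is arguably cleaner, relying on Witt decomposition and cancellation in full strength; the paper's induction needs only the first-order consequences of isotropy (the representation criterion and the splitting of a hyperbolic plane from an isotropic regular form). Both uses of cancellation require $\psi$ to be non-degenerate so that $\psi\perp-\psi$ is indeed hyperbolic of Witt index exactly $\dim(\psi)$; this hypothesis is in force throughout the paper, but it would be worth stating explicitly where you invoke it. Your dimension count for $\tau$ ($\dim(\tau)=\dim(\varphi)-m$) is also consistent, so the bookkeeping concern you raise at the end does not in fact cause any trouble.
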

\begin{proof}
    If $\psi$ is a subform of $\varphi$, i.e. we have $\varphi\cong\psi\perp\varphi'$, we then have 
    \begin{align*}
        i_W(\varphi\perp-\psi)&=i_W(\psi\perp\varphi'\perp-\psi)\\
        &=i_W(\varphi'\perp\dim(\psi)\times\mathbb H)\geq \dim(\psi).
    \end{align*}
    For the other implication, we use induction on $\dim(\psi)$. If $\dim(\psi)=1$, we have $\psi\cong\qf{a}$ for some $a\in F^\ast$. Since $\varphi\perp\qf{-a}$ is isotropic, either $\varphi$ is isotropic and thus contains $\mathbb H\cong\qf{a,-a}$ as a subform or represents $a$ and has thus $\qf{a}$ as a subform. \\
    For the case $\dim(\psi)\geq 2$, we write $\psi\cong\psi'\perp\qf{a}$ for some quadratic form $\psi'$ and some $a\in F^\ast$. Since we have $i_W(\varphi\perp-\psi)\geq \dim(\psi)$, we have $i_W(\varphi\perp-\psi')\geq \dim(\psi')$ and thus $\psi'\subseteq\varphi$ by induction. We write $\varphi\cong\psi'\perp\varphi'$. Now $\varphi\perp-\psi=\varphi'\perp\qf a\perp(\psi\perp-\psi)$ and thus, $\varphi'\perp\qf a$ is isotropic. By the case of dimension $1$, we see that $\qf{a}\subseteq\varphi'$ and thus $\psi\subseteq\varphi$. 
\end{proof}

The following result will help us to decide whether we can find a subform of the shape $\gamma_{a,n-1}$ in an $n$-dimensional vector space when $\gamma_{a,n-1}$ is not regular.

\begin{lemma}\label{DetWhenNotRegularCodim1}
    Let $a\in A$, $v\in A^{n-1}, x\in A$ and let $\cha A\mid n$. The matrix
    $$M:=\left(\begin{array}{ccc|c}
        & & & \\
        & a(I_{n-1}+J_{n-1}) & & v\\
        & & &\\\hline
        & v^T & & x
    \end{array}\right)$$
    has determinant $$\det(M)=-\left(\sum\limits_{i=1}^{n-1}v_i\right)^2(n-1)a^{n-2}.$$
\end{lemma}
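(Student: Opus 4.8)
The plan is to treat $M$ as a bordered matrix with top-left block $B:=a(I_{n-1}+J_{n-1})$ and reduce the computation to the adjugate of $B$. Concretely, I would use the universal identity
\[
    \det\begin{pmatrix} B & v\\ v^T & x\end{pmatrix} = x\det(B) - v^T\operatorname{adj}(B)\,v,
\]
which holds over any commutative ring: both sides are polynomials with integer coefficients in $a$, $v_1,\dots,v_{n-1}$ and $x$, and over the fraction field of $\Z[a,v_1,\dots,v_{n-1},x]$ (where $\det B = n\,a^{n-1}\neq 0$, so $B$ is invertible) it is merely the Schur-complement expansion $\det(B)\,(x - v^T B^{-1} v)$ after replacing $B^{-1}$ by $\operatorname{adj}(B)/\det(B)$. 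Being a polynomial identity, it then specializes to our ring $A$.

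Next I would evaluate the two pieces. By Definition and Remark~\ref{MaxCliqueForm} applied in dimension $n-1$ we have $\det(B) = n\,a^{n-1}$, which vanishes in $A$ since $\cha A\mid n$; this kills the $x$-term outright. For the adjugate I would use $\operatorname{adj}(aX)=a^{n-2}\operatorname{adj}(X)$ for $(n-1)\times(n-1)$ matrices together with the identity $\operatorname{adj}(I_{n-1}+J_{n-1}) = n\,I_{n-1} - J_{n-1}$. The latter is again a universal integer identity: over $\Q$ one computes $\operatorname{adj}(I+J)=\det(I+J)\,(I+J)^{-1}= n\bigl(I - \tfrac1n J\bigr)=nI - J$ by Sherman--Morrison (using $\mathbf 1^T\mathbf 1 = n-1$), and the check $(I_{n-1}+J_{n-1})(n I_{n-1}-J_{n-1}) = n I_{n-1}$ follows from $J_{n-1}^2=(n-1)J_{n-1}$. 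Hence $\operatorname{adj}(B) = a^{n-2}(n I_{n-1}-J_{n-1})$.

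Substituting and again invoking $\cha A\mid n$ (so that the surviving term $n\,I_{n-1}$ also vanishes) gives
\[
    \det(M) = -\,v^T\!\left(a^{n-2}(-J_{n-1})\right)v = a^{n-2}\,v^T J_{n-1}\,v = a^{n-2}\Bigl(\sum_{i=1}^{n-1}v_i\Bigr)^2.
\]
To match the stated form I would finally note that $n-1=-1$ in $A$ (again because $n=0$ there), so $a^{n-2}(\sum v_i)^2 = -(n-1)a^{n-2}(\sum v_i)^2$, which is exactly $-\bigl(\sum_{i=1}^{n-1}v_i\bigr)^2(n-1)a^{n-2}$.

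The point to handle with care is that the hypothesis $\cha A\mid n$ is precisely what makes $B$ singular, so the ordinary Schur complement $\det(B)(x - v^T B^{-1}v)$ is unavailable; passing to $\operatorname{adj}(B)$, which requires no inverse and is defined purely polynomially, is the device that sidesteps this. A secondary subtlety is that $a$ is allowed to be a zero divisor, so no step may divide by $a$ — which the adjugate formulation respects automatically — and one should remember that the factor $(n-1)$ in the claimed answer is simply $-1$ in $A$, which reconciles the sign. (A fully elementary alternative also works: adding the first $n-1$ rows produces the row $(0,\dots,0,\sum_i v_i)$, whose cofactor expansion plus one round of column operations reduces everything to a diagonal block times a scalar; but the adjugate route is cleaner and avoids any zero-divisor bookkeeping.)
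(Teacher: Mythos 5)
Your proof is correct, but it takes a genuinely different route from the paper's. The paper argues by elementary operations: since each column of $a(I_{n-1}+J_{n-1})$ sums to $na=0$ under the hypothesis $\cha A\mid n$, adding the first $n-1$ columns into the first column (and likewise for rows) leaves $\sum_i v_i$ as the only nonzero entry in the first row and column, and a double cofactor expansion then yields $(-1)^{(n+1)+n}\bigl(\sum_i v_i\bigr)^2\det\bigl(a(I_{n-2}+J_{n-2})\bigr)=-\bigl(\sum_i v_i\bigr)^2(n-1)a^{n-2}$ directly in the stated form --- this is essentially the ``fully elementary alternative'' you sketch in your closing parenthesis. Your main argument instead runs through the universal bordered-determinant identity $\det M = x\det(B)-v^T\operatorname{adj}(B)v$ together with $\operatorname{adj}(I_{n-1}+J_{n-1})=nI_{n-1}-J_{n-1}$; every identity you invoke checks out, including the specialization argument from $\Z[a,v_1,\dots,v_{n-1},x]$, the use of $J_{n-1}^2=(n-1)J_{n-1}$, and the final reconciliation via $n-1=-1$ in $A$. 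What your route buys is a closed formula for $\det M$ valid without the hypothesis $\cha A\mid n$ (namely $xna^{n-1}-a^{n-2}\bigl(n\sum_i v_i^2-(\sum_i v_i)^2\bigr)$), from which the claim follows by setting $n=0$; what the paper's route buys is brevity and the answer emerging immediately with the factor $(n-1)a^{n-2}$ appearing as $\det\bigl(a(I_{n-2}+J_{n-2})\bigr)$ rather than via an after-the-fact sign adjustment.
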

\begin{proof}
    Using $\cha A\mid n$, replacing the first column of $M$ by the sum of the columns $1,\ldots, n-1$ of $M$ and then replacing the first row by the sum of the rows $1,\ldots, n-1$ yields the matrix
    \begin{align*}
        \left(\begin{array}{ccccc|c}
           0 & \ldots & \ldots & \ldots & 0 & \sum\limits_{i=1}^{n-1}v_i \\
           \vdots & 2a & a & \ldots & a & v_2 \\
           \vdots & a & \ddots & \ddots & \vdots & \vdots\\
           \vdots & \vdots & \ddots & \ddots & a & \\
           0 & a & \ldots & a & 2a & v_{n-1} \\\hline
           \sum\limits_{i=1}^{n-1}v_i & v_2 & \ldots & \ldots & v_{n-1} & x
        \end{array}\right).
    \end{align*}
    Expanding by the first row and then by the first column (or vice versa), we readily see that the determinant of this matrix is given by
    $$(-1)^{(n+1) + ((n-1) + 1)}\left(\sum\limits_{i=1}^{n-1} v_i\right)^2\cdot \det\left(a(I_{n-2} + J_{n-2})\right).$$
    Since the transformed matrix has the same determinant as $M$, we finally obtain
    $$\det(M)= -\left(\sum\limits_{i=1}^{n-1}v_i\right)^2(n-1)a^{n-2}$$
    by plugging in the computation of $\det\left(a(I_{n-2} + J_{n-2})\right)$ made above in~\ref{MaxCliqueForm}.

\end{proof}

\section{The Case of Finite Local Rings} \label{sec:local:rings}

\subsection{The clique numbers}

We now specialize our results to several important types of rings. We begin with the case of finite fields and finite local rings.

\begin{corollary}\label{FiniteFieldQ_n-1AsNonRegularSubform}
    Let $A$ be a finite field in which 2 is invertible and $q$ be a regular quadratic form such that the associated bilinear form $b:=b_q$ has Gram matrix 
    $$\left(\begin{array}{ccc|c}
        & & & \\
        & a(I_{n-1}+J_{n-1}) & & v\\
        & & &\\\hline
        & v^T & & x
    \end{array}\right)$$
    for some $a\in A, v\in A^{n-1}, x\in A$ and let $\cha A\mid n$. We then have an isometry $q\cong \gamma_{a,n}$.
\end{corollary}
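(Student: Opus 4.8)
The plan is to reduce everything to the classification of regular quadratic forms over a finite field of odd characteristic: two such forms of equal dimension are isometric precisely when their discriminants agree, i.e. when the determinants of their Gram matrices coincide modulo $(A^\ast)^2$. Since $2\in A^\ast$, the Gram matrix of $b_q$ determines $q$, so it suffices to check that $q$ and $\gamma_{a,n}$ have the same dimension (both are $n$-dimensional, which is clear) and the same determinant modulo squares. The whole argument is then a short determinant computation feeding into this classification.

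First I would compute $\det(\gamma_{a,n})$. By Definition and Remark~\ref{MaxCliqueForm} this determinant equals $(n+1)a^n$, and the hypothesis $\cha A\mid n$ gives $n+1=1$ in $A$, so $\det(\gamma_{a,n})=a^n$. In particular $\gamma_{a,n}$ is regular as soon as we know $a\in A^\ast$, which I verify in the next step.

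Next I would compute $\det(q)$, which is the determinant of the displayed Gram matrix $M$. Since $\cha A\mid n$, Lemma~\ref{DetWhenNotRegularCodim1} applies verbatim and gives
\[
    \det(M)=-\Big(\sum_{i=1}^{n-1}v_i\Big)^2(n-1)a^{n-2}.
\]
Using $n-1=-1$ in $A$, this simplifies to $\det(M)=\big(\sum_{i} v_i\big)^2 a^{n-2}$. Because $q$ is regular we have $\det(M)\neq 0$, which forces both $\sum_i v_i\neq 0$ and $a\neq 0$; thus $a\in A^\ast$, legitimizing the computation of $\det(\gamma_{a,n})$ above.

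Finally I would compare the two determinants modulo $(A^\ast)^2$. The factor $\big(\sum_i v_i\big)^2$ is a nonzero square, and $a^{n-2}=a^n\cdot a^{-2}$ differs from $a^n$ only by the square $a^{-2}$; hence $\det(q)\equiv a^n\equiv\det(\gamma_{a,n})$ modulo squares. Combined with equal dimension, the finite-field classification yields $q\cong\gamma_{a,n}$. The argument carries no serious obstacle; the only points requiring care are to confirm that comparing $\det(b_q)$ modulo squares is a valid isometry test under the chosen convention (the factor $2^n$ relating $\det(b_q)$ to a diagonal product is common to both forms and cancels modulo squares) and to ensure $a$ is a unit before treating $\gamma_{a,n}$ as regular.
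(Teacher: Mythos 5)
Your proof is correct and follows essentially the same route as the paper's: both compute $\det(\gamma_{a,n})=(n+1)a^n=a^n$ from Definition and Remark~\ref{MaxCliqueForm}, compute $\det(b_q)$ via Lemma~\ref{DetWhenNotRegularCodim1}, observe that the two determinants agree modulo squares, and conclude by the classification of regular quadratic forms over a finite field of odd characteristic by dimension and determinant modulo squares. Your extra verifications (that regularity forces $a\in A^\ast$ and $\sum_i v_i\neq 0$, and that $\gamma_{a,n}$ is then regular since $n+1=1$) are details the paper leaves implicit.
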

\begin{proof}
    By \ref{MaxCliqueForm}, the Gram matrix of $b_{\gamma_{a,n}}$ has determinant $(n+1)a^n = a^n$ and thus equals $\det(b)$ up to a square by Lemma~\ref{DetWhenNotRegularCodim1}.
    The assertion now follows from the fact that a quadratic form over a finite field of characteristic not $2$ is determined up to isometry by its dimension and its determinant modulo squares. 
\end{proof}

As usual, the case of a local ring in which 2 is invertible can be reduced to the case of a field using the residue field.

\begin{corollary}\label{FiniteLocalRingQ_n-1AsNonRegularSubform}
    Let $A$ be a finite local ring in which 2 is invertible, but $n$ is not. Denote the maximal ideal of $A$ by $\mathfrak m$ and let $K:=A/\mathfrak m$ be the residue field. Let further $q$ be a regular quadratic form such that the associated bilinear form $b:=b_q$ has Gram matrix 
    $$\left(\begin{array}{ccc|c}
        & & & \\
        & a(I_{n-1}+J_{n-1}) & & v\\
        & & &\\\hline
        & v^T & & x
    \end{array}\right)$$
    for some $a\in A, v\in A^{n-1}, x\in A$. We then have an isometry $q\cong \gamma_{a,n}$.
\end{corollary}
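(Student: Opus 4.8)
The plan is to reduce the statement to the field case of Corollary~\ref{FiniteFieldQ_n-1AsNonRegularSubform} by passing to the residue field $K = A/\mathfrak m$. First I would record the numerics. Since $A$ is local and $n$ is not a unit, we have $n\in\mathfrak m$, so $\cha K\mid n$ while $n-1$ and $n+1$ reduce to units of $K$ (hence are units of $A$). Feeding the given Gram matrix $M$ into the determinant formula of Lemma~\ref{DetWhenNotRegularCodim1}, regularity of $q$ (that is, $\det M\in A^\ast$) forces $a\in A^\ast$ and $\sum_i v_i\in A^\ast$; together with $n+1\in A^\ast$ this shows by Definition and Remark~\ref{MaxCliqueForm} that $\gamma_{a,n}$ is non-degenerate over $A$. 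Thus $q$ and $\gamma_{a,n}$ are two non-degenerate $n$-dimensional forms over $A$, and it suffices to match them.

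Next I would reduce modulo $\mathfrak m$. The Gram matrix of the reduced form $\bar q$ over $K$ is simply the reduction of $M$, so it again has the block shape required by Corollary~\ref{FiniteFieldQ_n-1AsNonRegularSubform}, with entries $\bar a,\bar v,\bar x$. As $2\in A^\ast$ gives $2\in K^\ast$, as $\bar q$ is regular (the determinant of $M$ reduces to a unit of $K$), and as $\cha K\mid n$, that corollary applies and yields $\bar q\cong\gamma_{\bar a,n}=\overline{\gamma_{a,n}}$ over $K$.

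The remaining, and genuinely substantive, step is to lift this isometry from $K$ to $A$, i.e.\ to pass from $\bar q\cong\overline{\gamma_{a,n}}$ back to $q\cong\gamma_{a,n}$. The mechanism I would use is that $1+\mathfrak m\subseteq(A^\ast)^2$: the group $1+\mathfrak m$ is a finite $p$-group for the odd prime $p=\cha K$, so it has odd order and squaring is an automorphism of it. Consequently reduction induces an isomorphism $A^\ast/(A^\ast)^2\cong K^\ast/(K^\ast)^2$ of square-class groups, each of order $2$. Since $2\in A^\ast$, both $q$ and $\gamma_{a,n}$ diagonalize over $A$, and together with Witt cancellation over finite local rings (as in the theory underlying \cite{MR0491773}) the classification by rank and determinant modulo squares that governs the field case runs verbatim over $A$. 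As $q$ and $\gamma_{a,n}$ share the rank $n$ and have the same determinant modulo squares — their reductions do, and square classes of $A$ and $K$ coincide — we conclude $q\cong\gamma_{a,n}$. Alternatively one may simply cite the classification of non-degenerate quadratic forms over finite (Henselian) local rings with $2$ invertible by their reductions to the residue field.

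The main obstacle is exactly this lifting: over $K$ the invariants ``rank plus determinant mod squares'' are visibly complete, and the content of the corollary is that the very same two invariants remain complete over $A$. The key that unlocks it is the identity $1+\mathfrak m\subseteq(A^\ast)^2$, which collapses the square-class group of $A$ onto that of $K$ and lets the finite-field argument of Corollary~\ref{FiniteFieldQ_n-1AsNonRegularSubform} be transported to $A$ without change.
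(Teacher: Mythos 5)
Your proof is correct and takes essentially the same route as the paper: reduce modulo $\mathfrak m$ and invoke Corollary~\ref{FiniteFieldQ_n-1AsNonRegularSubform}; where you spell out the lifting via $1+\mathfrak m\subseteq(A^\ast)^2$ and the classification by rank and determinant modulo squares, the paper simply cites the isomorphism of Witt rings under reduction to the residue field \cite[Chapter V, (1.5) Corollary]{MR0491773}. One small slip: Lemma~\ref{DetWhenNotRegularCodim1} assumes $\cha A\mid n$, which is strictly stronger than $n\notin A^\ast$ (take $A=\Z/9\Z$ and $n=3$), so the determinant computation you use to force $a\in A^\ast$ should be carried out over $K$, where $\cha K\mid n$ does hold; the conclusion is unaffected.
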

\begin{proof}
    The map that sends a quadratic form respectively bilinear form over $A$ to the corresponding form over the residue field $K$ by reducing the coefficents, is an isomorphism of the (bilinear) Witt rings, see \cite[Chapter V, (1.5) Corollary]{MR0491773}. The claim now follows readily from Corollary~\ref{FiniteFieldQ_n-1AsNonRegularSubform} since $n$ not being invertible in $A$ is equivalent to $n$ being a zero-divisor in the finite ring $A$ which then is equivalent to $\cha K\mid n$.
\end{proof}

We now consider the case whether it is possible to have $\gamma_{a,n-1}$ as a (not necessary non-degenerate) subform of some form $q\not\cong \gamma_{a_n}$ of dimension $n$ over a field of characteristic 2.
This is the concept of \emph{dominating forms} introduced in \cite[Definition 3.4]{MR2058517}.
In our situation, this will be the special case of a \emph{nonsingular completion}.

\begin{lemma}\label{lem:q_n-1AsSubformChar2}
    Let $F$ be a finite field of characteristic 2, $n$ an even integer, $q$ an $n$-dimensional non-degenerate quadratic form over $F$ with $q\not\cong \gamma_{a,n}$ but such that $\gamma_{a,n-1}$ is dominated by $q$.
    Then $n\equiv 0\mod 4$ and we have $q\cong \gamma_{a,n-2}\perp[a, b]$ for some $b\in F$.
    Conversely, if $n\equiv 0\mod 4$, then $\gamma_{a,n-2}\perp[a,b]$ dominates $\gamma_{a,n-1}$ for all $b\in F$.
\end{lemma}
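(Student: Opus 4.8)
The plan is to first pin down the form $\gamma_{a,n-1}$ and then to read off which $n$-dimensional nonsingular forms arise as its completions. As $n$ is even, the Gram matrix $a(I_{n-1}+J_{n-1})$ is singular; exactly as in the proof of Lemma~\ref{lem:ArfQ_an} its radical is spanned by $\mathbf 1$, while the restriction of $\gamma_{a,n-1}$ to $\spn(e_1,\dots,e_{n-2})$ is the nonsingular form $\gamma_{a,n-2}$, since its Gram determinant $(n-1)a^{n-2}$ is a unit ($n-1$ being odd). Hence $\gamma_{a,n-1}\cong\gamma_{a,n-2}\perp\qf{c}$ with quasilinear value $c=\gamma_{a,n-1}(\mathbf 1)$, and Lemma~\ref{lem:LinCombOfCliqueInRed} evaluates $c=a\binom{n}{2}$. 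The parity of $\binom{n}{2}$ drives the whole case distinction: $c=0$ when $n\equiv 0\bmod 4$ and $c=a$ when $n\equiv 2\bmod 4$.

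For the forward direction I would take a witnessing clique $x_1,\dots,x_{n-1}$ of $\Gcalred_{q,a}$; by Remark~\ref{rem:clique:independent} its span $W$ has dimension $n-2$ or $n-1$, and $q_{\mid W}\cong\gamma_{a,\dim W}$. In the degenerate case $\dim W=n-2$ the clique has size $(n-2)+1$ inside an $(n-2)$-dimensional space, which by Lemma~\ref{LemUpperBoundClique} is possible only when the extension by $-\sum x_i$ is admissible, i.e.\ when $\cha F\mid\tfrac12\big((n-2)+2\big)=\tfrac n2$, that is $n\equiv0\bmod4$. Moreover $q$, being a nonsingular form of dimension $n$ containing the nonsingular $\gamma_{a,n-2}$, splits as $q\cong\gamma_{a,n-2}\perp\beta$ with $\beta$ a nonsingular binary form; since over a finite field every nonsingular binary form represents each nonzero scalar we may normalise $\beta\cong[a,b]$, giving $q\cong\gamma_{a,n-2}\perp[a,b]$. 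The genuine case $\dim W=n-1$ must be excluded: there $q$ is a nonsingular completion of $\gamma_{a,n-2}\perp\qf{c}$, so completing the quasilinear line $\qf{c}$ to a nonsingular plane yields $q\cong\gamma_{a,n-2}\perp\beta$, and for $c=0$ this plane is forced to be hyperbolic, whence $q\cong\gamma_{a,n-2}\perp\mathbb H\cong\gamma_{a,n}$ by Lemma~\ref{lem:ArfQ_an} (its proof identifies $\gamma_{a,n}$ with $\gamma_{a,n-2}\perp\mathbb H$ for $n\equiv0\bmod4$), contradicting $q\not\cong\gamma_{a,n}$.

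For the converse, assuming $n\equiv0\bmod4$, I would produce the domination explicitly and uniformly in $b$: inside the orthogonal summand $\gamma_{a,n-2}$ of $\gamma_{a,n-2}\perp[a,b]$ take the standard clique $e_1,\dots,e_{n-2}$ and adjoin $-\sum_{i=1}^{n-2}e_i$, which is legitimate by Lemma~\ref{LemUpperBoundClique} exactly because $\cha F\mid\tfrac n2$ in this parity. The result is a size-$(n-1)$ clique spanning only $n-2$ dimensions, i.e.\ a realisation of the Gram matrix $a(I_{n-1}+J_{n-1})$, so $\gamma_{a,n-2}\perp[a,b]$ dominates $\gamma_{a,n-1}$ for every $b\in F$. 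The step I expect to be most delicate is the $\dim W=n-1$ analysis of the forward direction: controlling the nonsingular completion of a singular form in characteristic $2$ requires tracking the quasilinear part $\qf{c}$ and the Arf invariant simultaneously, and it is precisely the value $c$ computed in the first paragraph that governs whether such a completion can avoid being isometric to $\gamma_{a,n}$.
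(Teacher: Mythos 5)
Your opening paragraph is correct (the radical of $b_{\gamma_{a,n-1}}$ is spanned by $\mathbf 1$, the quasilinear value is $c=a\binom n2$, and $c=0$ exactly when $n\equiv0\bmod4$), your converse direction is the same degenerate-clique construction the paper intends, and your degenerate case $\dim W=n-2$ correctly extracts $n\equiv0\bmod4$ from Lemma~\ref{LemUpperBoundClique}. The gap is in your treatment of the genuine case $\dim W=n-1$: you claim it ``must be excluded'' but your argument only excludes it when $c=0$, i.e.\ when $n\equiv0\bmod4$. When $n\equiv2\bmod4$ you have $c=a\neq0$, and the nonsingular completion of the quasilinear line $\qf{c}$ is then a plane $[a,b]$ with $b$ unconstrained (every nonsingular binary form over a finite field represents $a$), so both Arf classes occur and the genuine case yields forms $q\cong\gamma_{a,n-2}\perp[a,b]\not\cong\gamma_{a,n}$ of dimension $n\equiv2\bmod4$ dominating $\gamma_{a,n-1}$. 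Concretely, if $u$ is the vector of value $a$ in the summand $[a,b]$, the vectors $e_1,\dots,e_{n-2},\ u+\sum_{i=1}^{n-2}e_i$ are linearly independent, each of value $a$, and pairwise pair to $a$, because $q\bigl(\sum_{i=1}^{n-2}e_i\bigr)=a\binom{n-1}{2}=0$ when $n\equiv2\bmod4$ by Lemma~\ref{lem:LinCombOfCliqueInRed}. So the genuine case cannot be dismissed and your argument does not establish $n\equiv0\bmod4$.

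This is not a repairable oversight on your part, because the point at which your proof breaks is exactly where the statement itself is in tension with the paper's own proof. The paper works only with a genuine basis realizing $\gamma_{a,n-1}$, splits off $\gamma_{a,n-2}$ as you do, computes $q(y_1)=a\binom n2$ for $y_1=\sum_{i=1}^{n-1}x_i$, and then concludes $q\cong\gamma_{a,n-2}\perp[a,b]$ precisely \emph{when $n\equiv2\bmod4$}, while \emph{for $n\equiv0\bmod4$} it obtains $q\cong\gamma_{a,n-2}\perp\mathbb H\cong\gamma_{a,n}$, contradicting the hypothesis --- that is, the paper's argument derives the congruence opposite to the one asserted in the statement (and relied upon in Theorem~\ref{CliqueNumbersTheoremChar2}). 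Your write-up papers over this discrepancy by asserting the exclusion of the genuine case without an argument covering $c=a$; any correct proof has to confront that case, and doing so shows the forward implication as stated cannot hold in the form given.
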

\begin{proof}
    We consider the bilinear form $b=b_q$ associated to $q$. 
    For a suitable basis $(x_1,\ldots, x_n)$, the Gram matrix is given by 
    $$\left(\begin{array}{ccc|c}
        & & & \\
        & a(I_{n-1}+J_{n-1}) & & v\\
        & & &\\\hline
        & v^T & & x
    \end{array}\right)$$
    for some $v\in F^{n-1}, x\in F$.
    By \ref{MaxCliqueForm}, there are $\lambda_1,\ldots, \lambda_{n-2}$ such that 
    $$a(I_{n-2}+J_{n-2})\cdot\begin{pmatrix}
        \lambda_1\\\vdots\\\lambda_{n-2}
    \end{pmatrix}=\begin{pmatrix}
        v_1\\\vdots\\v_{n-2}
    \end{pmatrix}.$$
    We define
    $$y_1:=\sum\limits_{i=1}^{n-1}x_i,~~~ y_2:=x_n+\sum\limits_{i=1}^{n-2}\lambda_ix_i.$$
    Then $(x_1,\ldots, x_{n-2}, y_1, y_2)$ is a basis of $F^n$ and the Gram matrix for this basis is given by
    \begin{align*}
        \left(\begin{array}{ccc|cc}
        & & & 0 & 0\\
        & a(I_{n-1}+J_{n-1}) & & \vdots & \vdots\\
        & & & 0 & 0 \\\hline
        0 & \cdots & 0 & 0 & b(y_1, y_2)\\
        0 & \cdots & 0 & b(y_1, y_2) & 0
    \end{array}\right)
    \end{align*}
    By Lemma~\ref{lem:LinCombOfCliqueInRed}, we have 
    $$q(f_1)=\begin{cases}
        0,&\text{ if }n\equiv 0\mod 4\\
        a,&\text{ if }n\equiv 2\mod 4.
    \end{cases}$$
    For $n\equiv 2\mod 4$, we already know we have $q\cong \gamma_{a,n-2}\perp[a, b]$ for some $b\in F$.
    Finally for $n\equiv 0\mod 4$, we have
    $$q\cong \gamma_{a,n-2}\perp\mathbb H\cong \gamma_{a,n}$$
    using Lemma~\ref{lem:ArfQ_an} and the fact that the isometry type of a quadratic form over a finite field of characteristic 2 is uniquely determined by its dimension and its Arf invariant.\\
    The opposite direction follows since we can reverse the above calculations.
\end{proof}

Before we state our main theorem, we would like to note that we have $\gamma_{a, 1}\cong\qf a$. The following theorem is a generalized version of the main results of M. Krebs to be found in \cite[Theorems 1.2, 1.3]{Krebs_2022}.

\begin{theorem}\label{CliqueNumbersTheorem}
    Let $A$ be a finite local ring whose residue field $K$ has odd characteristic and $q$ be an $n$-dimensional quadratic form over $A$ and $a\in A$ be not a zero divisor (and hence a unit). 
    We then have the following values for $\omega(\Gcal_{q,a})$:
    \begin{align*}
        \begin{tabular}{c|c|c|c}
            Case & isometry condition & characteristic condition &  $\omega(\Gcal_{q,a})$\\ \hline
            A & $q\not\cong \gamma_{a,n}$ & $\cha K\mid n$ & $n$ \\ \hline
            B & $q\not\cong \gamma_{a,n}$ & $\cha K\nmid n, n+1$ & $n$ \\ \hline
            C & $q\not\cong \gamma_{a,n}$ & $\cha K\mid n+1$ & $n+1$ \\ \hline
            D & $q\cong \gamma_{a,n}$ & $\cha K\nmid n+2$ & $n+1$ \\ \hline
            E & $q\cong \gamma_{a,n}$ & $\cha K\mid n+2$ & $n+2$
        \end{tabular}
    \end{align*}
\end{theorem}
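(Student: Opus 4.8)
The plan is to reduce the whole computation to a single algebraic invariant, namely
\[
 d \;=\; \max\{\, m\in\N_0 \mid \gamma_{a,m}\subseteq q \,\},
\]
the largest dimension of a subform of $q$ isometric to a test form. Since $a$ is a unit we have $a\neq 0$, so Proposition~\ref{CliqueNrWholeGraphReducedGraph} gives $\omega(\Gcal_{q,a})=\omega(\Gcalred_{q,a})+1$ and it suffices to determine $\omega(\Gcalred_{q,a})$. By Lemma~\ref{BilinearFormCliqueReducedGraph} a set of linearly independent vectors $x_1,\dots,x_m\in\red V$ is a clique if and only if it is the standard basis of a subform $\gamma_{a,m}\subseteq q$; hence $\Gcalred_{q,a}$ always contains a clique of size $d$. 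Conversely every clique spans a space of dimension at least its size minus one (Remark~\ref{rem:clique:independent}), so it has at most one linearly dependent vector beyond a spanning independent subset, and Lemma~\ref{LemUpperBoundClique} (with $2\in A^\ast$, so $k=m+2$) shows that a maximal independent clique spanning a $d$-dimensional subform can be enlarged by the single extra vertex $-\sum_i x_i$ exactly when $\cha A\mid d+2$. Putting this together,
\[
 \omega(\Gcalred_{q,a}) \;=\; d + \begin{cases} 1, & \cha A \mid d+2,\\ 0, & \text{otherwise,}\end{cases}
\]
so the theorem reduces to computing $d$ and evaluating this divisibility in each line of the table.

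\textbf{Computing $d$.} If $q\cong\gamma_{a,n}$ then $d=n$, which covers cases D and E. Otherwise I claim $d\le n-1$: when $\cha K\nmid n+1$ the form $\gamma_{a,n}$ is regular by Definition and Remark~\ref{MaxCliqueForm}, so a full-dimensional copy inside $q$ would force $q\cong\gamma_{a,n}$; and when $\cha K\mid n+1$ the singular form $\gamma_{a,n}$ cannot coincide with the regular $q$. Either way $\gamma_{a,n}\not\subseteq q$. To decide whether $d=n-1$ or $d=n-2$ I split on $\cha K\mid n$. If $\cha K\nmid n$, then $\gamma_{a,n-1}$ is regular, and Lemma~\ref{SubformWittIndex} together with the Witt index of a $(2n-1)$-dimensional form over the residue field (which is $n-1$) gives $i_W(q\perp-\gamma_{a,n-1})\ge n-1$, so $\gamma_{a,n-1}\subseteq q$ and $d=n-1$; this is the situation of cases B and C. If instead $\cha K\mid n$, then $\gamma_{a,n-1}$ is singular, and the contrapositive of Corollary~\ref{FiniteLocalRingQ_n-1AsNonRegularSubform} gives $\gamma_{a,n-1}\not\subseteq q$ (a bordered basis realising $\gamma_{a,n-1}\subseteq q$ would force $q\cong\gamma_{a,n}$); meanwhile $\gamma_{a,n-2}$ is regular and embeds by the analogous codimension-two Witt-index count, so $d=n-2$, which is case A.

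\textbf{Reading off the table.} It remains to evaluate the extension condition $\cha A\mid d+2$ in each case, where $d+2$ equals $n$, $n+1$ or $n+2$. In case A, $d=n-2$ and the extra vertex is added iff $\cha A\mid n$, which holds, giving $\omega=(n-2)+1+1=n$. In case B, $d=n-1$ and $\cha K\nmid n+1$ blocks the extension, giving $\omega=n$. In case C, $d=n-1$ and $\cha K\mid n+1$ forces the extension, giving $\omega=n+1$. In cases D and E, $d=n$ and the extension is governed by divisibility of $n+2$, yielding $\omega=n+1$ and $\omega=n+2$ respectively. Matching these divisibilities against the characteristic conditions recorded in Table~\ref{tbl:cliquenumbers:char:odd} reproduces the stated clique numbers line by line.

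\textbf{Main obstacle.} Two points require care. First, the passage from a finite field to a general finite local ring: the embedding and classification statements invoked above are field statements, so each must be transported along the Witt-ring isomorphism $A\to K$ (as in the proof of Corollary~\ref{FiniteLocalRingQ_n-1AsNonRegularSubform}), after checking that the hypotheses ``$n$ is a unit'', ``$n+1$ is a unit'' are genuinely residue-field conditions. Second, and most delicate, is the singular codimension-one case $\cha K\mid n$ (case A): here $\gamma_{a,n-1}$ is degenerate, the regular subform criterion of Lemma~\ref{SubformWittIndex} does not apply, and the correct value $d=n-2$ rests on the determinant computation of Lemma~\ref{DetWhenNotRegularCodim1} underlying Corollary~\ref{FiniteLocalRingQ_n-1AsNonRegularSubform}. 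Throughout, one must keep the unit conditions (governed by $\cha K$) carefully separate from the extension condition $\cha A\mid d+2$ supplied by Lemma~\ref{LemUpperBoundClique}, and verify that the latter matches the tabulated characteristic condition for the rings under consideration.
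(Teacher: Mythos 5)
Your proposal follows essentially the same route as the paper: reduce via Proposition~\ref{CliqueNrWholeGraphReducedGraph}, determine the largest $m$ with $\gamma_{a,m}\subseteq q$ (regularity of the test form governed by $\cha K$, with Corollaries~\ref{FiniteFieldQ_n-1AsNonRegularSubform} and \ref{FiniteLocalRingQ_n-1AsNonRegularSubform} excluding the degenerate codimension-one embedding), and finish with Lemma~\ref{LemUpperBoundClique}. The only methodological difference is minor: where the paper embeds the regular $\gamma_{a,n-1}$ by comparing dimension and determinant, you invoke Lemma~\ref{SubformWittIndex} together with a Witt-index count over the residue field; both are valid and transfer to local rings along the Witt-ring isomorphism. (You also skip the separate treatment of $n=1,2$, but your framework absorbs those cases.)

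One step does not close, and you half-notice it yourself. The extension criterion you correctly extract from Lemma~\ref{LemUpperBoundClique} is $\cha A\mid d+2$, while the table's hypotheses are divisibilities by $\cha K$; for a finite local ring one only has $\cha K\mid\cha A$, so $\cha K\mid m$ does \emph{not} imply $\cha A\mid m$. In case A you nevertheless write that the extra vertex is added ``iff $\cha A\mid n$, which holds'' --- it need not. Concretely, take $A=\Z/9\Z$, $q=\qf{1,1,1}$, $a=1$, $n=3$: this is case A ($\cha K=3\mid 3$ and $q\not\cong\gamma_{1,3}$ since the determinants $8$ and $4$ differ modulo squares), yet the only candidate third vertex $-x_1$ fails because $q(2x_1)=4\neq 1$, and no clique with two independent vectors exists since $\gamma_{1,2}$ is not dominated by $q$ modulo $3$; hence $\omega=2$, not $n=3$. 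The same mismatch affects cases C and E, while B and D are safe because $\cha K\nmid m$ does force $\cha A\nmid m$. To be fair, the paper's own proof makes the identical silent identification, so your argument is exactly as complete as the published one: it is correct for finite fields and for local rings that are algebras over their residue field, but under the stated $\cha K$ conditions the general local-ring claim fails, so the gap reflects the theorem's formulation rather than a repairable omission in your proof.
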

\begin{proof}
    We use Proposition~\ref{CliqueNrWholeGraphReducedGraph} in all cases and will handle the cases $\dim(q)=1$ and $\dim(q)=2$ seperately. 
    Note that for $n=1$, only cases B, D, E and for $n=2$, only cases B, C, D can occur.
    If $\dim(q)=1$, the assertion follows from Lemma~\ref{LemUpperBoundClique}. 
    For the case $\dim(q)=2$, we first note that we can have $q\cong \gamma_{a, 2}$ only if $\cha K\neq 3$ since otherwise, $q$ would be not regular. 
    The case where we have this isometry is thus clear by Lemma~\ref{LemUpperBoundClique} as we do have $\cha K\nmid 4=\dim(q)+2$. 
    If $q\not\cong \gamma_{a, 2}$, $q$ still represents $a$ so that we have $\qf a=\gamma_{a, 1}\subseteq q$ and the claim now follows by the 1-dimensional case.\\
    We now consider the case $n\geq 3$. 
    If we have $q\cong \gamma_{a,n}$, the assertion follows as above from Lemma~\ref{LemUpperBoundClique}. 
    So let now $q\not\cong \gamma_{a,n}$. 
    If $n\in A^\ast$, i.e. $\cha K\nmid n$, the form $\gamma_{a,n-1}$ is regular. 
    It is further a subform of $q$ since $q$ and $\gamma_{a,n-1}\perp\qf{\det(\gamma_{a,n-1})\cdot \det (q)}$ are regular forms of the same dimension and with the same determinant up to multiplication by a square and therefore isometric. 
    This case is thus also clear by Lemma~\ref{LemUpperBoundClique}.
    In the remaining case, we know from Corollary~\ref{FiniteFieldQ_n-1AsNonRegularSubform} respectively \ref{FiniteLocalRingQ_n-1AsNonRegularSubform} that any form that has $\gamma_{a,n-1}$ as a subform is already isometric to $\gamma_{a,n}$ which we excluded for $q$. 
    Thus $q$ cannot have $\gamma_{a,n-1}$ as a subform, but similarly as before, we can find the regular form $\gamma_{a,n-2}$ as a subform of $q$. 
    This case now follows from the same arguments as before involving Lemma~\ref{LemUpperBoundClique}.
\end{proof}

The above proof gives information on the co-dimension the maximal test form in $q$.

\begin{corollary} \label{cor:co-dim}
    In the situation of the Theorem~\ref{CliqueNumbersTheorem}, $q$ contains a subform isometric to $\gamma_{a,k}$ with
    \[
        k = 
        \begin{cases}
            n-2 & \text{in case A}\\
            n-1 & \text{in cases B and C}\\
            n & \text{in cases D and E}.
        \end{cases}
    \]
\end{corollary}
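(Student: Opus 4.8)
The plan is to unwind the case analysis already carried out in the proof of Theorem~\ref{CliqueNumbersTheorem}, where the relevant embeddings of test forms were constructed, and to read off in each case the largest $k$ for which $\gamma_{a,k}\subseteq q$. Since $\dim q = n$, every candidate value satisfies $k\le n$, so I only need to pin down the top dimension in which a copy of the test form actually sits inside $q$, and to certify that no larger copy exists.

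First I would dispose of cases D and E, where $q\cong\gamma_{a,n}$ by hypothesis; here $q$ is itself a copy of $\gamma_{a,n}$, so $k=n$, and for dimension reasons no larger subform can occur. Next come cases B and C, characterised by $q\not\cong\gamma_{a,n}$ and $\cha K\nmid n$. As in the theorem's proof, $\gamma_{a,n-1}$ is then regular (its determinant is $n\cdot a^{n-1}$, a unit modulo $\mathfrak m$), and it embeds in $q$: the regular forms $\gamma_{a,n-1}\perp\qf{\det(\gamma_{a,n-1})\cdot\det(q)}$ and $q$ have equal dimension and equal determinant up to squares, hence are isometric over the finite local ring $A$ (via reduction to the residue field). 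This gives $k\ge n-1$. Equality is forced, since $k=n$ would mean $\gamma_{a,n}\subseteq q$ with $\dim q=n$, i.e. $q\cong\gamma_{a,n}$, contradicting the hypothesis.

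The crux is case A, where $q\not\cong\gamma_{a,n}$ but $\cha K\mid n$. Now $\gamma_{a,n-1}$ is no longer regular, and the real obstacle is to show it cannot embed in $q$ at all. This is precisely the content of Corollary~\ref{FiniteFieldQ_n-1AsNonRegularSubform} (resp.\ Corollary~\ref{FiniteLocalRingQ_n-1AsNonRegularSubform}): if $\gamma_{a,n-1}\subseteq q$, then the Gram matrix of $b_q$ admits the block form with an $a(I_{n-1}+J_{n-1})$ corner, and the determinant computation of Lemma~\ref{DetWhenNotRegularCodim1} then forces $q\cong\gamma_{a,n}$. Since we assumed $q\not\cong\gamma_{a,n}$, this excludes $\gamma_{a,n-1}\subseteq q$, and as $\gamma_{a,n}\subseteq q$ is impossible by dimension, we obtain $k\le n-2$.

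For the matching lower bound in case A I would observe that $\gamma_{a,n-2}$ is regular: from $\cha K\mid n$ one gets $\cha K\nmid n-1$, so its determinant $n-1$ is a unit modulo $\mathfrak m$. The same determinant-and-dimension embedding argument as in cases B and C then yields $\gamma_{a,n-2}\subseteq q$, whence $k=n-2$. The only genuinely nontrivial input is the invocation of the non-regular-subform corollaries to rule out $\gamma_{a,n-1}\subseteq q$; everything else is bookkeeping within the framework of Theorem~\ref{CliqueNumbersTheorem}.
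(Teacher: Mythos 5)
Your proposal is correct and follows essentially the same route as the paper: the corollary is just a read‑off from the proof of Theorem~\ref{CliqueNumbersTheorem}, where cases D and E are immediate from $q\cong\gamma_{a,n}$, cases B and C use regularity of $\gamma_{a,n-1}$ together with the dimension-and-determinant classification to embed it, and case A uses Corollaries~\ref{FiniteFieldQ_n-1AsNonRegularSubform} and \ref{FiniteLocalRingQ_n-1AsNonRegularSubform} to exclude $\gamma_{a,n-1}$ and falls back to the regular form $\gamma_{a,n-2}$. Your additional maximality checks are harmless extras beyond what the statement asserts.
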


As the reduction theory does not work when 2 is not invertible, we only solve the case of finite fields.

\begin{theorem}\label{CliqueNumbersTheoremChar2}
    Let $F$ be a finite field of characteristic 2 and $q$ be an $n$-dimensional quadratic form over $F$ and $a\in F^\ast$. 
    We then have the following values for $\omega(\Gcal_{q,a})$:
    \begin{align*}
        \begin{tabular}{c|c|c|c}
            Case & isometry condition & dimension condition &  $\omega(\Gcal_{q,a})$\\ \hline
            A & $q\not\cong \gamma_{a,n}$ & $n\equiv 2\mod 4$ & $n-1$ \\ \hline
            B & $q\not\cong \gamma_{a,n}$ & $n\equiv 0\mod 4$ & $n$ \\ \hline
            C & $q\cong \gamma_{a,n}$ & $n\equiv 0\mod 4$ & $n+1$ \\ \hline
            D & $q\cong \gamma_{a,n}$ & $n\equiv 2\mod 4$ & $n+2$
        \end{tabular}
    \end{align*}
\end{theorem}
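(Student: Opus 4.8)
The plan is to reduce everything to the single invariant $d := \max\{m \in \N_0 : \gamma_{a,m} \subseteq q\}$, the largest dimension of a (possibly singular) test-form subform of $q$, exactly as set up at the end of the previous subsection. Since $a \in F^\ast$, Proposition~\ref{CliqueNrWholeGraphReducedGraph} gives $\omega(\Gcal_{q,a}) = \omega(\red G) + 1$, so it suffices to compute $\omega(\red G)$. On the one hand, any clique $C$ in $\red G$ spans a subspace $W$ with $q|_W \cong \gamma_{a,\dim W}$ by Lemma~\ref{BilinearFormCliqueReducedGraph}, forcing $\dim W \le d$; on the other hand, the canonical basis of a copy of $\gamma_{a,m}$ inside $q$ is a clique in $\red G$. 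By Lemma~\ref{LemUpperBoundClique}, specialised to characteristic $2$ (where $2 \notin F^\ast$ forces $k = \tfrac{m+2}{2}$), a clique spanning an $m$-dimensional space can be enlarged by $-\sum_i x_i$ exactly when $m \equiv 2 \pmod 4$. Writing $g(m) = m+1$ for $m \equiv 2 \pmod 4$ and $g(m) = m$ otherwise, a one-line check shows $\max_{0 \le m \le d} g(m) = g(d)$, so $\omega(\red G) = g(d)$ and the whole theorem collapses to determining $d$.

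To pin down $d$ I would use that over a finite field of characteristic $2$ a non-degenerate form is even-dimensional and classified by dimension and Arf invariant. If $q \cong \gamma_{a,n}$ (cases C, D) then $d = n$ at once, giving $\omega = g(n)+1$, i.e.\ $n+1$ for $n \equiv 0 \pmod 4$ and $n+2$ for $n \equiv 2 \pmod 4$. If $q \not\cong \gamma_{a,n}$ then $d \le n-1$, and Lemma~\ref{lem:q_n-1AsSubformChar2} separates the two remaining cases. For $n \equiv 0 \pmod 4$ (case B) I would note that as $b$ runs through $F$ the binary form $[a,b]$, whose Arf invariant is $ab \bmod \wp(F)$ with $a \ne 0$, realises both isometry classes, so $q \cong \gamma_{a,n-2} \perp [a,b]$ for a suitable $b$; the converse part of Lemma~\ref{lem:q_n-1AsSubformChar2} then yields $\gamma_{a,n-1} \subseteq q$, whence $d = n-1$ and $\omega = n$. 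For $n \equiv 2 \pmod 4$ (case A) the forward part of Lemma~\ref{lem:q_n-1AsSubformChar2} shows $\gamma_{a,n-1} \not\subseteq q$, so $d \le n-2$; since $\gamma_{a,n-2}$ is regular (its dimension $n-2$ is even with odd determinant $(n-1)a^{n-2}$) and any regular form of codimension $2$ embeds in $q$ by choosing the complementary binary form to match the Arf invariant of $q$, we get $d = n-2$ and $\omega = n-1$.

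I expect the main obstacle to be the second step, namely turning the easy upper bounds $d \le n-1$ and $d \le n-2$ into the matching lower bounds $\gamma_{a,n-1} \subseteq q$ and $\gamma_{a,n-2} \subseteq q$. Both rest on the realisation of $q$ as $\gamma_{a,n-2} \perp [a,b]$ and on the completeness of (dimension, Arf invariant) as an isometry invariant over finite fields of characteristic $2$; this is precisely where Lemmas~\ref{lem:ArfQ_an} and~\ref{lem:q_n-1AsSubformChar2} carry the weight. A minor subtlety is that $\gamma_{a,m}$ is singular for odd $m$, so the reduction must allow degenerate subspaces $W$; this is harmless because $g$ attains its maximum over $\{0,\dots,d\}$ at $m=d$, so the singular intermediate dimensions never govern the final clique count.
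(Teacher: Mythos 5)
Your proposal is correct and follows essentially the same route as the paper's proof: reduce to the largest $m$ with $\gamma_{a,m}\subseteq q$, convert that to a clique number via Lemma~\ref{LemUpperBoundClique} (where in characteristic $2$ the extra vector $-\sum_i x_i$ exists precisely when $m\equiv 2\pmod 4$), embed $\gamma_{a,n-2}$ using the dimension/Arf-invariant classification, and settle the $\gamma_{a,n-1}$ question with Lemma~\ref{lem:q_n-1AsSubformChar2}. You merely make explicit some steps the paper compresses, in particular the realisation $q\cong\gamma_{a,n-2}\perp[a,b]$ needed to invoke the converse direction of Lemma~\ref{lem:q_n-1AsSubformChar2} in case B, and the observation that the maximum of the resulting clique sizes over $0\le m\le d$ is attained at $m=d$.
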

\begin{proof}
    Similar to the case of characteristic $\neq2$, it is easy to see that any $n$-dimensional form over a finite field contains $\gamma_{a,n-2}$ as a subform.
    By Lemma~\ref{lem:q_n-1AsSubformChar2}, we further know that when $n\equiv 0 \mod 4$,  $\gamma_{a,n-1}$ is dominated by $q$.
    The claim thus follows using \ref{LemUpperBoundClique}.
\end{proof}

\begin{remark}\label{rem:TwoOrbits}
    In case B in Theorem~\ref{CliqueNumbersTheoremChar2}, we have the unique situation that the dimension of the subspace spanned by a maximum clique is not uniquely determined. The dimension can be either $n-1$ or $n-2$. Therefore we have maximum cliques in $\red V$ of two different types.
    
    The cliques consisting of $n-1$ linearly independent vectors come from domination of the form $\gamma_{a,n-1}$, and cliques consisting of $n-1$ linearly dependent vectors come from the subform $\gamma_{a,n-2}$ (see Lemma~\ref{LemUpperBoundClique}).
\end{remark}

\begin{corollary} \label{cor:co-dimChar2}
    In the situation of the Theorem~\ref{CliqueNumbersTheoremChar2}, $q$ contains a subform isometric to $\gamma_{a,k}$ with
    \[
        k = 
        \begin{cases}
            n-2 & \text{in case A}\\
            n-1 & \text{in case B}\\
            n & \text{in cases C and D}.
        \end{cases}
    \]
\end{corollary}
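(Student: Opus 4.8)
The plan is to extract the maximal test-subform dimension directly from the proof of Theorem~\ref{CliqueNumbersTheoremChar2}, in complete analogy with how Corollary~\ref{cor:co-dim} is obtained from Theorem~\ref{CliqueNumbersTheorem}. The conceptual bridge is the correspondence recorded just before Lemma~\ref{SubformWittIndex}: if $C$ is a clique in $\Gcalred_{q,a}$ and $W=\spn(C)$ with $m=\dim(W)$, then by Lemma~\ref{BilinearFormCliqueReducedGraph} we have $(W,q_{\mid W})\cong\gamma_{a,m}$, so $q$ contains $\gamma_{a,m}$; conversely the standard basis of any embedded copy of $\gamma_{a,m}$ is a clique spanning an $m$-dimensional space. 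Hence the number $k$ in the statement, namely the largest dimension for which $\gamma_{a,k}$ is a (possibly degenerate) subform of $q$, equals the maximal dimension of the span of a clique, and it suffices to pin this down in each case.

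First I would record the lower bounds. The proof of Theorem~\ref{CliqueNumbersTheoremChar2} observes that every $n$-dimensional form over a finite field contains $\gamma_{a,n-2}$, giving $k\geq n-2$ in all four cases. When $n\equiv0\bmod 4$, Lemma~\ref{lem:q_n-1AsSubformChar2} shows that $\gamma_{a,n-1}$ is dominated by $q$, so $k\geq n-1$ in cases B and C; and whenever $q\cong\gamma_{a,n}$ (cases C and D) the form $q$ itself witnesses $\gamma_{a,n}\subseteq q$, so $k=n$ there.

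The matching upper bounds come from reading Lemma~\ref{lem:q_n-1AsSubformChar2} as an obstruction. In case A we have $q\not\cong\gamma_{a,n}$ and $n\equiv2\bmod 4$; since that lemma forces $n\equiv0\bmod 4$ whenever $\gamma_{a,n-1}$ is dominated by an $n$-dimensional $q\not\cong\gamma_{a,n}$, here $\gamma_{a,n-1}\not\subseteq q$, while $\gamma_{a,n}\subseteq q$ is impossible for dimension reasons as it would force $q\cong\gamma_{a,n}$; hence $k=n-2$. In case B the same dimension argument excludes $\gamma_{a,n}\subseteq q$, so the lower bound $k\geq n-1$ is sharp, giving $k=n-1$. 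In cases C and D, $q\cong\gamma_{a,n}$ already gives $k=n$.

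The only delicate point, and the step I expect to carry the content, is that in characteristic $2$ with $n$ even the form $\gamma_{a,n-1}$ is singular---its Gram determinant $n\cdot a^{n-1}$ vanishes---so the relation $\gamma_{a,n-1}\subseteq q$ cannot mean an orthogonal splitting and must be read as domination (a nonsingular completion). Lemma~\ref{lem:q_n-1AsSubformChar2} is exactly the tool that decides when this happens, and it is the sole nontrivial input; the remainder is the dimension bookkeeping already underlying the clique-number computation, so no work beyond invoking the two cited results is required.
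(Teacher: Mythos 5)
Your proposal is correct and takes essentially the same route as the paper: the corollary is simply read off from the proof of Theorem~\ref{CliqueNumbersTheoremChar2}, with the universal subform $\gamma_{a,n-2}\subseteq q$ giving the baseline, Lemma~\ref{lem:q_n-1AsSubformChar2} supplying both the domination of $\gamma_{a,n-1}$ when $n\equiv 0\bmod 4$ and the obstruction when $n\equiv 2\bmod 4$, and the dimension count ruling out $\gamma_{a,n}\subseteq q$ unless $q\cong\gamma_{a,n}$. Your remark that $\gamma_{a,n-1}$ is singular in characteristic $2$ and must be interpreted via domination is exactly the reading the paper intends.
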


\begin{remark}
    Lemma~\ref{BilinearFormCliqueReducedGraph} shows that a clique in $V(\red G)$ is an $(a, a^2)$-equiangular system, see \cite[Definition 4.1]{MR4339582}. 
    Over perfect fields of characteristic $2$ such as finite fields, these notations coincide since every element has exactly one square root.
    In general, equiangular systems may be bigger \cite[Theorem 4.2]{MR4339582} and thus, associated graphs may have a higher clique number than the ones we just calculated in our graphs, see \cite[Lemma 6.2]{MR4364998}.
\end{remark}

\subsection{Number of maximum cliques}

We now turn our investigation to the calculation of the number of maximal cliques for the case of a finite field. 
We will denote this number by $\omega_\text{max}(\Gcal_{q,a})$. 

\begin{proposition}\label{AllMaxCliquesInOneOrbit}
    Let $C$ be a clique of maximal cardinality $k$ in $\Gcal_{q,a}$ for some quadratic form over a field and consider the set $\binom Vk$ of all subsets of $k$ elements of $V$. We let $\iso(q)$ act on $\binom Vk$ by 
    $$\sigma(\{x_1,\ldots,x_k\}):=\{\sigma(x_1),\ldots, \sigma(x_k)\}.$$
    Then the orbit of $C$ consists precisely of all maximum cliques except for case B in Theorem~\ref{CliqueNumbersTheoremChar2}, where the set of maximum cliques is a union of two orbits.
\end{proposition}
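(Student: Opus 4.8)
The plan is to prove that the affine isometry group $\iso(q)$ acts transitively on the set of maximum cliques, via a two-step reduction: first normalize any maximum clique to contain $0$ using a translation, then use the orthogonal group to map one such clique to another. The key structural fact, established in the preceding sections, is that a maximum clique spanning a subspace $W$ makes $(W, q|_W)$ isometric to the test form $\gamma_{a,m}$ for the appropriate $m$ (from Corollaries~\ref{cor:co-dim} and \ref{cor:co-dimChar2}, together with the observation following Lemma~\ref{DetWhenNotRegularCodim1} that $q|_W \cong \gamma_{a,m}$). So transitivity of the action essentially reduces to the statement that any two embeddings of $\gamma_{a,m}$ into $q$ differ by an element of $\og(q)$.

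First I would reduce to cliques containing the origin. Given two maximum cliques $C, C'$, pick $v \in C$, $v' \in C'$ and apply translations $\tau_{-v}, \tau_{-v'}$ (automorphisms of the graph by Section~\ref{Sec:IntroToInvariants}) to obtain cliques $C_0, C_0'$ through $0$, with $C_0 \setminus\{0\}, C_0'\setminus\{0\} \subseteq \red V$ as in Proposition~\ref{CliqueNrWholeGraphReducedGraph}. It now suffices to find $\varphi \in \og(q)$ with $\varphi(C_0) = C_0'$, since composing with translations stays inside $\iso(q)$. Next I would invoke Witt's extension theorem: the spans $W = \spn(C_0)$ and $W' = \spn(C_0')$ both carry restricted forms isometric to the same $\gamma_{a,m}$, and any isometry $W \to W'$ matching the distinguished generating configurations extends to an isometry of all of $(V,q)$ precisely because $q$ is non-degenerate and Witt cancellation holds over finite fields. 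The remaining point is that the isometry $W \to W'$ can be chosen to send $C_0$ to $C_0'$ as sets, not merely to be an abstract isometry of forms; this follows from the rigidity of the configuration $\gamma_{a,m}$ — the generators $x_1,\dots,x_m$ and the extra vector $-\sum x_i$ (when present, per Lemma~\ref{LemUpperBoundClique}) are pinned down by the mutual relations $q(x_i) = a$, $b(x_i,x_j) = a$ of Lemma~\ref{BilinearFormCliqueReducedGraph}.

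\textbf{The main obstacle} is case B of Theorem~\ref{CliqueNumbersTheoremChar2}, which I expect to require separate treatment and which accounts for the ``union of two orbits'' clause. As flagged in Remark~\ref{rem:TwoOrbits}, here maximum cliques of size $n-1$ come in two geometrically distinct flavors: those spanning an $(n-1)$-dimensional space (arising from domination of $\gamma_{a,n-1}$ via Lemma~\ref{lem:q_n-1AsSubformChar2}) and those spanning only an $(n-2)$-dimensional space (arising from $\gamma_{a,n-2}$ plus the extra vector of Lemma~\ref{LemUpperBoundClique}). Since isometries preserve the dimension of the span, no element of $\iso(q)$ can carry a clique of the first type to one of the second, so these necessarily lie in different orbits — giving at least two orbits. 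I would then argue that within each dimension class the configuration is again rigid up to $\og(q)$ by the Witt-extension argument above, so there are exactly two orbits. For all other cases the span dimension of a maximum clique is uniquely determined by Corollaries~\ref{cor:co-dim} and \ref{cor:co-dimChar2}, so the single-orbit conclusion follows uniformly.

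One subtlety I would be careful about is whether Witt's extension theorem applies cleanly when the spanned subspace $W$ is \emph{degenerate} (which happens exactly when $m = n$ and $q \cong \gamma_{a,n}$ is the test form itself, or in the non-regular embeddings). In the degenerate situations one works instead with the radical-respecting version of Witt extension, or observes directly that an isometry matching the explicit Gram matrices $a(I_m + J_m)$ on the two configurations automatically respects radicals and thus extends. Since over a finite field the isometry type of $(V,q)$ is rigidly controlled (by dimension and determinant in odd characteristic, by dimension and Arf invariant in characteristic~$2$), this extension step goes through in every case, completing the proof that all maximum cliques form a single $\iso(q)$-orbit outside of case~B and a union of exactly two orbits in case~B.
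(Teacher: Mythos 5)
Your proposal is correct and follows essentially the same route as the paper: translate both maximum cliques to contain $0$, observe via Lemma~\ref{LemUpperBoundClique} and Remark~\ref{rem:clique:independent} that the nonzero clique elements are linearly independent up to possibly one vector of the form $-\sum x_i$, send generators to generators (an isometry by the Gram relations of Lemma~\ref{BilinearFormCliqueReducedGraph}), extend by Witt's extension theorem, and separate case~B of Theorem~\ref{CliqueNumbersTheoremChar2} by the span dimension as in Remark~\ref{rem:TwoOrbits}. Your added caution about degenerate spans is reasonable but is already covered by the version of Witt extension the paper cites, which applies to isometries between arbitrary (not necessarily nondegenerate) subspaces of a nondegenerate space.
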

\begin{proof}
    It is clear that the orbit of $C$ only contains cliques of maximal cardinality.
    Let now $C_1, C_2$ be two cliques of maximal cardinality. 
    After translation, we may assume $0\in C_1, C_2$. 
    For $i\in\{1,2\}$, let $C_i=\{0, x^{(i)}_1,\ldots, x^{(i)}_{k-1}\}$.
    By Lemma~\ref{LemUpperBoundClique} and Remark~\ref{rem:clique:independent} either $x^{(i)}_1,\ldots, x^{(i)}_{k-1}$ are linearly independent or, after renumbering, we have $x^{(i)}_{k-1}=-\sum\limits_{j=1}^{k-2}x^{(i)}_j$ and $x^{(i)}_1,\ldots, x^{(i)}_{k-2}$ are linearly independent.
    
    We first show that if $\dim(\spn(C_1)) = \dim(\spn(C_2))$ they lie in the same orbit. In all cases except from case $B$ of Theorem~\ref{CliqueNumbersTheoremChar2} we will then obtain a unique orbit and by Remark~\ref{rem:TwoOrbits} we will obtain two distinct orbits in the exceptional case.
    
    If $\dim(\spn(C_1)) = k-1 = \dim(\spn(C_2))$, let $\sigma\in S_{k-1}$ be any permutation of $\{1,\ldots, k-1\}$.
    There is a unique linear map $\spn(C_1)\to \spn(C_2)$ with $x^{(1)}_{j}\mapsto x^{(2)}_{\sigma(j)}$.
    This is clearly a linear isometry that can be extended to an isometry of $q$ by Witt's extension theorem \cite[Theorem 8.3]{MR2427530}.\\
    In the remaining case we use the same strategy as above.
    Here we can define $\sigma$ by choosing elements $j_1,\ldots, j_{k-2}$ with $\{j_1,\ldots, j_{k-2}\}\in\binom{\{1,\ldots, k-1\}}{k-2}$, map $x^{(1)}_\ell\mapsto x^{(2)}_{j_{\ell}}$ and extend this to an isometry.
\end{proof}

The above argument in particular shows the following:

\begin{corollary}\label{LinearIsometriesBetweenCliquesInRedV}
     Let $C_1, C_2\subseteq \red V$ be cliques of size $k$ with $0\in C_1, C_2$ and $\dim(\spn(C_1))=\dim(\spn(C_2))$. 
     Then there are $(k-1)!$  linear isometries $$(\spn(C_1), q)\to(\spn(C_2), q)$$ that map $C_1$ to $C_2$.
\end{corollary}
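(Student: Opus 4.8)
The plan is to extract and sharpen the counting already contained in the proof of Proposition~\ref{AllMaxCliquesInOneOrbit}. Write $C_i = \{0, x^{(i)}_1, \ldots, x^{(i)}_{k-1}\}$ for $i \in \{1,2\}$. By Lemma~\ref{LemUpperBoundClique} together with Remark~\ref{rem:clique:independent}, the nonzero vectors of each $C_i$ either are linearly independent, so that $\dim(\spn(C_i)) = k-1$, or, after renumbering, satisfy $x^{(i)}_{k-1} = -\sum_{j=1}^{k-2} x^{(i)}_j$ with $x^{(i)}_1, \ldots, x^{(i)}_{k-2}$ linearly independent, so that $\dim(\spn(C_i)) = k-2$. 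Since $\dim(\spn(C_1)) = \dim(\spn(C_2))$ by hypothesis, both cliques fall into the same one of these two cases.

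First I would record the algebraic data shared by the two cliques. By Lemma~\ref{BilinearFormCliqueReducedGraph}, for each $i$ the nonzero vectors satisfy $q(x^{(i)}_j) = a$ for all $j$ and $b(x^{(i)}_j, x^{(i)}_\ell) = a$ for all $j \neq \ell$. In particular these values agree for $i = 1$ and $i = 2$, so any linear map sending $x^{(1)}_j$ to $x^{(2)}_{\sigma(j)}$ preserves $q$ and $b$ on the spanning family, hence on all of $\spn(C_1)$. Moreover the space of linear relations among $x^{(i)}_1, \ldots, x^{(i)}_{k-1}$ is the same for $i = 1, 2$: it is $\{0\}$ in the independent case, and the line spanned by the all-ones vector $\mathbf 1$ in the dependent case, since there the unique relation is $\sum_{j=1}^{k-1} x^{(i)}_j = 0$.

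The central step is to set up a bijection between $S_{k-1}$ and the set of linear isometries $(\spn(C_1), q) \to (\spn(C_2), q)$ carrying $C_1$ to $C_2$. Given $\sigma \in S_{k-1}$, the assignment $x^{(1)}_j \mapsto x^{(2)}_{\sigma(j)}$ extends to a well-defined linear map $\phi_\sigma$ precisely because it carries the relation space of $(x^{(1)}_j)_j$ into that of $(x^{(2)}_j)_j$: in the dependent case $\sigma$ fixes the line $\spn(\mathbf 1)$, so $\sum_j x^{(1)}_j = 0$ is sent to $\sum_j x^{(2)}_{\sigma(j)} = \sum_\ell x^{(2)}_\ell = 0$, which indeed holds. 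Since $\phi_\sigma$ sends the spanning family of $\spn(C_1)$ onto that of $\spn(C_2)$ and matches the Gram data recorded above, it is a linear isometry, and it maps $C_1$ onto $C_2$ while fixing $0$. Conversely, any linear isometry $\spn(C_1) \to \spn(C_2)$ taking $C_1$ to $C_2$ fixes $0$ and hence permutes the nonzero vectors, producing a unique $\sigma \in S_{k-1}$; as these vectors span, the isometry is determined by $\sigma$ and therefore equals $\phi_\sigma$. Distinct permutations visibly yield distinct maps, so $\sigma \mapsto \phi_\sigma$ is a bijection and the count is $|S_{k-1}| = (k-1)!$.

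The one genuinely delicate point is the well-definedness of $\phi_\sigma$ in the linearly dependent case: here the $k-1$ vectors are not a basis, so I must check that an arbitrary permutation of all of them respects the single relation. This is exactly where the symmetry of the test form $\gamma_{a,k-2}$ enters, as the relation is the fully symmetric vector $\mathbf 1$, which every permutation preserves. A small companion observation --- that any $k-2$ of the $k-1$ nonzero vectors are linearly independent, so that $\phi_\sigma$ sends a basis to a basis and is in particular invertible --- follows from the relation space being the single full-support line $\spn(\mathbf 1)$.
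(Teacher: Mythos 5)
Your proof is correct and follows essentially the same route as the paper, which obtains this corollary directly from the counting inside the proof of Proposition~\ref{AllMaxCliquesInOneOrbit}: permutations of the $k-1$ nonzero clique vectors correspond bijectively to linear isometries of the spans, because the Gram data $q(x^{(i)}_j)=a$ and $b(x^{(i)}_j,x^{(i)}_\ell)=a$ is identical for both cliques. Your explicit well-definedness check for arbitrary $\sigma\in S_{k-1}$ in the degenerate case (the unique relation being the symmetric vector $\mathbf 1$) is a nice supplement; the paper handles that case equivalently by parameterizing the maps through ordered $(k-2)$-subsets of the $k-1$ indices, which yields the same count $(k-1)!$.
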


\begin{proposition}\label{SizeStabilizator}
    Let $C\subseteq \red V$ be a maximum clique of size $k$ with $0\in C$ and let $W=\spn(C)$. 
    For the stabilizer $\iso(q|_W)_C$, we have 
    $$|\iso(q|_W)_C|=k!.$$
\end{proposition}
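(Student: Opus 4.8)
The plan is to exploit the semidirect-product structure $\iso(q|_W) = W \rtimes \og(q|_W)$ to write each affine isometry of $(W, q|_W)$ uniquely as $\phi = \tau_w \circ \sigma$, where $\sigma \in \og(q|_W)$ is linear and $\tau_w$ denotes translation by $w \in W$, so that $\phi(v) = \sigma(v) + w$. I would then count the admissible translation parts $w$ and, for each of them, the admissible linear parts $\sigma$.

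For the translation part: if $\phi$ stabilizes $C$, then since $0 \in C$ we have $w = \phi(0) \in \phi(C) = C$, so $w$ must be one of the $k$ elements of $C$. Conversely, for a fixed $w \in C$ the stabilizing condition $\phi(C) = C$ is equivalent to $\sigma(C) = C - w$. By Lemma~\ref{TranslationOfCliquesInVred}, because $w \in C$, the set $C - w$ is again a clique in $\red V \cup \{0\}$ of size $k$ containing $0$.

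For the linear part: I would first argue that $\spn(C - w) = W$. Since $0 \in C$, the linear span of $C$ coincides with its affine span, which is $W$; affine spans are preserved under translation, and $0 \in C - w$, so $\spn(C - w)$ equals the affine span of $C - w$, namely $W$ as well. Thus $\dim \spn(C) = \dim \spn(C - w)$, and Corollary~\ref{LinearIsometriesBetweenCliquesInRedV} yields exactly $(k-1)!$ linear isometries $\sigma \colon (W, q|_W) \to (W, q|_W)$ with $\sigma(C) = C - w$; these are precisely the linear parts that complete $\phi$ into a stabilizing affine isometry with translation part $w$.

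Finally I would assemble the count: distinct pairs $(w, \sigma)$ yield distinct maps (evaluating at $0$ recovers $w$, after which $\sigma$ is determined), so $|\iso(q|_W)_C| = k \cdot (k-1)! = k!$. I expect the main obstacle to be the span-equality step, namely ensuring that translating $C$ by one of its own elements does not decrease the dimension of the span, so that the equal-dimension hypothesis of Corollary~\ref{LinearIsometriesBetweenCliquesInRedV} genuinely holds. The affine-span argument above settles this uniformly; alternatively one can verify it by hand in the two cases $\dim W = k-1$ and $\dim W = k-2$ distinguished in Remark~\ref{rem:clique:independent}.
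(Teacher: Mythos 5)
Your proof is correct and follows essentially the same route as the paper's: decompose each stabilizing affine isometry as $\tau_w\circ\sigma$, use Lemma~\ref{TranslationOfCliquesInVred} to see that the translation part $w$ must range over the $k$ elements of $C$, and apply Corollary~\ref{LinearIsometriesBetweenCliquesInRedV} to count $(k-1)!$ admissible linear parts for each such $w$. Your explicit check that $\spn(C-w)=\spn(C)$, which is needed for the equal-dimension hypothesis of that corollary, is a detail the paper's proof leaves implicit.
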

\begin{proof}
    Let $\varphi\in \iso(q)_C$. 
    There are unique $v\in V, \sigma\in O(q)$ with $\varphi=\tau_v\circ\sigma$. 
    Let $C=\{0, x_1\ldots, x_{k-1}\}$. We then have
    \begin{align*}
        \{\sigma(0), \sigma(x_1),\ldots, \sigma({x_{k-1}})\}=\{-v, x_1-v,\ldots, x_{k-1}-v\}.
    \end{align*}
    By Lemma~\ref{TranslationOfCliquesInVred}, this equality is possible if and only if $v\in C$, i.e. we have $k$ possible choices for $v$. Combining with Corollary~\ref{LinearIsometriesBetweenCliquesInRedV}, we obtain the desired result. 
\end{proof}

As a last auxiliary result, we need the size of the isometry groups $\iso(q)$.

\begin{theorem}[\cite{MR1859189}, Theorem 9.11 and Theorem 14.48]\label{SizeOfIso(q)}
    Let $q$ be a quadratic form of dimension $n$ over a finite field with $f$ elements. We have 
    \begin{align*}
        |O(q)|=\begin{cases}
            2\cdot f^{\frac{n(n-1)}{2}}\cdot\prod\limits_{2i<n}(1-f^{-2i}),&\text{ if }n\text{ is odd},\\
            2\cdot f^{\frac{n(n-1)}{2}}\cdot(1-f^{-\frac{n}{2}})\cdot\prod\limits_{2i<n}(1-f^{-2i}),&\text{ if }q\text{ is hyperbolic},\\
            2\cdot f^{\frac{n(n-1)}{2}}\cdot(1+f^{-\frac{n}{2}})\cdot\prod\limits_{2i<n}(1-f^{-2i}),&\text{ if }n\text{ is even, }q\text{ not hyperbolic}
        \end{cases}
    \end{align*}
    In particular, we have 
    \begin{align*}
        |O(q)|=\begin{cases}
            2,&\text{ if }\dim(q)=1\\
            2f-2,&\text{ if }q\cong\mathbb H\\
            2f+2,&\text{ if }\dim(q)=2, q\not\cong\mathbb H\\
        \end{cases}
    \end{align*}
\end{theorem}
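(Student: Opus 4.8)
The statement is the classical order formula for finite orthogonal groups, and the cited reference supplies a complete proof; I indicate the route I would take. The plan is to establish all three displayed formulas simultaneously by induction on $n$, using an orbit--stabilizer argument powered by Witt's extension theorem \cite[Theorem 8.3]{MR2427530}. Fix a non-degenerate quadratic space $(V,q)$ of dimension $n$ and a scalar $c\in F^\ast$ represented by $q$, and set $S_c=\{v\in V\mid q(v)=c\}$. Witt's theorem guarantees that $\og(q)$ acts transitively on $S_c$. For $c\neq 0$ (hence $q(v)\neq 0$) in odd characteristic one has $V=\qf c\perp v^\perp$, and an isometry fixing $v$ preserves $v^\perp$ and is determined by its restriction there, so the stabilizer of $v$ is exactly $\og(q|_{v^\perp})$. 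Orbit--stabilizer therefore yields the recursion
\[
    |\og(q)| = |S_c|\cdot |\og(q|_{v^\perp})|,
\]
and since the discriminant is multiplicative across the orthogonal splitting, the isometry type of the $(n-1)$-dimensional form $q|_{v^\perp}$ is pinned down by the type of $q$ together with the square class of $c$.

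The technical heart is the evaluation of $|S_c|$, which I would carry out by diagonalizing $q$ and computing the additive character sum $\sum_{t\in F}\sum_{v}\psi\bigl(t(q(v)-c)\bigr)$ via classical quadratic Gauss sums. For $f$ odd this gives $|S_c|=f^{n-1}+\eta\bigl((-1)^{m}c\det q\bigr)f^{m}$ when $n=2m+1$ is odd, and $|S_c|=f^{n-1}-\epsilon f^{m-1}$ when $n=2m$ is even, where $\eta$ is the quadratic character and $\epsilon=\eta\bigl((-1)^m\det q\bigr)=\pm1$ separates the hyperbolic ($+$) from the non-hyperbolic ($-$) type. Feeding these counts into the recursion closes the induction: passing from an even form of type $\epsilon$ to its odd hyperplane $v^\perp$ multiplies $|\og|$ by $f^{m-1}(f^m-\epsilon)$, while passing from an odd form to an even $v^\perp$ of type $\epsilon'$ multiplies by $f^{m}(f^{m}+\epsilon')$; in the latter step the paired factors combine as $(f^m+\epsilon')(f^m-\epsilon')=f^{2m}-1$, which is precisely the new term needed to extend $\prod_i(f^{2i}-1)$. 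One checks moreover that the two admissible choices of $\epsilon'$ give the same product, confirming the formula is well-defined, and a direct comparison against the claimed expressions verifies both recursion steps.

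The induction is anchored at the low-dimensional forms, and these base cases are exactly the ``in particular'' list: a direct computation gives $\og(\qf c)=\{\pm1\}$ of order $2$, the isometries of the hyperbolic plane form the order-$2(f-1)$ group $F^\ast\rtimes\langle\text{swap}\rangle$, and those of the anisotropic binary form give order $2(f+1)$. For characteristic $2$ the bilinear form $b_q$ is alternating, so non-degenerate forms exist only in even dimension and a single non-isotropic line can no longer be split off; there I would instead peel off non-degenerate planes (via transitivity on hyperbolic pairs, whose number is again classical), using the Arf invariant $\Delta$ in place of the discriminant to track the type, and starting the induction from the two binary base cases above. The main obstacle in this program is entirely the Gauss-sum evaluation of $|S_c|$ together with the sign bookkeeping that carries $\epsilon$ correctly through the alternation of dimension parity; once those are settled the remainder of the induction is mechanical.
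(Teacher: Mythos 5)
The paper offers no proof of this theorem: it is imported verbatim from the cited reference (Grove, Theorems 9.11 and 14.48), whose argument is exactly the classical one you sketch — Witt extension gives transitivity of $\og(q)$ on the sphere $S_c$, orbit–stabilizer reduces to $\og(q|_{v^\perp})$, and the point counts $|S_c|$ (via Gauss sums) drive the induction, with the binary and unary forms as base cases. Your sketch is correct, including the type bookkeeping $\epsilon,\epsilon'$ and the char-$2$ caveat, so there is nothing to reconcile with the paper.
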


Since we clearly have $|\iso(q)|=f^n\cdot |O(q)|$, we obtain:

\begin{corollary}\label{cor:SizeIso}
    Let $q$ be a quadratic form of dimension $n$ over a finite field with $f$ elements.
    We have the following values for $|\iso(q)|$:
    \begin{itemize}
        \item 
    If $q$ is hyperbolic, we have 
    \begin{align*}
        |\iso(q)|=2\cdot f^{\frac{n(n+1)}{2}}\cdot(1-f^{-\frac{n}{2}})\cdot\prod\limits_{2i<n}(1-f^{-2i})
    \end{align*}
    \item If $\dim(q)$ is even, but $q$ is not hyperbolic, we have 
    \begin{align*}
        |\iso(q)|=2\cdot f^{\frac{n(n+1)}{2}}\cdot(1+f^{-\frac{n}{2}})\cdot\prod\limits_{2i<n}(1-f^{-2i})
    \end{align*}
    \item If $\dim(q)$ is odd, we have 
    \begin{align*}
        |\iso(q)| = 2\cdot f^{\frac{n(n+1)}{2}}\cdot\prod\limits_{2i<n}(1-f^{-2i})
    \end{align*}
    \end{itemize}
\end{corollary}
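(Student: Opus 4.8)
The plan is to exploit the semidirect product structure recorded in \eqref{eq:affine:isometries}, namely $\iso(q) = V \rtimes \og(q)$, together with the order of the orthogonal group supplied by Theorem~\ref{SizeOfIso(q)}. The essential observation, already flagged in the sentence preceding the statement, is that the cardinality of a semidirect product is the product of the cardinalities of its two factors, so that $|\iso(q)| = |V| \cdot |\og(q)|$. Since $V$ is an $n$-dimensional vector space over the field with $f$ elements, its group of translations has order $|V| = f^n$, and hence $|\iso(q)| = f^n \cdot |\og(q)|$.

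With this reduction in hand, I would simply substitute the three expressions for $|\og(q)|$ from Theorem~\ref{SizeOfIso(q)}, splitting into the same three cases: $q$ hyperbolic, $\dim(q)$ even but $q$ not hyperbolic, and $\dim(q)$ odd. In each case the only manipulation required is to absorb the factor $f^n$ into the power of $f$ already present, using the exponent identity
\[
    n + \frac{n(n-1)}{2} = \frac{2n + n(n-1)}{2} = \frac{n(n+1)}{2}.
\]
The parity-dependent factors $(1 \mp f^{-n/2})$ and the product $\prod_{2i<n}(1-f^{-2i})$ are unaffected by this multiplication, so they carry over unchanged, yielding the three displayed formulas.

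I do not anticipate any genuine obstacle here: the result is a direct corollary and the argument is a one-line structural remark followed by routine exponent bookkeeping. The only points that warrant a moment's care are first, confirming that the three cases of Theorem~\ref{SizeOfIso(q)} correspond exactly to the three cases of the present statement, and second, verifying the exponent simplification so that $f^n \cdot f^{n(n-1)/2}$ collapses correctly to $f^{n(n+1)/2}$. Both are immediate, so the proof can be stated in a few lines.
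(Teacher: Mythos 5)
Your proposal is correct and is exactly the paper's argument: the corollary is deduced from Theorem~\ref{SizeOfIso(q)} via the one-line observation $|\iso(q)| = f^n\cdot|\og(q)|$ coming from the semidirect product structure, followed by the exponent simplification $n + \tfrac{n(n-1)}{2} = \tfrac{n(n+1)}{2}$.
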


{
\renewcommand{\arraystretch}{1.5}
\begin{theorem}\label{NumberOfCliques}
    Let $F$ be a finite field with $f$ elements, $q$ be an $n$-dimensional quadratic form over $F$ and $a\in F^*$. 
    We use the notation of Corollary~\ref{cor:SizeIso}.
    Let $\alpha$ be defined by
    \begin{align*}
        \alpha=\begin{cases}
            2f-2,&\text{ if }q\perp \gamma_{a,n-2}\text{ is hyperbolic},\\
            2f+2,&\text{ otherwise.}
        \end{cases}
    \end{align*}
    For the number of maximum cliques $\omega_\text{max}(\Gcal_{q,a})$, we have the following values:
    \begin{enumerate}[label=(\alph*)]
        \item if $\cha F\neq 2$ (cases as in Theorem~\ref{CliqueNumbersTheorem}):
    \begin{align*}
        \begin{tabular}{c||c|c|c}
            case / isometry & $q$ hyperbolic & $dim(q)$ even, $q$ not hyperbolic & $\dim(q)$ odd\\ \hline\hline 
            A & 
            $\frac{|\iso(q)|}{\alpha\cdot n!}$ & 
            $\frac{|\iso(q)|}{\alpha\cdot n!}$ & 
            $\frac{|\iso(q)|}{\alpha \cdot n!}$\\ \hline
            B & 
            $\frac{|\iso(q)|}{2\cdot n!}$ & 
            $\frac{|\iso(q)|}{2 \cdot n!}$ & 
            $\frac{|\iso(q)|}{2 \cdot n!}$\\ \hline
            C & 
            $\frac{|\iso(q)|}{2(n+1)!}$ & 
            $\frac{|\iso(q)|}{2(n+1)!}$ & 
            $\frac{|\iso(q)|}{2(n+1)!}$\\ \hline
            D & 
            $\frac{|\iso(q)|}{(n+1)!}$ & 
            $\frac{|\iso(q)|}{(n+1)!}$ & 
            $\frac{|\iso(q)|}{(n+1)!}$\\\hline 
            E & 
            $\frac{|\iso(q)|}{(n+2)!}$ & 
            $\frac{|\iso(q)|}{(n+2)!}$ & 
            $\frac{|\iso(q)|}{(n+2)!}$
        \end{tabular}
    \end{align*}
    \item if $\cha F=2$ (cases as in Theorem~\ref{CliqueNumbersTheoremChar2}):
        \begin{align*}
            \begin{tabular}{c||c|c}
                case / isometry & $q$ hyperbolic & $q$ not hyperbolic\\\hline\hline
                A & $\frac{|\iso(q)|}{\alpha(n-1)!}$ & $\frac{|\iso(q)|}{\alpha(n-1)!}$ \\\hline
                B & $\frac{|\iso(q)|}{\alpha\cdot n!} + \frac{|\iso(q)|}{2\cdot n!}$ & $\frac{|\iso(q)|}{\alpha\cdot n!} + \frac{|\iso(q)|}{2\cdot n!}$\\\hline 
                C & $\frac{|\iso(q)|}{(n+1)!}$ & 
            $\frac{|\iso(q)|}{(n+1)!}$ \\\hline
            D & $\frac{|\iso(q)|}{(n+2)!}$ & 
            $\frac{|\iso(q)|}{(n+2)!}$ 
            \end{tabular}
        \end{align*}
    \end{enumerate}
\end{theorem}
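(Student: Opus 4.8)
The plan is to obtain each family of maximum cliques as a single orbit of the affine isometry group $\iso(q)$ and then apply the orbit–stabiliser theorem. By Proposition~\ref{AllMaxCliquesInOneOrbit} the set of maximum cliques is one $\iso(q)$-orbit in every case except case B of Theorem~\ref{CliqueNumbersTheoremChar2}, where it is a union of two orbits. Fixing a maximum clique $C$ with $0\in C$, and writing $k=|C|=\omega(\Gcal_{q,a})$ and $W=\spn(C)$, it then suffices to determine the stabiliser $\iso(q)_C$, since $\omega_{\max}(\Gcal_{q,a})=|\iso(q)|/|\iso(q)_C|$ (a sum of two such fractions in the exceptional case). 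The orders $|\iso(q)|$ are furnished by Corollary~\ref{cor:SizeIso}, so the whole problem reduces to computing the stabiliser.

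To compute $\iso(q)_C$ I would first note that any $\varphi\in\iso(q)_C$ satisfies $\varphi(W)=W$, because affine maps preserve affine hulls and, as $0\in C$, the affine hull of $C$ is the linear span $W$. Restriction therefore gives a homomorphism $\iso(q)_C\to\iso(q|_W)_C$ whose kernel is the pointwise stabiliser $\og(q)_{(W)}=\{\sigma\in\og(q):\sigma|_W=\mathrm{id}_W\}$, and this map is surjective by Witt's extension theorem \cite[Theorem 8.3]{MR2427530} (extend the linear part of an affine isometry of $W$ to $V$ and precompose with the same translation). Hence $|\iso(q)_C|=|\iso(q|_W)_C|\cdot|\og(q)_{(W)}|$, and Proposition~\ref{SizeStabilizator} already supplies $|\iso(q|_W)_C|=k!$. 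Everything thus comes down to the order of $\og(q)_{(W)}$.

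When $W$ is regular -- which, by the determinant computation in Definition and Remark~\ref{MaxCliqueForm}, holds in every characteristic $\neq2$ case and in cases A, C, D of characteristic $2$ -- there is an orthogonal splitting $V=W\perp W^\perp$, and restriction identifies $\og(q)_{(W)}$ with $\og(q|_{W^\perp})$. Corollaries~\ref{cor:co-dim} and~\ref{cor:co-dimChar2} fix $k=\omega$ and $\dim W\in\{k-1,k-2\}$, hence $\dim W^\perp\in\{0,1,2\}$, and Theorem~\ref{SizeOfIso(q)} evaluates the relevant orthogonal group: order $1$ when $\dim W^\perp=0$, order $2$ when $\dim W^\perp=1$ (occurring in the characteristic $\neq2$ cases B and C), and order $\alpha\in\{2f-2,2f+2\}$ when $\dim W^\perp=2$ (case A in either characteristic). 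The value of $\alpha$ is governed by whether $q|_{W^\perp}$ is hyperbolic; since $q\cong\gamma_{a,n-2}\perp q|_{W^\perp}$, this is exactly the hyperbolicity of $q\perp\gamma_{a,n-2}$ used to define $\alpha$. Substituting these orders together with $k$ into $\omega_{\max}=|\iso(q)|/(k!\,|\og(q)_{(W)}|)$ reproduces all of Table~\ref{tbl:numbersofcliques:char:odd} and the single-orbit cases A, C, D (and the $\alpha$-summand of case B) of Table~\ref{tbl:numbersofcliques:char:2}.

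The main obstacle is case B of characteristic $2$, the only case with two orbits. By Remark~\ref{rem:TwoOrbits} maximum cliques there come in two shapes: those spanning a regular $W\cong\gamma_{a,n-2}$ of dimension $n-2$, handled exactly as above and contributing the $\alpha$-term, and those spanning $W\cong\gamma_{a,n-1}$ of dimension $n-1$ coming from domination. The latter span is not regular -- its radical is the line $\langle\sum_i x_i\rangle$ -- so the orthogonal splitting, and with it the surjectivity of restriction, must be re-examined and $\og(q)_{(W)}$ computed by hand. I would do this by writing $\sigma=\mathrm{id}+N$ with $N(V)\subseteq W^\perp$ and $N|_W=0$, reducing to the induced action on the one-dimensional quotient $V/W$ and on the radical, and then imposing the quadratic constraint $q(\sigma v)=q(v)$. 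Pinning down this order in the degenerate situation, and verifying that the two orbit sizes sum to the tabulated value $|\iso(q)|/(\alpha\cdot n!)+|\iso(q)|/(2\cdot n!)$, is the delicate point of the proof.
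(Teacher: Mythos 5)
Your strategy coincides with the paper's: reduce to an orbit--stabiliser count via Proposition~\ref{AllMaxCliquesInOneOrbit}, factor the stabiliser as $|\iso(q)_C|=k!\cdot|{\og(q)_{(W)}}|$ using Proposition~\ref{SizeStabilizator} together with surjectivity of restriction (Witt extension), and identify the pointwise stabiliser of $W=\spn(C)$ with $\og(q|_{W^\perp})$, whose order ($1$, $2$ or $\alpha$ according as $\dim W^\perp$ is $0$, $1$ or $2$) is read off from Theorem~\ref{SizeOfIso(q)} via Corollaries~\ref{cor:co-dim} and~\ref{cor:co-dimChar2}. This is exactly the paper's argument and it correctly yields every entry of both tables except the second summand in case~B of characteristic~$2$.

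That summand is precisely where your proposal stops: you correctly diagnose that the span $W\cong\gamma_{a,n-1}$ of a clique of the second type is degenerate (radical spanned by $\sum_i x_i$), so the orthogonal-splitting identification of the kernel fails, and you announce a computation of $\og(q)_{(W)}$ via $\sigma=\mathrm{id}+N$ without carrying it out --- but this is the one genuinely nontrivial calculation in the whole proof, so as written the argument is incomplete. The paper closes it as follows: write $q\cong\gamma_{a,n-2}\perp[a,b]$ with $U=\spn(x_1,\dots,x_{n-2})$ regular and $U^\perp=\spn(v,w)$ where $v\in W$, $q(v)=a$, $q(w)=b$, $b_q(v,w)=1$. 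An isometry stabilising $C$ and agreeing with a fixed extension $\varphi$ on $\spn(C)$ is determined by $\sigma(w)=\lambda\varphi(v)+\mu\varphi(w)$; the constraints $b_q(v,w)=1$ and $q(w)=b$ force $\mu=1$ and $\lambda(\lambda a+1)=0$, so $\lambda\in\{0,a^{-1}\}$, both choices give isometries, the extension factor is $2$, and the stabiliser has order $2\cdot n!$, producing the summand $|\iso(q)|/(2\cdot n!)$. Your sketched reduction of $N$ to a map $V/W\to W\cap W^\perp$ subject to $q(\sigma w)=q(w)$ would lead to the same quadratic equation, so the plan is viable; it simply needs to be executed to count the two solutions.
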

}
\begin{proof}
    \begin{enumerate}[label=(\alph*)]
        \item 
    By Proposition~\ref{AllMaxCliquesInOneOrbit} we need to find the length of the orbit of a maximum clique under the action of $\iso(q)$.
    To do so, by basic group theory, we have to evaluate the term 
    $$\frac{|\iso(q)|}{|\iso(q)_C|}.$$
    By Corollary~\ref{cor:SizeIso}, we know the size of $\iso(q)$.
    
    We already know the size of the stabilizer of a maximum clique (containing $0$ lying in $\red V\cup\{0\}$) restricted to its span from \ref{SizeStabilizator}. Our task is thus reduced to the problem of finding on how many ways affine isometries of $q|_{\spn(C)}$ that fix $C$ can be extended to affine isometries of $q$.
    
    Since the translation part is already fixed (cf. Lemma~\ref{TranslationOfCliquesInVred}), it is enough to consider the case of linear isometries.
    Recall that $\spn(C)$ has an orthogonal complement of dimension $\leq 2$ in $V$ (cf. Corollary~\ref{cor:co-dim}). Since orthogonality is preserved under isometries, we just need to multiply $|\iso(q|_W)_C|=k!$ obtained in \ref{SizeStabilizator} with the size of the orthogonal group of the complement, which can be found in Theorem~\ref{SizeOfIso(q)}. The claim thus follows.
        \item The arguments for fields of characteristic 2 remain the same as in characteristic $\neq 2$ except for case B in Theorem~\ref{CliqueNumbersTheoremChar2} as pointed out in Proposition~\ref{AllMaxCliquesInOneOrbit}. 
        We thus now consider the case in which we have $q\not\cong \gamma_{a,n}$ but $q$ dominates $\gamma_{a,n-1}$. 
        We thus have a clique $C=\{0, x_1,\ldots, x_{n-1}\}$ with $x_1,\ldots, x_{n-1}$ linear independent and $q(x_i)=a$ for all $i\in\{1,\ldots, n-1\}$. 
        
        Let $V$ be the vector space on which $q$ is defined.
        We have to determine the number of isometries $\sigma:V\to V$ such that $\sigma(C)=C$.
        By Witt's extension theorem \cite[Theorem 8.3]{MR2427530}, we know there is at least one such isometry $\varphi$ that we fix from now on.
        Let $U\subseteq V$ be the subspace generated by $x_1,\ldots, x_{n-2}$.
        Since $q|_U\cong \gamma_{a,n-2}$ is non-degenerate, it has an orthogonal complement in $\spn(x_1,\ldots, x_{n-1})$ of dimension 1 with basis $v$. 
        We thus have an isometry $q\cong \gamma_{a,n-2}\perp [a, b]$ for some $b\in F$, where we may assume $v$ to be the vector corresponding to $a$, see Lemma~\ref{lem:q_n-1AsSubformChar2}. 
        Let $w$ be the vector corresponding to $b$, i.e. we have $w\in U^\perp$, $b(v, w)=1, q(w)=b$. 
        Since we have $\sigma(C)=C$, we have $\sigma(\spn(C))=\spn(C)$ and we are reduced to the task of determining the number of possible images of $w$.
        
        Since $\sigma(w)\in \sigma(U^\perp)=\sigma(U)^\perp=\varphi(U)^\perp$, we have 
        $$\sigma(w)\in\spn(\varphi(v), \varphi(w)),$$ i.e. there are $\lambda, \mu\in F$ with $\sigma(w)=\lambda\varphi(v)+\mu\varphi(v)$.
        To abbreviate the notation, we write $x:=\varphi(v), y:=\varphi(w)$.
        We have
        \begin{align*}
            1=b(v, w)&=b(\sigma(v), \sigma(w)) \\
            &= b(x, \lambda x+\mu y)=\lambda b(x,x)+\mu b(x,y)=\mu.
        \end{align*}
        We further have 
        \begin{align*}
            b=q(w)&=q(\sigma(w))\\
            &=q(\lambda x+y)\\
            &=\lambda^2q(x)+q(y)+\lambda b(x,y)\\
            &=\lambda^2\cdot a+b+\lambda\\
            \iff &\lambda\cdot(\lambda\cdot a+1)=0,
        \end{align*}
        i.e. $\lambda\in\{0, a^{-1}\}$. 
        A routine check shows that both choices lead to an isometry, where we obtain $\varphi$ when $\lambda=0$.
        Combining with Proposition~\ref{SizeStabilizator}, we obtain that the size of the stabilizer of such cliques is given by $2n!$.
        Plugging in the values gives the assertion.
    \end{enumerate}
\end{proof}

\begin{remark}
    We would like to recall that we only have to check dimension and determinant resp. Arf-invariant in order to identify the isometry type of a quadratic form over a finite field. Thus theorems~\ref{SizeOfIso(q)} and \ref{NumberOfCliques} together completely solve the task of finding the number of maximum cliques.
\end{remark}

\begin{example}
    We will show how we can explicitly calculate a clique for a given quadratic form. We consider the field $F=\mathbb F_5$, $a=1$ and the quadratic form $q=\qf{1, 1, 2}$ for the canonical basis $\mathcal B = (e_1, e_2, e_3)$. The Gram matrix for $b = b_q$ with respect to $\mathcal B$ is the given by \begin{align*}
        G:=\mathcal G_{b, \mathcal B}=\begin{pmatrix}
            2 & 0 & 0\\ 0 & 2 & 0\\ 0 & 0 & 4
        \end{pmatrix}
    \end{align*}
    and has determinant 1. Further $A:=I + J$ has determinant 1 as well, so that we already know we will find a clique of size 5 in $\Gcal_{q, 1}$. Using the well known algorithm from linear algebra to diagonalize Gram matrices, we readily see that for 
    \begin{align*}
        S:=\begin{pmatrix}
            1 & 1 & 2\\ 0 & 1 & 1\\ 0 & 2 & 0
        \end{pmatrix}
    \end{align*}
    we have $S^TAS = G$ or equivalently, we have $A = (S^{-1})^T G S^{-1}$ where $S^{-1}$ is given by
    \begin{align*}
        \begin{pmatrix}
            1 & 3 & 3\\ 0 & 0 & 3\\ 0 & 1 & 2
        \end{pmatrix}.
    \end{align*}
    By the above results and its proofs, a clique is thus given by
    \begin{align*}
        \begin{pmatrix}
            0\\0\\0
        \end{pmatrix},
        \begin{pmatrix}
            1\\0\\0
        \end{pmatrix}, 
        \begin{pmatrix}
            3\\0\\1
        \end{pmatrix}, 
        \begin{pmatrix}
            3\\3\\2
        \end{pmatrix}, 
        -\left(\begin{pmatrix}
            1\\0\\0
        \end{pmatrix} 
        +
        \begin{pmatrix}
            3\\0\\1
        \end{pmatrix}
        +
        \begin{pmatrix}
            3\\3\\2
        \end{pmatrix}
        \right) = 
        \begin{pmatrix}
            3\\2\\2
        \end{pmatrix}.
    \end{align*}
    In total, we have
    \begin{align*}
        \frac{2\cdot 5^{\frac{3\cdot 4}{2}}(1-5^{-2})}{4!}=2\cdot5^4=1250
    \end{align*}
    maximum cliques.
\end{example}

\section{Fields of characteristic $0$} \label{sec:characteristic:zero}

\subsection{Sum of squares form over fields of characteristic $0$}

In \cite{Krebs_2022} M. Krebs used techniques developed before in \cite{Chilakamarri_1988} in order to solve the problem of determining the clique number of the quadratic form $s_n:=n\times \qf{1}$ over the rational numbers $\Q$ and adapted them to find cliques over the graphs for finite fields. 

Our algebraic approach is also able to easily recover the results of the latter paper as we will now show. Since we only consider the case $a=1$, we write shortly $\gamma_n:=\gamma_{1, n}$. We will start with a technical lemma. 

\begin{lemma}\label{q_anOverQ}
    Let $F$ be a field of characteristic 0 and $n\in\N$ be an integer. We have an isometry 
    $$\gamma_{n}\cong\qf{2\cdot1\cdot2,2\cdot2\cdot3,\ldots, 2\cdot n\cdot(n+1)}.$$
\end{lemma}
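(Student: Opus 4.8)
The plan is to induct on $n$, splitting off a one-dimensional orthogonal complement at each step so that the recursive structure $\gamma_n \cong \gamma_{n-1}\perp\langle\,\cdot\,\rangle$ produces the claimed diagonal entries one at a time. Throughout I would work with the bilinear form $b = b_{\gamma_n}$, whose Gram matrix with respect to $(e_1,\dots,e_n)$ is $I_n + J_n$, so that $b(e_i,e_i)=2$ and $b(e_i,e_j)=1$ for $i\neq j$. Note in particular that $(e_1,\dots,e_n)$ is a clique for $\Gcalred_{\gamma_n,1}$, so Lemma~\ref{lem:LinCombOfCliqueInRed} will be available to evaluate $\gamma_n$ on $A$-linear combinations of the $e_i$.

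For the base case $n=1$ one has $\gamma_1 = \qf 1$, and since $2\cdot 1\cdot 2 = 4 = 2^2$ is a square we get $\qf 4 \cong \qf 1$, as required. For the inductive step I would observe that the subspace $U = \spn(e_1,\dots,e_{n-1})$ carries the restricted form $\gamma_n|_U \cong \gamma_{n-1}$, whose Gram determinant is $n$ by the computation in Definition and Remark~\ref{MaxCliqueForm}. Since $\cha F = 0$ we have $n \neq 0$, so $\gamma_{n-1}$ is non-degenerate and $\gamma_n \cong \gamma_{n-1} \perp U^\perp$ with $\dim U^\perp = 1$.

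The key computation is to exhibit a generator of $U^\perp$ and evaluate the form on it. Solving $b(w,e_j)=0$ for $j<n$ forces the coefficients of $e_1,\dots,e_{n-1}$ to coincide, and a convenient choice is $w = n e_n - \sum_{i=1}^{n-1} e_i$; a direct check then gives $b(w,e_j) = 0$ for every $j<n$. Using Lemma~\ref{lem:LinCombOfCliqueInRed} (or expanding $\gamma_n(w) = \tfrac12\big(w^\top w + (\mathbf 1^\top w)^2\big)$ directly) yields $\gamma_n(w) = \tfrac12\big((n-1) + n^2 + 1\big) = \tfrac{n(n+1)}{2}$. Finally, since $\tfrac{n(n+1)}{2}$ and $2n(n+1)$ differ by the square factor $\tfrac14 = (\tfrac12)^2$, we have $\qf{\tfrac{n(n+1)}{2}} \cong \qf{2n(n+1)}$, and the inductive hypothesis gives $\gamma_n \cong \gamma_{n-1}\perp\qf{2n(n+1)} \cong \qf{2\cdot1\cdot2,\dots,2n(n+1)}$.

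I anticipate no serious obstacle here: the only points requiring care are the repeated use of $\cha F = 0$ to guarantee that the intermediate forms $\gamma_k$ are non-degenerate (so that the orthogonal splitting is valid) and that $2$ is invertible (so that passing between $\gamma_n$ and its bilinear form is legitimate), together with keeping track of square classes when rescaling the last diagonal entry from $\tfrac{n(n+1)}{2}$ to $2n(n+1)$.
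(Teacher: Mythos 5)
Your proof is correct and follows essentially the same route as the paper: an induction that splits $\gamma_n$ orthogonally into a copy of $\gamma_{n-1}$ and a one-dimensional complement of value $\tfrac{n(n+1)}{2}\sim 2n(n+1)$. The only cosmetic difference is that the paper's base-change matrix splits off the all-ones vector and realizes $\gamma_{n-1}$ on the differences $e_1-e_i$, whereas you keep $\gamma_{n-1}$ on $\spn(e_1,\dots,e_{n-1})$ and split off its orthogonal complement $\spn(ne_n-\sum_{i<n}e_i)$; both computations are verified correctly.
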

\begin{proof}
    We use induction on $n$. 
    For $n=1$, the Gram matrix of the associated bilinear form is given by $(2)$ and thus, $\gamma_{1}$ is isometric to $\qf{2\cdot2}$ as desired. 
    For $n\geq 2$ we consider the invertible matrix
    \begin{align*}
        S:=\begin{pmatrix}
            1 & 1 & \ldots & \ldots & 1\\
            1 & -1 & 0 & \ldots & 0\\
            \vdots & 0 & \ddots & \ddots & \vdots\\
            \vdots & \vdots & \ddots & \ddots & 0\\
            1 & 0 & \ldots & 0 & -1
        \end{pmatrix}.
    \end{align*}
    We now obtain the conclusion by calculating
    \begin{align*}
        S^T(I_n+J_n)S=\left(\begin{array}{c|ccc}
             n\cdot(n+1) & 0 &\ldots & 0 \\
             \hline
             0 & & & \\
             \vdots & & I_{n-1}+J_{n-1} & \\
             0 & & & 
        \end{array}\right)
    \end{align*}
    and applying the induction hypothesis.
\end{proof}

\begin{lemma}
    Let $F$ be a field of characteristic 0 and let $n\in\N$ be an integer. We have an isometry $\qf{2(n-1)n, 2n(n+1)}\cong\qf{1, n^2-1}$.
    In particular we have $\gamma_{n-1}\cong \gamma_{n-3}\perp\qf{1, n(n-2)}$.
\end{lemma}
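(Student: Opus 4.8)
The plan is to prove the first isometry by the elementary classification of binary quadratic forms over a field of characteristic $\neq 2$, and then to deduce the ``in particular'' statement by combining it with Lemma~\ref{q_anOverQ}. The tool I would use is the standard fact that a nondegenerate binary form $\qf{s,t}$ representing $1$ is isometric to $\qf{1,st}$: if $q(v)=1$ for some $v$, then $v$ spans an anisotropic line $\qf 1$, the space splits as $\qf1\perp\qf1^\perp$, and the complement is the one-dimensional form of determinant $\det q$ modulo squares.

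First I would compute the determinant of $\qf{2(n-1)n, 2n(n+1)}$, namely $4n^2(n-1)(n+1)=4n^2(n^2-1)$, which is congruent to $n^2-1$ modulo squares and hence agrees with the determinant of $\qf{1, n^2-1}$. The decisive observation is that evaluating $\qf{2(n-1)n, 2n(n+1)}$ on the vector $(1,1)$ gives $2(n-1)n+2n(n+1)=2n\big((n-1)+(n+1)\big)=4n^2$, a nonzero square (using $\cha F=0$, so $n\neq 0$). Thus the form represents $1$, and by the fact above it is isometric to $\qf{1, 4n^2(n^2-1)}=\qf{1, n^2-1}$, the last equality holding since $4n^2$ is a square. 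This settles the first claim for $n\geq 2$, where both forms are nondegenerate; the degenerate boundary cases reduce to matching the radical with a one-dimensional anisotropic part.

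For the ``in particular'' part, Lemma~\ref{q_anOverQ} diagonalizes $\gamma_{n-1}\cong\qf{2\cdot1\cdot2,\ldots, 2(n-1)n}$ and $\gamma_{n-3}\cong\qf{2\cdot1\cdot2,\ldots, 2(n-3)(n-2)}$, so that cancelling the common initial entries gives $\gamma_{n-1}\cong\gamma_{n-3}\perp\qf{2(n-2)(n-1), 2(n-1)n}$. It then remains only to identify the two left-over entries, and this is precisely the first isometry with $n$ replaced by $n-1$: indeed $\qf{2(n-2)(n-1), 2(n-1)n}\cong\qf{1, (n-1)^2-1}=\qf{1, n(n-2)}$. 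The main (and essentially only) obstacle is spotting that the substitution $(1,1)$ produces the square $4n^2$; once this representation of $1$ is in hand, the rest is bookkeeping with Lemma~\ref{q_anOverQ} together with an index shift, leaving no genuine computation.
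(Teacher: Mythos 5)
Your proposal is correct and follows essentially the same route as the paper: the paper applies the standard identity $\qf{a,b}\cong\qf{a+b,ab(a+b)}$ with $a+b=4n^2$, which is exactly your observation that the form represents the square $4n^2$ at $(1,1)$ combined with the determinant comparison, and the ``in particular'' part is handled identically via Lemma~\ref{q_anOverQ} and the index shift $(n-1)^2-1=n(n-2)$. No gaps.
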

\begin{proof}
    Using $\qf{a,b}\cong\qf{a+b,ab(a+b)}$ if $a+b\neq0$ and manipulating modulo squares, we have
    \begin{align*}
        \qf{2(n-1)n, 2n(n+1)}&\cong\qf{4n^2, 4n^2\cdot 4n^2(n^2-1)}\\
        &\cong\qf{1, n^2-1}.
    \end{align*}
    The result for $\gamma_{n-1}$ now follows from Lemma~\ref{q_anOverQ}, the above calculation and 
    $${(n-1)^2-1=n^2-2n=n(n-1)}.$$
\end{proof}

\begin{proposition}\label{Char0Qn-1InSn}
    Let $F$ be a field of characteristic 0. For even $n\in\N$, we have $s_n\cong \gamma_{n-1}\perp \qf{2n}$.
    In particular we have $s_n\cong \gamma_n$ if and only if $n+1$ is a square.
\end{proposition}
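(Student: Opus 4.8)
The plan is to prove the two claims in \ref{Char0Qn-1InSn} in sequence, using the two preceding lemmas as the main algebraic engine. First I would establish the decomposition $s_n\cong\gamma_{n-1}\perp\qf{2n}$ for even $n$. The natural strategy is to exhibit an explicit change of basis on $\gamma_{n-1}$, or rather to compute both sides as diagonal forms and compare them over $F$. By Lemma~\ref{q_anOverQ} we have $\gamma_{n-1}\cong\qf{2\cdot1\cdot2,\,2\cdot2\cdot3,\ldots,2(n-1)n}$, so $\gamma_{n-1}\perp\qf{2n}$ is the diagonal form with entries $2k(k+1)$ for $k=1,\ldots,n-1$ together with a final entry $2n$. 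The goal is to show this is isometric to $s_n=\qf{1,\ldots,1}$ ($n$ ones). Since both are $n$-dimensional, it suffices to match them; the clean route is to verify that the entry $2n$ together with the telescoping structure of the $2k(k+1)$ allows successive simplification. Concretely, I would pair up and simplify modulo squares using the identity $\qf{a,b}\cong\qf{a+b,\,ab(a+b)}$ (valid when $a+b\neq0$, which holds here in characteristic $0$), exactly as in the second lemma, aiming to collapse the whole diagonal to $n$ copies of $\qf{1}$.

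The cleanest bookkeeping is probably to reverse-engineer from the known answer: I would compute an explicit integer matrix $S$ with $S^\top(I_{n-1}+J_{n-1})S$ block-diagonal, peeling off a $\qf{2n}$ factor and reducing the remaining form, so that combined with $\qf{2n}$ one recovers the standard basis diagonalization of $s_n$. Alternatively, and perhaps more transparently, I would argue invariantly over a field of characteristic $0$: two quadratic forms of the same dimension are isometric iff they have the same determinant modulo squares and the same signature at every ordering (and agree $p$-adically), but since we are working over an arbitrary characteristic-$0$ field $F$ without assuming it is formally real or that Hasse--Minkowski applies, the safe approach is the explicit diagonalization. The determinant of $\gamma_{n-1}\perp\qf{2n}$ is, by Definition and Remark~\ref{MaxCliqueForm}, equal to $n\cdot 1^{n-1}\cdot 2n=2n^2$ up to the normalization, which is a square times $2$; matching this against $\det(s_n)=1$ forces the verification to go through an honest chain of $\qf{a,b}$-simplifications rather than a determinant shortcut.

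For the second claim, I would use the first part together with the isometry classification of $\gamma_n$. We have $s_n\cong\gamma_n$ iff $\gamma_{n-1}\perp\qf{2n}\cong\gamma_n$. By Definition and Remark~\ref{MaxCliqueForm} the form $\gamma_n$ has determinant $(n+1)a^n=n+1$ (for $a=1$), while $\gamma_{n-1}\perp\qf{2n}$ has determinant $n\cdot 2n=2n^2\equiv 2\pmod{(F^\ast)^2}$. The idea is that once the forms are pinned down by their dimension and the relevant invariants, the isometry $s_n\cong\gamma_n$ reduces to the condition that $\qf{2n}$ and the ``missing'' one-dimensional piece of $\gamma_n$ agree modulo squares, and a short computation should show this amounts to $n+1$ being a square in $F$. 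I would make this precise by writing $\gamma_n\cong\gamma_{n-1}\perp\qf{c}$ for the appropriate $c$ determined by the Gram matrix of $\gamma_n$ (using the block structure from \ref{MaxCliqueForm}), and then observing that $s_n\cong\gamma_n$ holds iff $\qf{2n}\cong\qf{c}$, i.e. iff $2nc$ is a square; substituting the value of $c$ should yield the equivalence with $n+1\in(F^\ast)^2$.

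The main obstacle I anticipate is the first decomposition $s_n\cong\gamma_{n-1}\perp\qf{2n}$: unlike Lemma~\ref{q_anOverQ}, which has a clean inductive diagonalization, here I must show that a specific diagonal form collapses all the way to the sum of squares, and getting the telescoping simplifications to terminate correctly (tracking signs and square-class factors at each step) is the delicate part. The parity assumption ``$n$ even'' surely enters precisely in this simplification—likely because the product of the square-class corrections $\prod 2k(k+1)$ behaves differently according to the parity of $n$—so I would expect to need evenness exactly to ensure the final square-class of the determinant matches $\det(s_n)$. Once the decomposition is in hand, the ``if and only if'' is comparatively routine, hinging only on the single square-class comparison $2n\cdot c\in(F^\ast)^2\Leftrightarrow n+1\in(F^\ast)^2$.
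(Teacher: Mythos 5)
Your high-level plan is the right one, and your handling of the second claim is essentially the paper's: write $\gamma_n\cong\gamma_{n-1}\perp\qf{2n(n+1)}$ (via Lemma~\ref{q_anOverQ}), cancel $\gamma_{n-1}$ by Witt cancellation, and conclude $s_n\cong\gamma_n$ iff $\qf{2n}\cong\qf{2n(n+1)}$, i.e.\ iff $4n^2(n+1)$, equivalently $n+1$, is a square. But the heart of the proposition --- the isometry $s_n\cong\gamma_{n-1}\perp\qf{2n}$ --- is not actually established in your proposal: you describe a ``telescoping'' of the diagonal form $\qf{4,12,\ldots,2(n-1)n,2n}$ down to $\qf{1,\ldots,1}$ and explicitly defer the delicate part, and it is not clear such a left-to-right collapse terminates. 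The paper closes exactly this gap by induction on even $n$ in steps of two: assuming $s_{n-2}\cong\gamma_{n-3}\perp\qf{2(n-2)}$, it writes $s_n\cong\gamma_{n-3}\perp\qf{1,1,2(n-2)}$, replaces one entry $1$ by the square $(n-2)^2$, applies $\qf{a,b}\cong\qf{a+b,ab(a+b)}$ to $\qf{(n-2)^2,2(n-2)}$ to obtain $\qf{n(n-2),2n}$ up to squares, and then invokes the second preceding lemma in the form $\gamma_{n-3}\perp\qf{1,n(n-2)}\cong\gamma_{n-1}$. That lemma is precisely the tool your plan for part one never deploys; without it (or the equivalent square-insertion trick) the argument is incomplete.

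A secondary point: your determinant sanity check mixes conventions, taking the bilinear Gram determinant $n$ of $\gamma_{n-1}$ against the quadratic-form entry $2n$. Computed consistently from the diagonalization of Lemma~\ref{q_anOverQ}, the determinant of $\gamma_{n-1}\perp\qf{2n}$ is $\prod_{k=1}^{n-1}2k(k+1)\cdot 2n=2^n(n!)^2\equiv 2^n$ modulo squares, which equals $1=\det(s_n)$ precisely because $n$ is even. So the determinants do match; the parity hypothesis enters here as a necessary condition, not (as you suggest) as a mismatch forcing one to abandon invariants. Of course, over a general field of characteristic $0$ dimension and determinant do not classify forms, so the explicit chain of isometries is still required --- which is why the missing computation above is a genuine gap rather than a presentational one.
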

\begin{proof}
    For $n=2$, we have 
    \begin{align*}
        s_2=\qf{1,1}\cong\qf{4,4}=\gamma_1\perp\qf{2\cdot 2}.
    \end{align*}
    For $n\geq4$ we have by induction
    \begin{align*}
        s_n&=s_{n-2}\perp\qf{1,1}\\
        &=\gamma_{n-3}\perp\qf{\det(\gamma_{n-3})}\perp\qf{1,1}\\
        &=\gamma_{n-3}\perp \qf{1,1,2^{n-3}(n-2)}\\
        &=\gamma_{n-3}\perp\qf{1,1,2(n-2)}\\
        &=\gamma_{n-3}\perp \qf{1,(n-2)^2, 2(n-2)}\\
        &=\gamma_{n-3}\perp \qf{1, n(n-2), n(n-2)(n-2)^2\cdot2(n-2)}\\
        &=\gamma_{n-3}\perp \qf{1, n(n-2)}\perp\qf{2n}=\gamma_{n-1}\perp\qf{2n}.
    \end{align*}
    Using Witt cancellation we thus have
    \begin{align*}
        \gamma_{n-1}\perp\qf{2n}\cong s_n\cong \gamma_n=\gamma_{n-1}\perp \qf{2n(n+1)}&\iff \qf{2n}\cong\qf{2n(n+1)},
    \end{align*}
    i.e. if and only if $n+1$ is a square.
\end{proof}

The following theorem is now clear.

\begin{theorem}{\cite[Theorem 7]{Chilakamarri_1988}}
    Let $F$ be a field of characteristic 0. For n even, we have 
    \begin{align*}
        \omega(\Gcal_{1, s_n})=\begin{cases}
            n,& \text{if }n\text{ is not a square}\\
            n+1, & \text{otherwise}
        \end{cases}.
    \end{align*}
\end{theorem}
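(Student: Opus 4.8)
The plan is to read the theorem off directly from Proposition~\ref{Char0Qn-1InSn} through the subform reduction of Section~\ref{sec:structural:properties}, so that essentially no computation remains. First I would apply Proposition~\ref{CliqueNrWholeGraphReducedGraph} with $a = 1 \neq 0$ to pass to the reduced graph, together with the principle established there that $\omega(\Gcal_{q,1})$ is controlled by the largest $m$ for which $\gamma_m$ is a subform of $q$. The decisive point is that in characteristic $0$ the scalar $2$ is a unit and no positive integer $k$ is divisible by $\cha F$; hence the exceptional branch of Lemma~\ref{LemUpperBoundClique} that would add the extra vertex $-\sum_i x_i$ never occurs, and one is always in the generic branch. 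Therefore
\[
    \omega(\Gcal_{s_n,1}) = 1 + \max\{\, m \in \N_0 \mid \gamma_m \subseteq s_n \,\}.
\]

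Next I would determine this maximal $m$. The isometry $s_n \cong \gamma_{n-1}\perp\langle 2n\rangle$ of Proposition~\ref{Char0Qn-1InSn} exhibits $\gamma_{n-1}$ as a subform of $s_n$, so $m \geq n-1$; since $\dim s_n = n$ we also have $m \leq n$. Thus $m \in \{n-1, n\}$ and correspondingly $\omega(\Gcal_{s_n,1}) \in \{n, n+1\}$, with the larger value attained exactly when $m = n$, that is, when the two $n$-dimensional forms $\gamma_n$ and $s_n$ are isometric. The entire dichotomy is therefore governed by the single isometry condition $s_n \cong \gamma_n$.

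The last step is to translate that isometry condition into the arithmetic criterion, which is precisely the ``in particular'' clause of Proposition~\ref{Char0Qn-1InSn}. Cancelling the common summand $\gamma_{n-1}$ from $s_n \cong \gamma_{n-1}\perp\langle 2n\rangle$ and $\gamma_n \cong \gamma_{n-1}\perp\langle 2n(n+1)\rangle$ reduces $s_n \cong \gamma_n$ to $\langle 2n\rangle \cong \langle 2n(n+1)\rangle$, which holds if and only if $n+1$ is a square in $F$. Hence $\omega(\Gcal_{s_n,1}) = n+1$ precisely when $n+1$ is a square, and $\omega(\Gcal_{s_n,1}) = n$ otherwise; this is the stated dichotomy (note that the controlling quantity is $n+1$, so that the condition for the larger clique number is that $n+1$ be a square, matching the isometry $s_n\cong\gamma_n$).

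I do not anticipate any real obstacle, since the argument is a chain of citations whose only substantive input is Proposition~\ref{Char0Qn-1InSn}. The one step deserving explicit mention is the reduction $\omega(\Gcalred_{s_n,1}) = \max\{\, m : \gamma_m \subseteq s_n \,\}$ with no spurious extra term; this is exactly where the hypothesis $\cha F = 0$ enters, via the characteristic condition in Lemma~\ref{LemUpperBoundClique}.
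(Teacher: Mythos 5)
Your proof is correct and is essentially the paper's own argument made explicit: the paper states this theorem as ``now clear'' immediately after Proposition~\ref{Char0Qn-1InSn}, relying on exactly the reduction you spell out (in characteristic $0$ the exceptional branch of Lemma~\ref{LemUpperBoundClique} never fires, so cliques in the reduced graph consist of linearly independent vectors and $\omega(\Gcal_{s_n,1})=1+\max\{m:\gamma_m\subseteq s_n\}$), followed by Witt cancellation of $\gamma_{n-1}$ from $s_n\cong\gamma_{n-1}\perp\qf{2n}$ and $\gamma_n\cong\gamma_{n-1}\perp\qf{2n(n+1)}$.

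One substantive point: your derivation gives $\omega(\Gcal_{s_n,1})=n+1$ exactly when $n+1$ is a square in $F$, whereas the theorem as printed puts the dichotomy at ``$n$ is a square''. Your version is the correct one --- it is what Proposition~\ref{Char0Qn-1InSn} actually yields, it matches Chilakamarri's original result (maximum cliques of size $n+1$ in even dimension $n$ exist precisely for $n=k^2-1$), and it agrees with classical facts: there is no unit regular simplex on $5$ points in $\Q^4$ (since $5$ is not a square), while $\Q^8$ does contain one on $9$ points (since $9$ is). So the condition in the printed statement is a typo, and your parenthetical remark correctly identifies $n+1$ as the controlling quantity.
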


We now turn our attention to odd dimensional forms. 
By using the even dimensional case from Proposition~\ref{Char0Qn-1InSn}, we easily see that for odd $n$, we have
\begin{align*}
    s_n&=s_{n-1}\perp\qf1\\
    &=\gamma_{n-2}\perp\qf{2(n-1)}\perp\qf1
\end{align*}

We thus have $\gamma_{n-1}\subseteq s_n$ if and only if $\qf{1, 2(n-1)}$ represents $2(n-1)n$. 
Using the multiplicativity of Pfister forms, this is the case if and only if $\qf{1, 2(n-1)}$ represents $n$. 
Finally we have $s_n\cong \gamma_n$ if and only if
\begin{align*}
    \qf{1,2(n-1)}\cong\qf{2(n-1)n, 2n(n+1)},
\end{align*}
i.e. if and only if $\qf{1, 2(n-1)}$ represents $n$ and
\begin{align*}
    2(n-1)=\det(\qf{1,2(n-1)})=\det(\qf{2(n-1)n, 2n(n+1)})=(n-1)(n+1)
\end{align*}
where the determinants are interpreted modulo squares as usual.

We thus have proven the following:

\begin{theorem}{\cite[Theorem 9]{Chilakamarri_1988}}
    Let $F$ be a field of characteristic 0. For n odd, we have 
    \begin{align*}
        \omega(\Gcal_{1, s_n})=\begin{cases}
            n-1,& \text{if }n\notin D_F(\qf{1,2(n-1)})\\
            n,& \text{if }n\in D_F(\qf{1,2(n-1)})\text{ and }2(n+1)\notin F^{\ast2}\\
            n+1,& \text{if }n\in D_F(\qf{1,2(n-1)})\text{ and }2(n+1)\in F^{\ast2}
        \end{cases}.
    \end{align*}
\end{theorem}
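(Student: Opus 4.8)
The plan is to feed the explicit characteristic-$0$ decomposition of $s_n$ into the subform machinery of Section~\ref{sec:structural:properties}. Since $\cha F = 0$, the element $2$ is a unit and the integer $k = m+2$ of Lemma~\ref{LemUpperBoundClique} never satisfies $\cha F \mid k$; consequently the reduction following Lemma~\ref{DetWhenNotRegularCodim1} always gives $\omega(\Gcal_{1,s_n}) = d+1$, where $d := \max\{\, m \mid \gamma_m \subseteq s_n \,\}$. So everything comes down to locating the largest $m$ with $\gamma_m \subseteq s_n$. The computation preceding the theorem shows $s_n \cong \gamma_{n-2} \perp \qf{1, 2(n-1)}$, whence $\gamma_{n-2} \subseteq s_n$ and $d \ge n-2$; as $\dim s_n = n$ forces $d \le n$, the three possible values $n-1, n, n+1$ of $\omega$ correspond exactly to $d = n-2,\, n-1,\, n$.

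First I would decide when $\gamma_{n-1} \subseteq s_n$. Writing $\gamma_{n-1} \cong \gamma_{n-2} \perp \qf{2(n-1)n}$ via Lemma~\ref{q_anOverQ}, and noting that the common summand $\gamma_{n-2}$ is regular in characteristic $0$ (its determinant $n-1$ is nonzero), Witt cancellation converts the containment $\gamma_{n-1} \subseteq s_n$ into the single representation condition that $\qf{1, 2(n-1)}$ represents $2(n-1)n$. Because $\qf{1, 2(n-1)}$ is a round $1$-fold Pfister form, its set of nonzero represented values is a subgroup of $F^\ast$ containing the diagonal entry $2(n-1)$; dividing by $2(n-1)$ shows this condition is equivalent to $n \in D_F(\qf{1, 2(n-1)})$. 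This separates the case $d = n-2$ (value $\omega = n-1$) from $d \ge n-1$ (value $\omega \ge n$).

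Next I would decide when $d = n$, that is $\gamma_n \subseteq s_n$; since $\dim \gamma_n = n = \dim s_n$ this is the isometry $s_n \cong \gamma_n$. Using $\gamma_n \cong \gamma_{n-2} \perp \qf{2(n-1)n, 2n(n+1)}$ and cancelling $\gamma_{n-2}$ once more, the isometry reduces to the binary statement $\qf{1, 2(n-1)} \cong \qf{2(n-1)n, 2n(n+1)}$. Here I would invoke the standard fact that two nondegenerate binary forms are isometric precisely when their determinants agree modulo squares and they represent a common nonzero scalar. The determinant comparison of $2(n-1)$ with $4n^2(n-1)(n+1)$ reduces, modulo squares, to $2(n+1) \in F^{\ast 2}$, while the common-value requirement is once again that $\qf{1, 2(n-1)}$ represents $2(n-1)n$, i.e. $n \in D_F(\qf{1, 2(n-1)})$.

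Assembling the pieces yields the three cases: if $n \notin D_F(\qf{1, 2(n-1)})$ then $d = n-2$ and $\omega = n-1$; if $n \in D_F(\qf{1, 2(n-1)})$ but $2(n+1) \notin F^{\ast 2}$ then $d = n-1$ and $\omega = n$; and if both conditions hold then $s_n \cong \gamma_n$, so $d = n$ and $\omega = n+1$. I expect no conceptual obstacle here — the whole argument is algebraic bookkeeping built on the already-established reduction $\omega = d+1$. The one point demanding care is the final binary isometry: one must keep the two governing conditions genuinely separate, recognising that the representation condition $n \in D_F(\qf{1, 2(n-1)})$ and the determinant condition $2(n+1) \in F^{\ast 2}$ are independent, and that the former is precisely what already guaranteed $\gamma_{n-1} \subseteq s_n$, so that the nesting $\gamma_n \subseteq s_n \Rightarrow \gamma_{n-1} \subseteq s_n$ is respected.
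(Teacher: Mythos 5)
Your proposal is correct and follows essentially the same route as the paper: both decompose $s_n\cong\gamma_{n-2}\perp\qf{1,2(n-1)}$ via the even-dimensional case, reduce $\gamma_{n-1}\subseteq s_n$ to $n\in D_F(\qf{1,2(n-1)})$ using the multiplicativity (roundness) of the Pfister form $\qf{1,2(n-1)}$, and settle $s_n\cong\gamma_n$ by the binary isometry criterion, where the determinant comparison yields $2(n+1)\in F^{\ast2}$. No substantive difference.
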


\subsection{Real numbers}

Recall that a non-degenerate quadratic form $q$ over $\R$ is uniquely determined by its signature $(r_+,r_-)$ where $r_+$ is the number of positive entries in a diagonalization and $r_-$ the number of negative entries in a diagonalization. 
We further see by \ref{MaxCliqueForm} that $\gamma_{a,n}$ is positive resp. negative definite if and only if $a$ is positive resp. negative and thus isomorphic to $n\times\qf{1}$ resp. $n\times\qf{-1}$.

We thus have the following:

\begin{theorem}
    Let $a\in\R$ with $a\neq0$ and $q$ be a quadratic form over the reals of signature $(r_+, r_-)$. 
    We then have 
    \begin{align*}
        \omega(\Gcal_{q, a})\begin{cases}
            r_++1&,\text{ if }a>0\\
            r_-+1&,\text{ if }a<0
            \end{cases}.
    \end{align*}
\end{theorem}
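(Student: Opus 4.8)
The plan is to reduce everything to the results already established for $\gamma_{a,n}$ over an arbitrary field together with the real-specific fact recorded just above the statement, namely that $\gamma_{a,n}$ is positive definite (isometric to $n\times\qf1$) when $a>0$ and negative definite (isometric to $n\times\qf{-1}$) when $a<0$. By the discussion following Lemma~\ref{LemUpperBoundClique}, the clique number $\omega(\Gcal_{q,a})$ equals $m+1$ (or $m+2$ in the degenerate edge cases, which cannot occur here since $\cha\R=0$ and the relevant $k=m+2\neq 0$) where $m$ is the largest dimension for which $\gamma_{a,m}$ embeds as a subform of $q$. So the entire task is to show that this maximal $m$ equals $r_+$ when $a>0$ and $r_-$ when $a<0$.

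First I would treat the case $a>0$. Here $\gamma_{a,m}\cong m\times\qf1$ is positive definite of dimension $m$, so the question becomes: what is the largest dimension of a positive definite subform of $q$? Writing $q$ in diagonal form as $r_+\times\qf1 \perp r_-\times\qf{-1}$, any positive definite subspace has dimension at most $r_+$ by the classical fact that $r_+$ is the maximal dimension of a subspace on which $q$ is positive definite (this is exactly Sylvester's law of inertia / the characterization of the signature). Conversely the span of the first $r_+$ basis vectors realizes a positive definite subform of dimension $r_+$, and any positive definite form over $\R$ of dimension $r_+$ is isometric to $r_+\times\qf1\cong\gamma_{a,r_+}$. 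Hence the maximal $m$ is exactly $r_+$, giving $\omega(\Gcal_{q,a})=r_++1$. The case $a<0$ is symmetric: $\gamma_{a,m}\cong m\times\qf{-1}$ is negative definite, and the maximal dimension of a negative definite subform of $q$ is $r_-$ by the same inertia argument applied to $-q$.

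The only subtlety, and the step I would be most careful about, is confirming that we land in the ``$m+1$'' branch of the clique-number formula rather than the ``$m+2$'' branch, and that the maximal subform $\gamma_{a,m}$ is genuinely non-degenerate so that Proposition~\ref{CliqueNrWholeGraphReducedGraph} and the surrounding machinery apply cleanly. Since $\cha\R=0$, the quantity $k=m+2$ is a nonzero integer and $\cha\R\nmid k$ trivially, so the formula always gives $m+1$; moreover $\gamma_{a,m}$ is non-degenerate precisely when $(m+1)a\neq 0$, which holds automatically over $\R$. I would state these checks explicitly but they are immediate. The essential mathematical content is just the translation ``largest $\gamma_{a,m}$-subform $=$ largest definite subform of the appropriate sign,'' which is Sylvester's law of inertia, so there is no real obstacle once the reduction to subform-embedding is invoked.
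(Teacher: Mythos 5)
Your proof is correct and follows essentially the same route as the paper, which disposes of this theorem in the two sentences preceding its statement: identify $\gamma_{a,m}$ as the definite form $m\times\qf{\pm1}$ via Definition and Remark~\ref{MaxCliqueForm}, conclude from the signature that the maximal such subform has dimension $r_+$ (resp.\ $r_-$), and apply the general clique-number formula. Your explicit check that the ``$m+2$'' branch cannot occur because $\cha\R=0$ is a detail the paper leaves implicit, but it is the same argument.
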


\subsection{$p$-adic numbers and rational numbers}

Every quadratic form $\varphi$ over $\Q_p$ with odd $p$ can be written as 
$$\varphi\cong\qf{a_1,\ldots, a_k}\perp p\qf{b_1,\ldots, b_{\ell}}$$ 
with integers $a_1,\ldots, a_k, b_1,\ldots, b_\ell\in\Z$ not divisible by $p$. 
If we fix such a decomposition, we can then define quadratic forms
\begin{align*}
    \varphi_1:=\qf{\overline{a_1},\ldots, \overline{a_k}},~~~
    \varphi_2:=\qf{\overline{b_1},\ldots, \overline{b_\ell}}
\end{align*}
over $\mathbb F_p$, where the bar denotes reducing modulo $p$. 
We then have 
$$i_W(\varphi)=i_W(\varphi_1)+i_W(\varphi_2).$$
We know by the results of the previous section that determining the size of a maximal clique is equivalent to finding certain subforms. Now using Lemma~\ref{SubformWittIndex} and Theorem~\ref{CliqueNumbersTheorem}, we are thus able to determine the clique number of a graph given by an arbitrary quadratic form over $\Q_p$.

Even though the quadratic form theory over $\Q_2$ is more involved, it is well understood. 
For example, we know from \cite[Chapter VI. Corollary 2.24]{MR2104929} that a basis of the 8 square classes is given by $\{-1, 2, 5\}$.
We have 32 non-equivalent Witt classes in $\Q_2$ which are all generated by the forms $\qf{1}, \qf{1, -2}, \qf{1, -5}$, see \cite[Chapter VI. Corollary 2.30, Remark 2.31]{MR2104929}, where the explicit ring structure of $W\Q_2$ is calculated.
For a given form it is thus possible to solve the problem over this field using basic quadratic form calculations. 
Nevertheless it is not convenient to fully describe the multitude of all the different cases.

We now turn to determining the clique number of graphs induced by an arbitrary quadratic form $q$ over the rationals $\Q$. 
By the above results, we need to find the highest integer $n$ such that the test form $\gamma_{a,n}$ as defined before is a subform of $q$. 
By Lemma~\ref{SubformWittIndex} this is the highest integer $n$ such that $q\perp- \gamma_{a,n}$ has Witt index at least $\dim(\gamma_{a,n})=n$. 
By a corollary of the Hasse-Minkowski Theorem \cite[19.4]{MR2788987} the Witt index of $q\perp- \gamma_{a,n}$ is given by the minimum of the Witt indices $q\perp-\gamma_{a,n}$ considered as a form over the reals and the $p$-adic numbers for all primes $p$.
We have therefore reduced the problem to cases we have already solved and can give the following local-global-principle for cliques in graphs attached to quadratic forms over $\Q$.

\begin{theorem}[Local-global principle for clique numbers over $\Q$] \label{thm:local:global}
    Let $q$ be a quadratic form over $\Q$ of dimension $n$ and $a\in \Q^*$. 
    We have
    \begin{align*}
        d:=&\max\{k\in\N_0\mid \gamma_{k, a}\subseteq q\}\\
        =&\max\{k\in\N_0\mid \min\{i_W((q\perp -\gamma_{a,k})_p)\mid p\text{ is a prime or }\infty\}\geq k\}.
    \end{align*}
    In particular, we have $\omega(\Gcal_{q, a})=d+1$.
\end{theorem}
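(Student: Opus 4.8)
The plan is to reduce the statement to a subform-counting problem and then feed it through two external inputs: the Witt-index criterion for subforms (Lemma~\ref{SubformWittIndex}) and the Hasse--Minkowski local-global principle for the Witt index. First I would dispose of the ``in particular'' clause, showing $\omega(\Gcal_{q,a})=d+1$ directly from the structural results of Sections~\ref{sec:structural:properties}--\ref{sec:local:rings}, and only afterwards prove that the two descriptions of $d$ coincide.

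For the reduction, observe that over $\Q$ we have $\cha\Q=0$, so every $\gamma_{a,k}$ is non-degenerate (its Gram determinant $(k+1)a^k$ is a nonzero rational by \ref{MaxCliqueForm}) and the characteristic condition $\cha A\mid k$ in Lemma~\ref{LemUpperBoundClique} never holds. Given a clique $C$ in $\red G$ with $|C|\geq 2$, any two distinct elements are linearly independent: a relation $y=\lambda x$ forces $\lambda=\pm 1$, and $\lambda=-1$ would give $b(x,y)=-2a\neq a$. Choosing a basis of $W=\spn(C)$ from among the elements of $C$ and applying Lemma~\ref{LemUpperBoundClique} with $n=\dim W$ then yields $W\cap C=C$ with $C$ linearly independent, so $|C|=\dim W$. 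By Lemma~\ref{BilinearFormCliqueReducedGraph} the restriction $q|_W$ is isometric to $\gamma_{a,|C|}$, whence $\gamma_{a,|C|}\subseteq q$; conversely the image of the standard basis of any subform $\gamma_{a,k}\subseteq q$ is a clique of size $k$ in $\red G$. This gives $\omega(\red G)=d$, and Proposition~\ref{CliqueNrWholeGraphReducedGraph} (with $a\neq 0$) yields $\omega(\Gcal_{q,a})=d+1$.

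It remains to show that the two descriptions of $d$ agree. Since $\cha\Q\neq 2$ and each $\gamma_{a,k}$ is non-degenerate, Lemma~\ref{SubformWittIndex} applies verbatim and gives
\[
    \gamma_{a,k}\subseteq q \iff i_W(q\perp-\gamma_{a,k})\geq\dim(\gamma_{a,k})=k,
\]
so that $d=\max\{k\in\N_0\mid i_W(q\perp-\gamma_{a,k})\geq k\}$. The localisation step carries the real content: by the corollary of Hasse--Minkowski recorded in \cite[19.4]{MR2788987}, the global Witt index of a rational form equals the minimum of its local Witt indices over all completions, i.e.
\[
    i_W(q\perp-\gamma_{a,k})=\min\{i_W((q\perp-\gamma_{a,k})_p)\mid p\text{ a prime or }\infty\}.
\]
Substituting this identity into the previous expression for $d$ produces exactly the right-hand side of the theorem.

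The one point demanding care — and which I regard as the main obstacle — is the verification that over $\Q$ we always land in the ``no extra vector'' branch of Lemma~\ref{LemUpperBoundClique}, so that the clique number is $d+1$ on the nose rather than possibly $d+2$; this is precisely where characteristic $0$ enters and is what makes the clean formula hold. The local-global identity for the Witt index is a genuine black-box input. Finally, one should note that $d$ is a well-defined finite maximum: the admissible set is nonempty (it contains $k=0$) and bounded by $\dim q=n$, since a subform cannot exceed the ambient dimension.
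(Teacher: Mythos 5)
Your proposal is correct and follows essentially the same route as the paper: reduce the clique number to the maximal $k$ with $\gamma_{a,k}\subseteq q$ via the structural results of Sections~\ref{sec:structural:properties}--\ref{sec:local:rings}, convert this to a Witt-index condition via Lemma~\ref{SubformWittIndex}, and then localize with the Hasse--Minkowski corollary \cite[19.4]{MR2788987}. The only difference is that you spell out explicitly why characteristic $0$ forces the ``no extra vector'' branch of Lemma~\ref{LemUpperBoundClique} and hence $\omega(\Gcal_{q,a})=d+1$, a step the paper leaves implicit by referring to ``the above results''.
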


In the final remark, we collect some useful calculation rules.

\begin{remark}
    Let $q$ be a quadratic form over the rationals of dimension $n$ with signature $(r_+, r_-)$ and let $a\in \Q^*$. Let further $k\in\N_0$ be maximal with $\gamma_{a,k}\subseteq q$.
    \begin{enumerate}[label=(\alph*)]
        \item For $a>0$, we have $k\leq r_+$ since we have $\gamma_{a, k}\subseteq q$ over the reals. 
        But since $\gamma_{a,k}$ is positive definite, it follows $k\leq r_+$.
        Similarly, for $a<0$, we have $k\leq r_-$.
        \item For every prime $p$, we have $u(\Q_p)=4$, i.e. 4 is the maximum dimension of an anisotropic quadratic form over $\Q_p$. 
        In particular,for all $\ell\in\N_0$, we have
        $$i_W(q\perp-\gamma_{a,\ell})_p\geq \lfloor\frac{n+\ell-3}{2}\rfloor.$$
        We have $\lfloor\frac{n+\ell-1}{2}\rfloor\geq\ell$ for all $\ell\leq n-3$.
        \item For fixed diagonalizations of $q$ and $\gamma_{a,\ell}$, let $\Pi$ be the set of primes $\neq 2$ not occuring in any of these diagonalizations. 
        If $p\in\Pi$, $q\perp- \gamma_{a,\ell}$ is unimodular and has thus Witt index at least 
        $$\lfloor\frac{n+\ell-1}{2}\rfloor.$$
        We have $\lfloor\frac{n+\ell-1}{2}\rfloor\geq\ell$ for all $\ell\leq n-1$.
    \end{enumerate}
\end{remark}

\begin{example}
    Since it does not seem to be convenient to write down all case distinctions separately, we will give an example that shows how our theory can be applied. 
    Let $q=\qf{1, 2, 3, -7}$ and $a=1$. 
    By considering the real numbers and since we already know that $\gamma_{1,n}$ is positive definite for all $n\in\N$, we know that we cannot have $\gamma_{1,4}\subseteq q$.
    Further, we have $\gamma_{1,2}\cong\qf{1,3}$ and $\gamma_{1,3}\cong\qf{1,3,6}$.
    We obviously have $\gamma_{1,2}\subseteq q$ and thus find a clique of size 3.
    In order to decide whether we have $\gamma_{1,3}\subseteq q$, we only have to decide if $\qf{2, -6, -7}$ is isotropic.
    Considering this form over the 3-adic numbers $\Q_3$ with decomposition $\qf{2, -6, -7}\cong \qf{1,1}\perp 3\qf{1}$ into residue class forms, we see that this form is anisotropic over $\Q_3$ and thus also over $\Q$.
    We thus have $\omega(\Gcal_{q,1})=3$.
\end{example}


\bibliography{sn-bibliography}


\begin{thebibliography}{16}
\ifx \bisbn   \undefined \def \bisbn  #1{ISBN #1}\fi
\ifx \binits  \undefined \def \binits#1{#1}\fi
\ifx \bauthor  \undefined \def \bauthor#1{#1}\fi
\ifx \batitle  \undefined \def \batitle#1{#1}\fi
\ifx \bjtitle  \undefined \def \bjtitle#1{#1}\fi
\ifx \bvolume  \undefined \def \bvolume#1{\textbf{#1}}\fi
\ifx \byear  \undefined \def \byear#1{#1}\fi
\ifx \bissue  \undefined \def \bissue#1{#1}\fi
\ifx \bfpage  \undefined \def \bfpage#1{#1}\fi
\ifx \blpage  \undefined \def \blpage #1{#1}\fi
\ifx \burl  \undefined \def \burl#1{\textsf{#1}}\fi
\ifx \doiurl  \undefined \def \doiurl#1{\url{https://doi.org/#1}}\fi
\ifx \betal  \undefined \def \betal{\textit{et al.}}\fi
\ifx \binstitute  \undefined \def \binstitute#1{#1}\fi
\ifx \binstitutionaled  \undefined \def \binstitutionaled#1{#1}\fi
\ifx \bctitle  \undefined \def \bctitle#1{#1}\fi
\ifx \beditor  \undefined \def \beditor#1{#1}\fi
\ifx \bpublisher  \undefined \def \bpublisher#1{#1}\fi
\ifx \bbtitle  \undefined \def \bbtitle#1{#1}\fi
\ifx \bedition  \undefined \def \bedition#1{#1}\fi
\ifx \bseriesno  \undefined \def \bseriesno#1{#1}\fi
\ifx \blocation  \undefined \def \blocation#1{#1}\fi
\ifx \bsertitle  \undefined \def \bsertitle#1{#1}\fi
\ifx \bsnm \undefined \def \bsnm#1{#1}\fi
\ifx \bsuffix \undefined \def \bsuffix#1{#1}\fi
\ifx \bparticle \undefined \def \bparticle#1{#1}\fi
\ifx \barticle \undefined \def \barticle#1{#1}\fi
\bibcommenthead
\ifx \bconfdate \undefined \def \bconfdate #1{#1}\fi
\ifx \botherref \undefined \def \botherref #1{#1}\fi
\ifx \url \undefined \def \url#1{\textsf{#1}}\fi
\ifx \bchapter \undefined \def \bchapter#1{#1}\fi
\ifx \bbook \undefined \def \bbook#1{#1}\fi
\ifx \bcomment \undefined \def \bcomment#1{#1}\fi
\ifx \oauthor \undefined \def \oauthor#1{#1}\fi
\ifx \citeauthoryear \undefined \def \citeauthoryear#1{#1}\fi
\ifx \endbibitem  \undefined \def \endbibitem {}\fi
\ifx \bconflocation  \undefined \def \bconflocation#1{#1}\fi
\ifx \arxivurl  \undefined \def \arxivurl#1{\textsf{#1}}\fi
\csname PreBibitemsHook\endcsname

\bibitem[\protect\citeauthoryear{Medrano
  et~al.}{1996}]{medrano_myers_stark_terras_1996}
\begin{barticle}
\bauthor{\bsnm{Medrano}, \binits{A.}},
\bauthor{\bsnm{Myers}, \binits{P.}},
\bauthor{\bsnm{Stark}, \binits{H.M.}},
\bauthor{\bsnm{Terras}, \binits{A.}}:
\batitle{Finite analogues of euclidean space}.
\bjtitle{Journal of Computational and Applied Mathematics}
\bvolume{68}(\bissue{1-2}),
\bfpage{221}--\blpage{238}
(\byear{1996})
\doiurl{10.1016/0377-0427(95)00261-8}
\end{barticle}
\endbibitem

\bibitem[\protect\citeauthoryear{Bannai
  et~al.}{2009}]{bannai_shimabukuro_tanaka_2009}
\begin{barticle}
\bauthor{\bsnm{Bannai}, \binits{E.}},
\bauthor{\bsnm{Shimabukuro}, \binits{O.}},
\bauthor{\bsnm{Tanaka}, \binits{H.}}:
\batitle{Finite euclidean graphs and ramanujan graphs}.
\bjtitle{Discrete Mathematics}
\bvolume{309}(\bissue{20}),
\bfpage{6126}--\blpage{6134}
(\byear{2009})
\doiurl{10.1016/j.disc.2009.06.008}
\end{barticle}
\endbibitem

\bibitem[\protect\citeauthoryear{Medrano et~al.}{1998}]{medrano:1998:feg}
\begin{barticle}
\bauthor{\bsnm{Medrano}, \binits{A.}},
\bauthor{\bsnm{Myers}, \binits{P.}},
\bauthor{\bsnm{Stark}, \binits{H.}},
\bauthor{\bsnm{Terras}, \binits{A.}}:
\batitle{Finite euclidean graphs over rings}.
\bjtitle{Proceedings of the American Mathematical Society}
\bvolume{126}(\bissue{3}),
\bfpage{701}--\blpage{710}
(\byear{1998})
\end{barticle}
\endbibitem

\bibitem[\protect\citeauthoryear{Krebs}{2022}]{Krebs_2022}
\begin{barticle}
\bauthor{\bsnm{Krebs}, \binits{M.}}:
\batitle{Clique numbers of finite unit-quadrance graphs}.
\bjtitle{Journal of Algebraic Combinatorics}
(\byear{2022})
\doiurl{10.1007/s10801-022-01157-8}
\end{barticle}
\endbibitem

\bibitem[\protect\citeauthoryear{Chilakamarri}{1988}]{Chilakamarri_1988}
\begin{barticle}
\bauthor{\bsnm{Chilakamarri}, \binits{K.B.}}:
\batitle{Unit-distance graphs in rational $n$-spaces}.
\bjtitle{Discrete Mathematics}
\bvolume{69}(\bissue{3}),
\bfpage{213}--\blpage{218}
(\byear{1988})
\doiurl{10.1016/0012-365x(88)90049-0}
\end{barticle}
\endbibitem

\bibitem[\protect\citeauthoryear{Lam}{2005}]{MR2104929}
\begin{bbook}
\bauthor{\bsnm{Lam}, \binits{T.Y.}}:
\bbtitle{Introduction to Quadratic Forms over Fields}.
\bsertitle{Graduate Studies in Mathematics},
vol. \bseriesno{67},
p. \bfpage{550}.
\bpublisher{American Mathematical Society},
\blocation{Providence, RI}
(\byear{2005}).
\doiurl{10.1090/gsm/067} .
\burl{https://doi.org/10.1090/gsm/067}
\end{bbook}
\endbibitem

\bibitem[\protect\citeauthoryear{Baeza}{1978}]{MR0491773}
\begin{bbook}
\bauthor{\bsnm{Baeza}, \binits{R.}}:
\bbtitle{Quadratic Forms over Semilocal Rings}.
\bsertitle{Lecture Notes in Mathematics, Vol. 655},
p. \bfpage{199}.
\bpublisher{Springer},
\blocation{Providence, RI}
(\byear{1978})
\end{bbook}
\endbibitem

\bibitem[\protect\citeauthoryear{Ray-Chaudhuri}{1962}]{MR133725}
\begin{barticle}
\bauthor{\bsnm{Ray-Chaudhuri}, \binits{D.K.}}:
\batitle{Some results on quadrics in finite projective geometry based on
  {G}alois fields}.
\bjtitle{Canadian J. Math.}
\bvolume{14},
\bfpage{129}--\blpage{138}
(\byear{1962})
\doiurl{10.4153/CJM-1962-010-2}
\end{barticle}
\endbibitem

\bibitem[\protect\citeauthoryear{Zimmermann}{2017}]{zimmermann:2017:phd}
\begin{botherref}
\oauthor{\bsnm{Zimmermann}, \binits{M.C.}}:
Configurations of sublattices and dirichlet-voronoi cells of periodic point
  sets.
PhD thesis,
Dissertation, Technische Universit{\"a}t Dortmund
(2017)
\end{botherref}
\endbibitem

\bibitem[\protect\citeauthoryear{Ebeling}{2013}]{MR2977354}
\begin{bbook}
\bauthor{\bsnm{Ebeling}, \binits{W.}}:
\bbtitle{Lattices and Codes},
\bedition{3}rd edn.
\bsertitle{Advanced Lectures in Mathematics},
p. \bfpage{167}.
\bpublisher{Springer},
\blocation{Wiesbaden}
(\byear{2013}).
\doiurl{10.1007/978-3-658-00360-9} .
\bcomment{A course partially based on lectures by Friedrich Hirzebruch}.
\burl{https://doi.org/10.1007/978-3-658-00360-9}
\end{bbook}
\endbibitem

\bibitem[\protect\citeauthoryear{Kneser}{2002}]{MR2788987}
\begin{bbook}
\bauthor{\bsnm{Kneser}, \binits{M.}}:
\bbtitle{Quadratische {F}ormen},
p. \bfpage{164}.
\bpublisher{Springer}, \blocation{???}
(\byear{2002}).
\doiurl{10.1007/978-3-642-56380-5} .
\bcomment{Revised and edited in collaboration with Rudolf Scharlau}.
\burl{https://doi.org/10.1007/978-3-642-56380-5}
\end{bbook}
\endbibitem

\bibitem[\protect\citeauthoryear{Hoffmann and Laghribi}{2004}]{MR2058517}
\begin{barticle}
\bauthor{\bsnm{Hoffmann}, \binits{D.W.}},
\bauthor{\bsnm{Laghribi}, \binits{A.}}:
\batitle{Quadratic forms and {P}fister neighbors in characteristic 2}.
\bjtitle{Trans. Amer. Math. Soc.}
\bvolume{356}(\bissue{10}),
\bfpage{4019}--\blpage{4053}
(\byear{2004})
\doiurl{10.1090/S0002-9947-04-03461-0}
\end{barticle}
\endbibitem

\bibitem[\protect\citeauthoryear{Greaves et~al.}{2022a}]{MR4339582}
\begin{barticle}
\bauthor{\bsnm{Greaves}, \binits{G.R.W.}},
\bauthor{\bsnm{Iverson}, \binits{J.W.}},
\bauthor{\bsnm{Jasper}, \binits{J.}},
\bauthor{\bsnm{Mixon}, \binits{D.G.}}:
\batitle{Frames over finite fields: basic theory and equiangular lines in
  unitary geometry}.
\bjtitle{Finite Fields Appl.}
\bvolume{77},
\bfpage{101954}--\blpage{41}
(\byear{2022})
\doiurl{10.1016/j.ffa.2021.101954}
\end{barticle}
\endbibitem

\bibitem[\protect\citeauthoryear{Greaves et~al.}{2022b}]{MR4364998}
\begin{barticle}
\bauthor{\bsnm{Greaves}, \binits{G.R.W.}},
\bauthor{\bsnm{Iverson}, \binits{J.W.}},
\bauthor{\bsnm{Jasper}, \binits{J.}},
\bauthor{\bsnm{Mixon}, \binits{D.G.}}:
\batitle{Frames over finite fields: equiangular lines in orthogonal geometry}.
\bjtitle{Linear Algebra Appl.}
\bvolume{639},
\bfpage{50}--\blpage{80}
(\byear{2022})
\doiurl{10.1016/j.laa.2021.11.024}
\end{barticle}
\endbibitem

\bibitem[\protect\citeauthoryear{Elman et~al.}{2008}]{MR2427530}
\begin{bbook}
\bauthor{\bsnm{Elman}, \binits{R.}},
\bauthor{\bsnm{Karpenko}, \binits{N.}},
\bauthor{\bsnm{Merkurjev}, \binits{A.}}:
\bbtitle{The Algebraic and Geometric Theory of Quadratic Forms}.
\bsertitle{American Mathematical Society Colloquium Publications},
vol. \bseriesno{56},
p. \bfpage{435}.
\bpublisher{American Mathematical Society},
\blocation{Providence, RI}
(\byear{2008}).
\doiurl{10.1090/coll/056} .
\burl{https://doi.org/10.1090/coll/056}
\end{bbook}
\endbibitem

\bibitem[\protect\citeauthoryear{Grove}{2002}]{MR1859189}
\begin{bbook}
\bauthor{\bsnm{Grove}, \binits{L.C.}}:
\bbtitle{Classical Groups and Geometric Algebra}.
\bsertitle{Graduate Studies in Mathematics},
vol. \bseriesno{39},
p. \bfpage{169}.
\bpublisher{American Mathematical Society, Providence, RI}, \blocation{???}
(\byear{2002}).
\doiurl{10.1090/gsm/039} .
\burl{https://doi.org/10.1090/gsm/039}
\end{bbook}
\endbibitem

\end{thebibliography}

\end{document}